\title[Weak capacity and modulus comparability]{Weak capacity and modulus comparability in Ahlfors regular metric spaces} 
\author{Jeff Lindquist}
\email{JLindquist@math.ucla.edu}
\address{Department of Mathematics, University of California, Los Angeles,  Box 95155, Los Angeles, CA, 90095-1555, USA}
\thanks{The author was partially supported by NSF grants DMS-1506099 and DMS-1162471.  This work is based on the author's forthcoming thesis}
\newtheorem{thm}{Theorem}[section]
\newtheorem{lemma}[thm]{Lemma}
\newtheorem{cor}[thm]{Corollary}
\theoremstyle{definition}
\newtheorem{defn}[thm]{Definition}
\newtheorem*{ack}{Acknowledgements}
\newtheorem{remark}[thm]{Remark}
\newtheorem{prop}[thm]{Proposition}
\newcommand{\N}{\mathbb{N}}
\newcommand{\R}{\mathbb{R}}
\newcommand{\C}{\mathbb{C}}
\newcommand{\mH}{\mathscr{H}}
\newcommand{\Hc}{\mathscr{H}^1_\infty}
\newcommand{\Hq}{\mathscr{H}^Q_\infty}
\newcommand{\norm}[1]{\left\lVert#1\right\rVert}
\DeclareMathOperator{\diam}{diam}
\DeclareMathOperator{\ARCdim}{ARCdim}
\DeclareMathOperator{\dist}{dist}
\DeclareMathOperator{\qcap}{wcap_\mathnormal{Q}}
\DeclareMathOperator{\capq}{wcap_\mathnormal{q}}
\DeclareMathOperator{\pcap}{wcap_\mathnormal{p}}
\DeclareMathOperator{\wcap}{wcap}
\DeclareMathOperator{\qmod}{mod_\mathnormal{Q}}
\DeclareMathOperator{\modp}{mod_\mathnormal{p}}
\DeclareMathOperator{\wcqcap}{wc-cap_\mathnormal{Q}}
\DeclareMathOperator{\wcpcap}{wc-cap_\mathnormal{p}}
\DeclareMathOperator{\dimh}{dim_{\mH}}
\def\Xint#1{\mathchoice
{\XXint\displaystyle\textstyle{#1}}%
{\XXint\textstyle\scriptstyle{#1}}%
{\XXint\scriptstyle\scriptscriptstyle{#1}}%
{\XXint\scriptscriptstyle\scriptscriptstyle{#1}}%
\!\int}
\def\XXint#1#2#3{{\setbox0=\hbox{$#1{#2#3}{\int}$ }
\vcenter{\hbox{$#2#3$ }}\kern-.6\wd0}}
\def\dashint{\Xint-}
\begin{document}
\maketitle


\begin{abstract}
Let $(Z,d,\mu)$ be a compact, connected, Ahlfors $Q$-regular metric space with $Q>1$. Using a hyperbolic filling of $Z$, we define the notions of the $p$-capacity between certain subsets of $Z$ and of the weak covering $p$-capacity of path families $\Gamma$ in $Z$.  We show comparability results and quasisymmetric invariance. As an application of our methods we deduce a result due to Tyson on the geometric quasiconformality of quasisymmetric maps between compact, connected Ahlfors $Q$-regular metric spaces.
\end{abstract}


\section{Introduction}

Modulus of path families has become an important tool in studying metric spaces with a rich supply of rectifiable paths.  The existence of sufficiently many rectifiable paths, however, is not guaranteed.  For instance, starting from a metric space $(X,d)$, one sees the ``snowflaked'' metric space $(X,d^\alpha)$ with $\alpha \in (0,1)$ carries no nonconstant rectifiable paths.  Accordingly, traditional modulus techniques are insufficient in many cases.

In this paper we will study metric measure spaces $(Z, d, \mu)$ which are compact, connected, and Ahlfors $Q$-regular with $Q>1$.  This means $(Z,d)$ is a separable metric space and $\mu$ is Borel regular.  The last condition is one on the volume growth of balls: specifically, a ball $B$ of radius $r$ has $\mu$-measure comparable to $r^Q$.

We develop two rough extensions of modulus to a ``hyperbolic filling'' associated with a given metric space.  A hyperbolic filling $X = (V,E)$ of $(Z,d,\mu)$ is a graph with vertices that correspond to metric balls and an edge structure which mirrors the combinatorial structure of our metric space.  For a useful picture to have in mind consider the unit disk model of the hyperbolic space $\mathbb{H}^2$.  Here the outer circle $S^1$ plays the role of our metric measure space $Z$ and the hyperbolic filling can be interpreted as a graph representing a Whitney cube decomposition of the interior.  In this setting, cubes correspond to vertices and are connected by edges if they intersect.  Hyperbolic fillings are Gromov hyperbolic metric spaces when endowed with the graph metric.  Moreover, our original space can be identified as the boundary at infinity $\partial_\infty X= Z$ following a standard construction found in \cite[Chapter 2]{BuS}.

Hyperbolic fillings are well defined up to a scaling parameter and a choice of a vertex set at each scale.  The extensions of modulus presented below are essentially quasi-isometrically invariant; changing the given hyperbolic filling will change the quantities by a controlled multiplicative amount.  This multiplicative ambiguity also appears in the modulus comparison results and hence no generality is lost by working with a fixed hyperbolic filling for each metric space.  The general construction of hyperbolic fillings follows \cite{BP} and \cite{BuS} and is detailed in Section \ref{Sec Hyp Fill} along with some of the useful properties of such fillings.

Generalizations of modulus are not new; in \cite{P1} and \cite{P2} Pansu develops a generalized modulus which is adapted in \cite{Ty}.  One key advantage of these generalized notions of modulus, as here, is that proving quasisymmetric invariance is relatively straightforward after setting up the appropriate definitions.

In our definitions we will need the notion of the weak $\ell^p$-norm of a function with a countable domain.  Let $X$ be a countable space and $f \colon X \to \C$.  We define $\norm{f}_{p,\infty}$ as the infimum of all constants $C>0$ such that
\begin{equation*}
\# \{x : |f(x)| > \lambda \} \leq \frac{C^p}{\lambda^p}
\end{equation*}
for all $\lambda > 0$.  We note that in general $\norm{f}_{p,\infty}$ is not a norm but for $p>1$ it is comparable to a norm (see \cite[Section 2]{BoS}).  We freely interchange the two and refer to $\norm{f}_{p,\infty}$ as the weak $\ell^p$-norm of $f$.  The use of the weak norm in the following definitions is motivated by \cite{BoS}.

We now define one of the two quantities used in this paper.  We work with a compact, connected, Ahlfors $Q$-regular metric measure space $(Z,d,\mu)$ with hyperbolic filling $X = (V,E)$.   Both quantities are defined in a similar manner as modulus: certain functions defined on the hyperbolic filling are admissible if they give enough length to an appropriate collection of paths. Then to define the quantity in question we infimize over the $p$-th power of the weak $\ell^p$-norm  of all admissible functions.

The first quantity, weak $p$-capacity ($\pcap$), is defined both for pairs of open sets with $\dist(A,B) > 0$ and for disjoint continua. A continuum is a compact, connected set that consists of more than one point.  We use the notation
\begin{equation*}
\dist(A,B) = \inf\{d(a,b):a\in A, b\in B\}
\end{equation*}
for the distance between $A$ and $B$.  The main idea is that instead of connecting two such sets by paths lying in $Z$, we look at the (necessarily infinite) paths connecting $A$ and $B$ in the hyperbolic filling.  More precisely, given $A,B \subseteq Z$ we call a function $\tau \colon E \to [0,\infty]$ admissible for $A$ and $B$ if for all infinite paths $\gamma \subseteq E$ with nontangential limits in $A$ and $B$ we have $\sum_{e \in \gamma}{\tau(e)}\geq 1$ (see Section \ref{Sec Hyp Fill} for boundaries at infinity of hyperbolic fillings and what it means for a path to have nontangential limits).  If $A$ and $B$ are understood we just call $\tau$ admissible.

\begin{defn}
Given open sets $A, B \subseteq Z$ with $\dist(A,B)>0$ or disjoint continua $A,B \subseteq Z$, we define the weak $p$-capacity $\pcap(A,B)$ between $A$ and $B$ as 
\begin{equation*}
\pcap(A,B) = \inf \{ \norm {\tau}_{p,\infty}^p : \tau \  \text{is admissible for $A$ and $B$} \}.
\end{equation*}
\end{defn}
Proposition \ref{positivity pcap} states that when this is defined for open sets, $\pcap(A,B)>0$.  It is also true that for fixed sets $A$ and $B$ we have $\pcap(A,B) \to 0$ as $p \to \infty$.

We remark again here that in the definition of $\pcap$ there is an implicit choice of a fixed hyperbolic filling and that by changing the hyperbolic filling we may change the value of $\pcap$ by a multiplicative constant.  That is, if $\wcap'_p$ is defined as $\pcap$ with a different hyperbolic filling, then there are constants $c,C > 0$ such that for all open $A$ and $B$ with $\dist(A,B) > 0$ and disjoint continua, one has
\begin{equation*}
c \pcap(A,B) \leq \wcap'_p(A,B) \leq C \pcap(A,B).
\end{equation*}
This follows from the proof of Theorem \ref{QS inv qcap} as hyperbolic fillings of the same metric space are quasi-isometric.  For this reason we ignore the dependence on the hyperbolic filling in the statements of the theorems below.

Our first main result shows that in general $\qcap$ is larger than the $Q$-modulus of the path family connecting $A$ and $B$ (denoted $\qmod(A,B)$; for the definition of $\qmod$ of a path family, see Section \ref{Sec Preliminaries}).

\begin{thm}
\label{qmod < qcap}
Let $Q>1$ and let $(Z,d,\mu)$ be a compact, connected Ahlfors $Q$-regular metric space.  Then there exists a constant $C>0$ depending only on $Q$ and the hyperbolic filling parameters with the following property: whenever $A,B \subseteq Z$ are either open sets with $\dist(A,B) > 0$ or disjoint continua,
\begin{equation*}
\qmod(A,B)  \leq C\qcap(A,B).
\end{equation*}
\end{thm}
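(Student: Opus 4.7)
The plan is to take any admissible function $\tau \colon E \to [0,\infty]$ for $\qcap(A,B)$ and build from it a Borel density $\rho \colon Z \to [0,\infty]$ that is admissible for the $Q$-modulus of the family of locally rectifiable curves joining $A$ to $B$, with $\int_Z \rho^Q \, d\mu \leq C \norm{\tau}_{Q,\infty}^Q$. Taking the infimum over admissible $\tau$ then yields the theorem.

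For each edge $e \in E$ joining vertices $v, w$ at adjacent scales of the hyperbolic filling, let $B_e \subset Z$ be the associated metric ball (say the larger of $B_v, B_w$), with radius $r_e$. Fixing a fattening constant $\lambda \geq 1$ determined by the filling parameters, the natural candidate is
\[
\rho(z) \,:=\, c_0 \sum_{e \in E} \tau(e) \, \frac{\chi_{\lambda B_e}(z)}{r_e},
\]
for some constant $c_0 > 0$. Admissibility should follow from a \emph{shadow path} construction: given a locally rectifiable $\gamma \subset Z$ from $a \in A$ to $b \in B$, concatenate two vertical rays in the filling (at each scale choose a vertex whose ball contains $a$, resp.\ $b$) with a horizontal bridge at a sufficiently fine scale, chosen so that the bridge's vertex balls cover $\gamma$ in sequence and hence are pairwise joined by edges. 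For each edge $e$ of the resulting bi-infinite path $P$, a geometric check should give $\ell(\gamma \cap \lambda B_e) \gtrsim r_e$: near the endpoints $\gamma$ must exit the shrinking balls around $a$ and $b$, and along the bridge the balls are forced to intersect $\gamma$ over a definite length. Summing and using admissibility of $\tau$ yields $\int_\gamma \rho \, ds \gtrsim c_0 \sum_{e \in P} \tau(e) \geq c_0$, so $\rho$ becomes admissible after absorbing the implicit constants into $c_0$.

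The main obstacle is proving $\int_Z \rho^Q \, d\mu \leq C \norm{\tau}_{Q,\infty}^Q$. A naive triangle inequality in $L^Q$ applied to the defining sum yields only a bound in terms of the strong norm $\norm{\tau}_{\ell^Q}$, which can be infinite for a weak $\ell^Q$ function, so the weak norm must be exploited more delicately. I would decompose $\tau$ dyadically by its values: letting $E_k = \{e \in E : 2^{k-1} < \tau(e) \leq 2^k\}$, the weak norm bound gives $\#E_k \leq C \norm{\tau}_{Q,\infty}^Q \, 2^{-kQ}$. Writing $\rho \leq \sum_k \rho_k$ with $\rho_k := 2^k c_0 \sum_{e \in E_k} \chi_{\lambda B_e}/r_e$, each $\|\rho_k\|_{L^Q}$ is to be controlled by the bounded-overlap property of the filling balls at each scale together with Ahlfors $Q$-regularity, $\mu(\lambda B_e) \lesssim r_e^Q$. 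The hope is that combining these ingredients with an interpolation or distribution-function argument produces a decay in $k$ whose geometric sum is bounded by $C \norm{\tau}_{Q,\infty}^Q$, uniformly across both large and small $k$; this is where I expect the proof to require the most care.
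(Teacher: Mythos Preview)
Your norm-bound step has a genuine gap: the density $\rho = c_0 \sum_{e \in E} \tau(e)\,\chi_{\lambda B_e}/r_e$, summed over \emph{all} scales, need not lie in $L^Q(Z)$ when $\tau$ is only in weak $\ell^Q$. Take a single geodesic ray in the filling toward a point $z_0 \in Z$, with one edge $e_n$ at each level $n \geq 1$; set $\tau(e_n) = n^{-1/Q}$ and $\tau = 0$ elsewhere. Then $\norm{\tau}_{Q,\infty}^Q \simeq 1$, but on the annulus $\lambda B_{e_m} \setminus \lambda B_{e_{m+1}}$ the finest term dominates and $\rho \simeq m^{-1/Q} s^{m}$, whence $\int_Z \rho^Q\,d\mu \simeq \sum_m m^{-1} = \infty$. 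Your dyadic decomposition does not rescue this: here each $E_k$ (for $k \leq 0$) consists of $\simeq 2^{-kQ}$ nested edges, and a direct computation gives $\norm{\rho_k}_Q^Q \simeq 1$ for every such $k$, with no decay in $k$, so no interpolation or distribution-function argument can make the sum over $k$ converge. Even more bluntly, the admissible choice $\tau(e) \simeq r_e$ --- which lies in weak $\ell^Q$ by Ahlfors $Q$-regularity --- already gives $\rho \equiv \infty$ pointwise, since every $z \in Z$ lies in boundedly many $\lambda B_e$ at each level but in infinitely many levels.

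The paper's proof never sums over scales. Setting $f(v) = \sum_{e \sim v}\tau(e)$, it uses the single-level density $u_n = \sum_{v \in V_n} f(v)\,r(B_v)^{-1}\chi_{2B_v}$, for which bounded overlap at a fixed level and Ahlfors regularity give $\norm{u_n}_Q^Q \lesssim \sum_{v \in V_n} f(v)^Q$. The crucial device, taken from \cite{BoS}, is a pigeonhole over levels: for any $N$ there is some $n \in [N,2N]$ with $\sum_{v \in V_n} f(v)^Q \lesssim \norm{f}_{Q,\infty}^Q$. This is precisely how the weak norm is exploited, and it is the idea your proposal is missing. Admissibility of $2u_n$ for large $n$ is then argued by contradiction: a rectifiable $\gamma$ with $\int_\gamma u_n$ small yields a horizontal edge-path at level $n$ of short $\tau$-length, which is joined to short vertical tails into $A$ and $B$ --- constructed for continua via a Hausdorff-content short-path lemma, and for open sets by first retreating to interior sets $A_\lambda$, $B_\lambda$ and then passing to the limit with Mazur's and Fuglede's lemmas --- contradicting the admissibility of $\tau$.
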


For a Loewner space (see Section \ref{Sec Preliminaries} or \cite[Chapter 8]{He} for the precise definition), $\qcap$ is comparable to this modulus.

\begin{thm}
\label{qcap < qmod}
Let $Q>1$ and let $(Z,d,\mu)$ be a compact, connected Ahlfors Q-regular metric space which is also a Q-Loewner space.  Then there exist constants $c,C>0$ depending only on $Q$ and the hyperbolic filling parameters with the following property: whenever $A,B \subseteq Z$ are either open sets with $\dist(A,B) > 0$ or disjoint continua,
\begin{equation*}
c \qmod(A,B) \leq \qcap(A,B) \leq C \qmod(A,B).
\end{equation*}
\end{thm}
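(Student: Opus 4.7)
The lower inequality $c\,\qmod(A,B)\le\qcap(A,B)$ follows immediately from Theorem~\ref{qmod < qcap}, so the content of the statement is the upper bound $\qcap(A,B)\le C\,\qmod(A,B)$, which is where the $Q$-Loewner hypothesis will enter. The plan is to take a near-optimal admissible density for $\qmod(A,B)$ and discretize it into an admissible edge-weight for $\qcap(A,B)$ whose weak $\ell^Q$-norm is controlled by the $L^Q$-norm of the original density.

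Concretely, fix $\rho\colon Z\to[0,\infty]$ admissible for the path family $\Gamma(A,B)$, and for each edge $e\in E$ incident to a vertex $v$ with associated ball $B_v$ of radius $r_v$, set
\[
\tau(e)\;=\;C_0\,r_v\,\dashint_{\lambda B_v}\rho\,d\mu,
\]
for an enlargement factor $\lambda>1$ and a normalization constant $C_0>0$ to be chosen, depending only on $Q$ and the hyperbolic filling parameters.

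To verify admissibility of $\tau$, let $\gamma=(v_0,v_1,\ldots)\subseteq E$ be an infinite path with nontangential limits $a\in A$, $b\in B$. The $Q$-Loewner hypothesis (together with Ahlfors regularity) implies quasiconvexity and the $(1,Q)$-Poincar\'e inequality on $Z$, so the consecutive ball centres $z_{v_k}$ can be joined by rectifiable arcs of length $\lesssim r_{v_k}$ inside $\lambda B_{v_k}$; together with the nontangentiality of the endpoint limits, this yields a rectifiable curve $\gamma^{*}\in\Gamma(A,B)$. A pencil-of-curves argument --- Fubini over the modulus-bounded family of curves through each successive pair of balls, guaranteed by the Loewner condition --- lets one select $\gamma^{*}$ so that $\int_{\gamma^{*}\cap\lambda B_{v_k}}\rho\,ds\lesssim r_{v_k}\dashint_{\lambda B_{v_k}}\rho\,d\mu$. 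Summing over $k$ and using $\int_{\gamma^{*}}\rho\,ds\ge 1$ gives $\sum_{e\in\gamma}\tau(e)\ge 1$ once $C_0$ is fixed large enough.

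For the norm estimate, H\"older's inequality together with Ahlfors regularity gives $\tau(e)^Q\lesssim\int_{\lambda B_v}\rho^Q\,d\mu$, and at each fixed level $n$ the bounded overlap of $\{\lambda B_v\}_{v\in V_n}$ produces a level-wise weak-type bound $\#\{e\text{ at level }n:\tau(e)>s\}\lesssim s^{-Q}\norm{\rho}_Q^Q$. The main obstacle, and the reason the weak $\ell^Q$-norm appears in the definition of $\qcap$, is that summing this bound naively across all scales diverges, since the enlarged balls cover $Z$ once per level. Closing the estimate requires exploiting the scale structure of the hyperbolic filling: for each threshold $s$ the constraint $\int_{\lambda B_v}\rho^Q\le\norm{\rho}_Q^Q$ limits the range of scales that can contribute, and the weak $\ell^Q$-norm absorbs the remaining logarithmic piece. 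Carrying this out, in the spirit of the weak-modulus framework of \cite{BoS}, is the technical heart of the argument.
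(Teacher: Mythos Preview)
Your overall strategy---discretize an admissible $\rho$ to an edge weight $\tau$ via ball averages---is the right one, but there is a genuine exponent mismatch that breaks the argument as stated. You set $\tau(e)\simeq r_v\dashint_{\lambda B_v}\rho$, the \emph{linear} average, and then claim a pencil-of-curves selection gives a rectifiable $\gamma^*$ with $\int_{\gamma^*\cap\lambda B_{v_k}}\rho\lesssim r_{v_k}\dashint_{\lambda B_{v_k}}\rho$. But that last inequality is essentially the $(1,1)$-Poincar\'e inequality, which a $Q$-Loewner, $Q$-regular space need not satisfy. What the Loewner modulus lower bound actually buys you between consecutive balls is a curve with $\int_\gamma\rho\lesssim\bigl(\int_{\lambda B_{v_k}}\rho^Q\bigr)^{1/Q}\simeq r_{v_k}\bigl(\dashint_{\lambda B_{v_k}}\rho^Q\bigr)^{1/Q}$, i.e.\ control by the $L^Q$-average, not the $L^1$-average. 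If you switch $\tau$ to the $L^Q$-average so that admissibility goes through, your norm estimate collapses: the maximal-function step then requires boundedness of $M$ on $L^{Q/Q}=L^1$, which fails. This tension is precisely what forces the paper to invoke the Keith--Zhong self-improvement: the $Q$-Loewner condition gives a $(1,Q)$-Poincar\'e inequality, which promotes to a $(1,p)$-Poincar\'e inequality for some $p<Q$, and one defines $\tau(e)=r(e_+)\bigl(\dashint_{Ke_+}\rho^p\bigr)^{1/p}+r(e_-)\bigl(\dashint_{Ke_-}\rho^p\bigr)^{1/p}$. With this exponent, admissibility follows from Poincar\'e and the norm estimate uses boundedness of $M$ on $L^{Q/p}$ with $Q/p>1$.

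Two further points. First, the paper does not build a curve $\gamma^*$ at all; it introduces the potential $u(z)=\min\bigl(1,\inf_{\gamma:A\to z}\int_\gamma\rho\bigr)$ and uses the Poincar\'e inequality to get $|u_{e_+}-u_{e_-}|\lesssim\tau(e)$, so admissibility comes from telescoping $\sum_\gamma|u_{e_+}-u_{e_-}|\ge|u_{B_v}-u_{B_w}|$ once one knows $u_{B_v}\to 0$ and $u_{B_w}\to 1$ along the nontangential ends. For open $A,B$ this is immediate; for continua it requires a separate Loewner-based lemma (Lemma~\ref{continuum near 0}), which your proposal does not address. Second, your final paragraph on the weak-$\ell^Q$ bound is not an argument: ``the weak norm absorbs the logarithmic piece'' is not how the estimate closes. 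The paper controls $\#\{v:\tau(v)>\alpha\}$ by writing it as $\int_Z\sum_{v\in V(\alpha)}|B_v|^{-1}\chi_{B_v}$, observing that along each vertical chain this sum is dominated by the term from the \emph{smallest} contributing ball, and then bounding that term pointwise by $\alpha^{-Q}M(\rho^p)^{Q/p}$. No logarithmic loss appears.
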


Hence $\qcap$ is a quantity that agrees with $\qmod$ up to a multiplicative constant, at least for path families connecting appropriate sets, on spaces with a large supply of rectifiable paths.  We also prove $\pcap$ satisfies a quasisymmetric invariance property.  Given a homeomorphism $\eta: [0,\infty) \to [0,\infty)$, a map $\varphi \colon Z \to W$ is called an $\eta$-quasisymmetry if whenever $z, z', z'' \in Z$ satisfy $|z-z'| \leq t |z - z''|$, we have $|\varphi(z) - \varphi(z')| \leq \eta(t)|\varphi(z) - \varphi(z'')|$ where we have used the notation $|\cdot - \cdot|$ to denote distance in the appropriate metric spaces. 


\begin{thm}
\label{QS inv qcap}
Let $Z$ and $W$ be compact, connected, Ahlfors regular metric spaces and let $p > 1$.  If $\varphi \colon Z \to W$ is an $\eta$-quasisymmetric homeomorphism, then there exist $c,C>0$ depending only on $\eta$ and the hyperbolic filling parameters with the following property: whenever $A,B \subseteq Z$ are either open sets with $\dist(A,B) > 0$ or disjoint continua,
\begin{equation*}
c\pcap(\varphi(A),\varphi(B)) \leq \pcap(A,B) \leq C\pcap(\varphi(A), \varphi(B)).
\end{equation*}
\end{thm}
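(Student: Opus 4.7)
The plan is to exploit the fact that any quasisymmetry $\varphi \colon Z \to W$ between compact, Ahlfors regular spaces induces a quasi-isometry $\Phi \colon X_Z \to X_W$ between their respective hyperbolic fillings, whose boundary extension under the identifications $\partial_\infty X_Z = Z$ and $\partial_\infty X_W = W$ is $\varphi$ itself. This is essentially the same mechanism that makes $\pcap$ well-defined up to multiplicative constants when the filling is changed, as noted in the remarks preceding the statement of the theorem. Given such a quasi-isometry, the quasi-invariance of $\pcap$ reduces to a combinatorial transport argument: one transfers admissible functions from one side to the other with controlled cost in weak $\ell^p$-norm.

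Concretely, fix a quasi-inverse $\Psi \colon X_W \to X_Z$ of $\Phi$. For each edge $e' = \{v_1', v_2'\} \in E_W$, the vertices $\Psi(v_1')$ and $\Psi(v_2')$ lie at a uniformly bounded graph distance $K$ in $X_Z$, so I can fix once and for all a path $\pi(e') \subseteq E_Z$ of length at most $K$ joining them. Given $\tau \colon E_Z \to [0,\infty]$ admissible for $(A,B)$, I define the pushforward
\begin{equation*}
\tau'(e') = \sum_{e \in \pi(e')} \tau(e), \qquad e' \in E_W.
\end{equation*}
To verify admissibility of $\tau'$ for $(\varphi(A), \varphi(B))$, take an infinite path $\gamma'$ in $X_W$ with nontangential limits in $\varphi(A)$ and $\varphi(B)$. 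The edge set $\gamma = \bigcup_{e' \in \gamma'} \pi(e')$ contains an infinite simple path in $X_Z$ with nontangential limits in $A$ and $B$ (since $\Psi$ extends to $\varphi^{-1}$ at infinity), so $\sum_{e \in \gamma} \tau(e) \geq 1$. Because each $e \in \gamma$ belongs to at least one $\pi(e')$ with $e' \in \gamma'$,
\begin{equation*}
\sum_{e' \in \gamma'} \tau'(e') \;=\; \sum_{e' \in \gamma'} \sum_{e \in \pi(e')} \tau(e) \;\geq\; \sum_{e \in \gamma} \tau(e) \;\geq\; 1.
\end{equation*}

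For the weak $\ell^p$-norm estimate, the bounded multiplicity coming from the quasi-isometry $\Psi$ and the bounded degree of the hyperbolic filling furnish a constant $N$ (depending only on $\eta$ and the filling parameters) such that each $e \in E_Z$ lies in at most $N$ of the paths $\pi(e')$. Since $\tau'(e') > \lambda$ forces $\tau(e) > \lambda/K$ for some $e \in \pi(e')$,
\begin{equation*}
\#\{e' \in E_W : \tau'(e') > \lambda\} \;\leq\; N \cdot \#\{e \in E_Z : \tau(e) > \lambda/K\} \;\leq\; \frac{N K^p \|\tau\|_{p,\infty}^p}{\lambda^p},
\end{equation*}
so $\|\tau'\|_{p,\infty}^p \leq N K^p \|\tau\|_{p,\infty}^p$. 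Taking the infimum over admissible $\tau$ yields $\pcap(\varphi(A), \varphi(B)) \leq C \pcap(A,B)$; the opposite inequality follows by applying the same construction to $\varphi^{-1}$, which is itself a quasisymmetry with distortion controlled by $\eta$.

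The main obstacle I anticipate is verifying cleanly that the construction interacts correctly with the nontangential limit condition: specifically, that the concatenated walk $\gamma$ produced from an admissible test path $\gamma'$ actually contains an infinite simple subpath with nontangential limits at the appropriate preimage points in $Z$. This should follow from the standard fact that a quasi-isometry between Gromov hyperbolic spaces carries cones at boundary points to cones up to bounded Hausdorff error, combined with the fact that $\Phi$ extends to $\varphi$ on the boundary; but the exact formulation will rely on the precise definition of nontangential convergence adopted in Section~\ref{Sec Hyp Fill}. Once this is in place, the rest is the relatively routine combinatorial bookkeeping sketched above.
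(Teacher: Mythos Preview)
Your proposal is correct and follows essentially the same approach as the paper: both push an admissible $\tau$ through the quasi-isometry between fillings by summing $\tau$ over a bounded set of edges in $X_Z$ associated to each edge $e'$ of $X_W$, then verify admissibility by concatenating short connectors and control the weak $\ell^p$-norm via bounded multiplicity. The only cosmetic differences are that the paper uses the explicit map $G$ of Lemma~\ref{QI induces QS} (built from $\varphi^{-1}$, with $\varphi^{-1}(B_v)\subseteq B_{G(v)}$) and sums $\tau$ over \emph{all} edges incident to vertices in the $D$-ball about $G(e'_\pm)$ rather than over one fixed path $\pi(e')$, and it invokes Lemma~\ref{BoS Lemma} for the norm estimate where you do the elementary level-set count directly; using the specific $G$ also makes the nontangential limit verification you flag as an obstacle essentially automatic, since the containment $\varphi^{-1}(B_v)\subseteq B_{G(v)}$ forces the image path to track the preimage boundary point at the correct scale.
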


We note here that the $p$ above need not match the Ahlfors regularity dimension of neither $Z$ nor $W$ and that $Z$ and $W$ might have different Ahlfors regularity dimensions.  The quasisymmetric invariance result relies on the fact that a quasisymmetry on compact, connected, metric measure spaces induces a quasi-isometry on corresponding hyperbolic fillings.  A map $F$ between two metric spaces $X$ and $Y$ is said to be a quasi-isometry if there are constants $c, C>0$ such that for all $x, x' \in X$, we have
\begin{equation}
\label{QI def eq}
\frac{1}{C} |x - x'| - c \leq |f(x) - f(x')| \leq C|x - x'| + c
\end{equation}
and there is a constant $D > 0$ such that for all $y \in Y$, there is an $x \in X$ such that $|f(x) - y| \leq D$.

We now define the second quantity: weak covering $p$-capacity ($\wcpcap$).  As before, there is a choice of hyperbolic filling required that introduces a multiplicative ambiguity but which poses no issues for the statements of the theorems.  Unlike $\pcap$, the quantity $\wcpcap$ is defined for all path families in a given metric space.

The vertices $V$ in our hyperbolic filling correspond to balls in $Z$: we let $B_v$ denote the ball corresponding to the vertex $v \in V$.  A subset $S \subseteq V$ is said to cover $Z$ if $Z \subseteq \cup_{v \in S}{B_v}$.   Let $\mathscr{S} = \{S_n\}$ where each $S_n\subseteq V$ is finite and covers $Z$.  We call such an $\mathscr{S}$ a sequence of covers.  We say $\mathscr{S}$ is expanding if for every finite $A \subseteq V$, we have $S_n \cap A = \emptyset$ for all large enough $n$.  

For a given subset $S \subseteq V$ that covers $Z$ and a path $\gamma \colon [0,1] \to Z$ we define a projection $P \colon [0,1] \to V$ onto $S$ as a partition $0=t_0 < t_1 < \dots < t_m = 1$ of $[0,1]$ and a sequence $v_1, \dots v_m \in S$ such that for all $k$, $\gamma([t_{k-1}, t_k]) \subseteq B_{v_k}$.

Given $\tau \colon V \to [0,\infty]$, we define the $\tau$-length of a projection $P$ of $\gamma$ on $S$ as $\ell_{\tau,P,S} (\gamma) = \sum_k \tau(v_k)$.

Now, let $\mathscr{S} = \{S_n\}$ be an expanding sequence of covers.  Given a rectifiable path $\gamma \colon [0,1] \to Z$, we say $\tau$ is admissible for $\gamma$ relative to $\mathscr{S}$ if 
\begin{equation*}
\liminf_{n \to \infty} ( \inf_{P} \ell_{\tau,P,S_n} (\gamma) ) \geq 1
\end{equation*}
where the infimum $\inf_P$ is over all projections of $\gamma$ onto $S_n$.
We say $\tau$ is admissible for $\gamma$ if $\tau$ is admissible relative to $\mathscr{S}$ for all such $\mathscr{S}$.  We remark that, for a given $\gamma$, changing the parameterization does not affect the subsequent projected $\tau$-length and we will frequently view rectifiable $\gamma$ as parameterized by arclength.

The main idea is to use subsets (the covers above) of vertices of the hyperbolic filling to approximate $Z$ in finer and finer resolution.  Path families now lie on the boundary $Z$ and are projected onto these covers in order to test admissibility of functions defined within.  By infimising over all projections onto a given cover and then letting the covers ``expand'' to become more and more like $Z$, we arrive at the rough length a given function defined on the filling gives a particular path.  Demanding admissibility for all rectifiable paths as with $\modp$ and using the weak norm as with $\pcap$ leads us to our definition.

\begin{defn}
Given a collection of paths $\Gamma$ in $Z$, we define the weak covering $p$-capacity $\wcpcap(\Gamma)$ of $\Gamma$ as
\begin{equation*}
\wcpcap(\Gamma) = \inf \{ \norm {\tau}_{p,\infty}^p : \tau \  \text{is admissible for all } \gamma \in \Gamma \}.
\end{equation*}
\end{defn}

With this quantity we have comparability even without the Loewner condition.

\begin{thm}
\label{wcqcap = qmod}
Let $Q>1$ and let $(Z,d,\mu)$ be a compact, connected Ahlfors $Q$-regular metric space.  Then there exist constants $c,C>0$ depending only on $Q$ and the hyperbolic filling parameters such that for all path families $\Gamma$,  
\begin{equation*}
c\qmod(\Gamma) \leq \wcqcap(\Gamma) \leq C\qmod(\Gamma).
\end{equation*}
\end{thm}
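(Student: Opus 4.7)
The plan is to establish the two inequalities separately by constructing, in each direction, a test function for one quantity from a test function for the other.

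For the upper bound $\wcqcap(\Gamma)\leq C\,\qmod(\Gamma)$, I would start with an arbitrary Borel $\rho\colon Z\to[0,\infty]$ admissible for $\qmod(\Gamma)$ with $\int_Z\rho^Q\,d\mu$ close to $\qmod(\Gamma)$, and build a candidate $\tau\colon V\to[0,\infty]$. After replacing $\rho$ by a suitable upper-semicontinuous variant, a natural choice is
\[
\tau(v)=\kappa\,r_v\sup_{\lambda B_v}\rho
\]
(or a scale-adapted maximal-function variant) with constants $\kappa,\lambda>0$ depending only on the hyperbolic-filling parameters. Admissibility of $\tau$ for any rectifiable $\gamma\in\Gamma$ is verified by noting that for any expanding sequence of covers $\{S_n\}$ and any projection onto $S_n$ with subarcs $\gamma([t_{k-1},t_k])\subseteq B_{v_k}$ of length at most $2r_{v_k}$, the sum $\sum_k r_{v_k}\sup_{B_{v_k}}\rho$ dominates $\int_\gamma\rho\,ds\geq 1$ up to the constant $\kappa$. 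For the weak $\ell^Q$ bound, I would use bounded overlap of $\{\lambda B_v:v\in V_n\}$ at each level $n$ together with Ahlfors $Q$-regularity to control the level-$n$ contribution by $\norm{\rho}_Q^Q$, and combine across levels using the weak-type $(Q,Q)$ inequality for the Hardy--Littlewood maximal operator on $Z$ together with the geometric summability coming from $r_v=a^{n(v)}$ with $a<1$.

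For the lower bound $c\,\qmod(\Gamma)\leq\wcqcap(\Gamma)$, I would start with $\tau\colon V\to[0,\infty]$ admissible for $\wcqcap(\Gamma)$ with $\norm{\tau}_{Q,\infty}^Q$ close to $\wcqcap(\Gamma)$, and define
\[
\rho_\tau(z)=\kappa'\sup_{v\in V:\,z\in 2B_v}\frac{\tau(v)}{r_v}.
\]
For admissibility of $\rho_\tau$, fix a rectifiable $\gamma\in\Gamma$ with $\diam(\gamma)>0$ and use the cover $V_n$ at a scale $r_n<\diam(\gamma)/4$: admissibility of $\tau$ relative to $\{V_n\}$ provides a projection with $\sum_k\tau(v_k)\geq 1-\varepsilon$, and connectivity of $\gamma$ forces the arclength of $\gamma$ inside $2B_{v_k}$ to be a definite multiple of $r_{v_k}$ whenever $\gamma$ is not entirely contained in $B_{v_k}$, which is true at this scale. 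Piecing these together yields $\int_\gamma\rho_\tau\,ds\gtrsim 1-\varepsilon$ after choosing $\kappa'$ appropriately. For the $L^Q$ bound, a layer-cake argument together with Ahlfors regularity and bounded overlap of $\{2B_v:v\in V_n\}$ at each scale converts $\int_Z\rho_\tau^Q\,d\mu$ into a distribution-function expression controlled by $\norm{\tau}_{Q,\infty}^Q$.

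The main technical obstacle is the weak $\ell^Q$ norm estimate in the upper-bound direction. A naive level-by-level estimate diverges because hyperbolic fillings have infinitely many scales with strong cross-scale overlap of balls; a strong $\ell^Q$ sum would be infinite even for $\rho$ of unit norm. The weak $\ell^Q$ formulation is exactly the right quantity because the distribution function of $\tau$ can be bounded in terms of that of $\rho$ through a combination of (i) the weak-type $(Q,Q)$ inequality for the Hardy--Littlewood maximal operator on the Ahlfors $Q$-regular space $Z$ and (ii) the geometric decay $\sum_n a^{nQ}<\infty$ from the filling parameter. Arranging these ingredients so that $Q$ appears as the critical exponent making the estimate close is the subtlest part of the argument.
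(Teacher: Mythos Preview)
Your lower-bound construction has a genuine gap in the $L^Q$ estimate. The function $\rho_\tau(z)=\kappa'\sup_{v:\,z\in 2B_v}\tau(v)/r_v$ takes a supremum over \emph{all} levels, and bounded overlap at each fixed level does not control this. Concretely: choose nested vertices $v_n$ (one per level $n$, all containing a fixed $z_0$), set $\tau(v_n)=n^{-1/Q}$ and $\tau=0$ elsewhere. Then $\#\{v:\tau(v)>\lambda\}\simeq\lambda^{-Q}$, so $\norm{\tau}_{Q,\infty}\simeq 1$; but for $z$ at distance $\simeq s^{-m}$ from $z_0$ one has $\rho_\tau(z)\simeq m^{-1/Q}s^{m}$, and integrating over the annuli gives $\int_Z\rho_\tau^Q\,d\mu\simeq\sum_m m^{-1}=\infty$. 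The paper avoids this by never combining scales into a single density: it forms level-$n$ functions $\sigma_n=2\sum_{v\in V_n}\frac{\tau(v)}{r(B_v)}\chi_{2B_v}$, uses the fact (from \cite{BoS}) that in every window $[N,2N]$ there is an $n$ with $\sum_{v\in V_n}\tau(v)^Q\lesssim\norm{\tau}_{Q,\infty}^Q$, and then extracts a limit via Mazur's lemma and Fuglede's lemma. The limiting $\rho$ is admissible only off an exceptional family of $Q$-modulus zero, which is then handled by adding a small multiple of a function blowing up along that family.

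Your upper-bound argument also has problems. First, the assertion that projection subarcs have arclength at most $2r_{v_k}$ is false: a rectifiable curve can wind inside $B_{v_k}$ with arbitrarily large arclength, and the definition of a projection imposes no length bound. Second, with $\tau(v)=\kappa r_v\sup_{\lambda B_v}\rho$ the weak $\ell^Q$ estimate fails outright once $\rho$ is unbounded (e.g.\ $\rho(z)=d(z,z_0)^{-\alpha}$ with $0<\alpha<1$ lies in $L^Q$, yet $\tau(v)=\infty$ for every $v$ whose enlarged ball contains $z_0$). The paper instead takes $\tau(v)=r(B_v)\dashint_{B_v}10\rho$; the norm bound then reduces to the strong $L^Q$ boundedness of the Hardy--Littlewood maximal function exactly as in the proof of Theorem~\ref{qcap < qmod}. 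For admissibility the paper first replaces $\rho$ by a \emph{lower}-semicontinuous minorant and then by a continuous $f\leq\rho$ with $\int_\gamma 10f\geq 7$, partitions $\gamma$ finely enough that $f$ is nearly constant on each piece, and compares $\int_\gamma f$ to $\sum_k r(B_{v_k})\dashint_{B_{v_k}}f$ via \emph{distances} $d(\gamma(y^i_{j-1}),\gamma(y^i_j))$ rather than arclengths: any chain of balls covering a subarc has total diameter at least the distance between its endpoints, so winding is harmless.
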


Similarly to $\pcap$, the quantity $\wcpcap$ has a quasisymmetric invariance property.
\begin{thm}
\label{QS inv wcqcap}
Let $Z$ and $W$ be compact, connected, Ahlfors regular metric spaces and let $p > 1$.  If $\varphi \colon Z \to W$ is an $\eta$-quasisymmetric homeomorphism, then there exist constants $c,C>0$ depending only on $p$, $\eta$, and the hyperbolic filling parameters such that for all path families $\Gamma$ in $Z$ we have 
\begin{equation*}
c\wcpcap(\Gamma) \leq \wcpcap(\varphi(\Gamma)) \leq C\wcpcap(\Gamma).
\end{equation*}
\end{thm}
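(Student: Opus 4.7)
The plan is to adapt the proof of Theorem~\ref{QS inv qcap} to the covering setting. By symmetry (the inverse of an $\eta$-quasisymmetry is $\eta'$-quasisymmetric for some $\eta'$), it suffices to prove the inequality $\wcpcap(\varphi(\Gamma)) \leq C \wcpcap(\Gamma)$. The central input, the same one used for Theorem~\ref{QS inv qcap}, is that $\varphi$ induces a quasi-isometry $F \colon X_Z \to X_W$ between hyperbolic fillings with constants depending only on $\eta$ and the filling parameters. I will choose $F$ so that there is a constant $N$ with the following property: for every $v \in V_Z$, the image $\varphi(B_v)$ is contained in the union $\bigcup_{w \in U_N(F(v))} B_w$, where $U_N(F(v))$ is the graph-metric $N$-neighborhood of $F(v)$ in $V_W$, and symmetrically for $\varphi^{-1}$.

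Given $\tau_Z \colon V_Z \to [0,\infty]$ admissible for $\Gamma$, define $\tau_W \colon V_W \to [0,\infty]$ by
\begin{equation*}
\tau_W(w) = \max\{\tau_Z(v) : v \in V_Z,\ F(v) \in U_N(w)\},
\end{equation*}
with the convention that $\tau_W(w) = 0$ when no such $v$ exists. Because $F$ is a quasi-isometry and hyperbolic fillings have bounded local geometry, the multiplicity of the assignment $w \mapsto \{v : F(v) \in U_N(w)\}$ is uniformly bounded. This bounded multiplicity shows that for each $\lambda > 0$ the superlevel set $\{w : \tau_W(w) > \lambda\}$ has cardinality at most a constant times that of $\{v : \tau_Z(v) > \lambda\}$, so $\norm{\tau_W}_{p,\infty}^p \leq C \norm{\tau_Z}_{p,\infty}^p$ with $C$ depending only on $p$, $\eta$, and the filling parameters.

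Next I verify admissibility of $\tau_W$ for $\varphi(\Gamma)$. Let $\gamma \in \Gamma$ and let $\mathscr{T} = \{T_n\}$ be an expanding sequence of covers of $W$. I build a companion sequence $\mathscr{S} = \{S_n\}$ of covers of $Z$ as follows: let $S_n$ consist of all $v \in V_Z$ with $F(v) \in U_N(w)$ for some $w \in T_n$, then prune (using the $N$-neighborhood property above applied to $\varphi^{-1}$) to a finite subcover of $Z$. That $\mathscr{S}$ is expanding follows from the quasi-isometry property of $F$, since a finite subset of $V_Z$ is carried by $F$ into a bounded subset of $V_W$, which is eventually disjoint from every $T_n$. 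Given any projection $P$ of $\varphi \circ \gamma$ onto $T_n$ with partition $0 = t_0 < \cdots < t_m = 1$ and vertices $w_1, \ldots, w_m \in T_n$ satisfying $\varphi(\gamma([t_{k-1}, t_k])) \subseteq B_{w_k}$, I lift to a projection of $\gamma$ onto $S_n$ by choosing for each $k$ a vertex $v_k \in S_n$ with $F(v_k) \in U_N(w_k)$ and $\gamma([t_{k-1}, t_k]) \subseteq B_{v_k}$; by the definition of $\tau_W$ we have $\tau_Z(v_k) \leq \tau_W(w_k)$, so the lifted $\tau_Z$-length is at most $\ell_{\tau_W, P, T_n}(\varphi \circ \gamma)$. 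Taking infimum over $P$ and liminf over $n$, admissibility of $\tau_Z$ relative to $\mathscr{S}$ yields admissibility of $\tau_W$ relative to $\mathscr{T}$, and since $\mathscr{T}$ was arbitrary, $\tau_W$ is admissible for $\varphi \circ \gamma$.

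The main obstacle is the third step: the induced map $F$ identifies balls in $Z$ with balls in $W$ only up to a bounded number of hyperbolic-filling levels, so turning a projection in one boundary into a projection in the other requires simultaneously enlarging balls (absorbed by the $N$-thickening in the definition of $\tau_W$) and pruning to maintain the cover property (absorbed by the symmetric $N$-thickening coming from $\varphi^{-1}$). Once the projection lift is executed, the norm estimate and admissibility combine to give $\wcpcap(\varphi(\Gamma)) \leq C \wcpcap(\Gamma)$, and the reverse inequality follows by the same argument applied to $\varphi^{-1}$.
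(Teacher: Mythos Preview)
Your strategy is sound, but the execution of the lifting step has a gap, and the paper's route avoids the complication entirely.

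The paper works with the quasi-isometry $G\colon Y\to X$ induced by $\varphi^{-1}$ as in Lemma~\ref{QI induces QS}, and the key point is that by its very construction one has the \emph{single-ball} containment $\varphi^{-1}(B_y)\subseteq B_{G(y)}$ for every vertex $y\in V_Y$. With this in hand the transferred function is simply $\sigma(y)=\tau(G(y))$: if $\{T_n\}$ is an expanding sequence of covers of $W$ then $\{G(T_n)\}$ is an expanding sequence of covers of $Z$ (immediately from the containment), and any projection $B_{y_1},\dots,B_{y_m}$ of $\varphi(\gamma)$ onto $T_n$ yields the projection $B_{G(y_1)},\dots,B_{G(y_m)}$ of $\gamma$ onto $G(T_n)$ with exactly the same partition of the parameter interval. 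Admissibility of $\sigma$ follows at once, with no thickening, no max, and no pruning; the norm bound is Lemma~\ref{BoS Lemma} with $J=\{(y,G(y))\}$.

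By contrast, you only postulate the union containment $\varphi(B_v)\subseteq\bigcup_{w\in U_N(F(v))}B_w$ (and symmetrically for $\varphi^{-1}$). With that weaker property your lifting sentence ``choose $v_k\in S_n$ with $F(v_k)\in U_N(w_k)$ and $\gamma([t_{k-1},t_k])\subseteq B_{v_k}$'' is not justified: from $\gamma([t_{k-1},t_k])\subseteq\varphi^{-1}(B_{w_k})\subseteq\bigcup_{v\in U_N(\cdot)}B_v$ you only get coverage by a \emph{union} of balls, not a single one, so in general you must subdivide $[t_{k-1},t_k]$ and pick up a bounded multiplicative constant you never account for. The pruning step is likewise unnecessary (the set $\{v:F(v)\in U_N(T_n)\}$ is already finite by bounded valence and already a cover, as one sees using $G$) and, if actually performed, could discard the very vertices you need for the lift. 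All of this is repairable---indeed, the single-ball containment you need is exactly the defining property of the map $G$ in Lemma~\ref{QI induces QS}---but once you invoke that, you have reduced to the paper's argument and the $N$-thickening, the $\max$, and the pruning all fall away.
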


This quasisymmetric invariance property implies a result due to Tyson \cite{Ty}.

\begin{cor}
Let $Z$ and $W$ be compact, connected, Ahlfors $Q$-regular metric spaces with $Q>1$ and let $\varphi \colon Z \to W$ be an $\eta$-quasisymmetric homeomorphism.  Then there exists constants $c,C>0$ depending only on $\eta$, $Q$, and the hyperbolic filling parameters such that
\begin{equation*}
c \qmod(\Gamma) \leq \qmod(\varphi(\Gamma)) \leq C \qmod(\Gamma)
\end{equation*}
for all path families $\Gamma$ in $Z$.
\end{cor}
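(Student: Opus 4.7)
The plan is to combine Theorem \ref{wcqcap = qmod} with Theorem \ref{QS inv wcqcap} in the case $p = Q$. Since both $Z$ and $W$ are assumed compact, connected, and Ahlfors $Q$-regular with the \emph{same} exponent $Q > 1$, Theorem \ref{wcqcap = qmod} applies to each of them individually with this common $Q$, yielding constants $c_1, C_1 > 0$ (depending only on $Q$ and the respective hyperbolic filling parameters) such that
\begin{equation*}
c_1 \qmod(\Gamma) \leq \wcqcap(\Gamma) \leq C_1 \qmod(\Gamma)
\end{equation*}
for all path families $\Gamma$ in $Z$, and analogously for all path families in $W$ applied to $\varphi(\Gamma)$.

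Next, since $\varphi$ is an $\eta$-quasisymmetric homeomorphism, Theorem \ref{QS inv wcqcap} with $p = Q$ provides constants $c_2, C_2 > 0$ (depending only on $Q$, $\eta$, and the hyperbolic filling parameters) such that
\begin{equation*}
c_2 \wcqcap(\Gamma) \leq \wcqcap(\varphi(\Gamma)) \leq C_2 \wcqcap(\Gamma)
\end{equation*}
for all path families $\Gamma$ in $Z$.

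Chaining these three comparabilities produces
\begin{equation*}
\qmod(\Gamma) \;\asymp\; \wcqcap(\Gamma) \;\asymp\; \wcqcap(\varphi(\Gamma)) \;\asymp\; \qmod(\varphi(\Gamma)),
\end{equation*}
with all multiplicative constants depending only on $Q$, $\eta$, and the hyperbolic filling parameters, which is exactly the statement of the corollary. No real obstacle arises: the work has all been absorbed into the two main theorems being invoked, and the corollary follows by composition of the estimates. The only subtlety worth double-checking is that $\varphi$ maps between spaces of the \emph{same} Ahlfors dimension, which is built into the hypothesis and so permits applying Theorem \ref{wcqcap = qmod} with the common exponent $Q$ on both sides; without matching dimensions, the identification of $\wcpcap$ with $\qmod$ would fail on the target side.
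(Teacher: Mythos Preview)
Your proof is correct and matches the paper's own argument exactly: the corollary is obtained by combining Theorem~\ref{wcqcap = qmod} (applied with the common exponent $Q$ on both $Z$ and $W$) with Theorem~\ref{QS inv wcqcap} at $p=Q$, just as you do.
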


Indeed, this follows immediately from combining Theorem \ref{wcqcap = qmod} and Theorem \ref{QS inv wcqcap} above.  Tyson \cite{Ty} shows this result for locally compact, connected, Ahlfors $Q$-regular metric spaces, but our framework with hyperbolic fillings is adapted to compact metric spaces.  Williams \cite{Wi} also derives this result; his Remark $4.3$ relates the conditions in the corollary above to his condition (III) which leads to the conclusion.

Lastly we outline the structure of the paper.  In Section \ref{Sec Preliminaries} we review some preliminaries including the definition of modulus and the statement of a weak $\ell^p$-norm comparison result. In Section \ref{Sec Hyp Fill} we construct hyperbolic fillings and prove some of their basic properties.  In particular, we sketch the proof that a quasisymmetry between boundary spaces induces a quasi-isometry between the corresponding hyperbolic fillings.  In Section \ref{Sec qcap} we prove the results related to $\pcap$, namely Theorems \ref{qmod < qcap}, \ref{qcap < qmod}, and \ref{QS inv qcap}.  In Section \ref{Cdmin} we relate $\pcap$ to Ahlfors regular conformal dimension.  In Section \ref{Sec wcqcap} we prove the results corresponding to $\wcpcap$, namely Theorems \ref{wcqcap = qmod} and \ref{QS inv wcqcap}.

\begin{ack}
The author thanks Mario Bonk for countless interesting discussions related to the contents of this paper; his input and guidance have been immensely valuable.  The author also thanks Peter Ha{\"i}ssinsky and Kyle Kinneberg for stimulating conversations on the subject matter.
\end{ack}


\section{Preliminaries}
\label{Sec Preliminaries}

First we define the notion of modulus of path families.  Let $(Z, d, \mu)$ be a metric measure space.  By a path in $Z$ we mean a continuous function $\gamma: I \to Z$ where $I \subseteq \R$ is an interval.  We will use $\gamma$ to refer to both the path and the image of the path.  For a path family $\Gamma$, we say a Borel function $\rho \colon Z \to [0,\infty]$ is admissible for $\Gamma$ if for all rectifiable $\gamma \in \Gamma$ we have
\begin{equation*}
\int_\gamma \rho \geq 1.
\end{equation*} 
Here and elsewhere path integrals are assumed to be with respect to the arclength parameterization.  We define the $p$-modulus of the path family $\Gamma$ to be
\begin{equation*}
\modp(\Gamma) = \inf \left\{ \int_Z \rho^p \right \}
\end{equation*}
where the infimum is taken over all $\rho$ admissible for $\Gamma$ and the integral is against the measure $\mu$.  In the following we will also suppress $d \mu$ from the notation.

One may think of $\modp$ as an outer measure on path families which is supported on rectifiable paths.  The quantity $\modp$ is meaningless in spaces without rectifiable paths such as the snowflaked metric spaces discussed in the introduction.  Nonetheless, where rectifiable paths abound $\modp$ is closely related to conformality and quasiconformality.  Indeed, a conformal diffeomorphism $f$ between two Riemannian manifolds $M$ and $N$ of dimension $n$ preserves the $n$-modulus of path families \cite[Theorem 7.10]{He}.  This invariance principle can be used to give an equivalent definition of quasiconformal homeomorphisms between appropriate spaces \cite[Definition 7.12]{He} which, while difficult to check, is quite strong.  Many basic properties of modulus are detailed in \cite[Chapter 7]{He}.

Often for clarity and brevity we will make use of the symbols $\lesssim$ and $\simeq$.  For two quantities $A$ and $B$, that may depend on some ambient parameters, we write $A \lesssim B$ to indicate that there is a constant $C>0$ depending only on these parameters such that $A \leq CB$. We also write $A \simeq B$ to indicate that there are constants $c, C>0$ depending only on these parameters such that $cB \leq A \leq CB$.  The exact dependencies of these constants will be clear from the context.  

For some results we will make use of \cite[Lemma 2.2]{BoS}.  For convenience we include the statement here.  

\begin{lemma}
\label{BoS Lemma}
Let $H$ and $K$ be countable sets.  Let $p > 1$ and $J \subseteq H \times K$ be such that all sets of the forms $J_h = \{k \in K : (h,k) \in J\}$ and $J^k = \{h \in H : (h,k) \in J\}$ have at most $N$ elements.  Then there is a constant $C(p,N)>0$ with the following weak $\ell^p$ bound property: if $s = \{s_h\}$ and $t = \{t_k\}$ are sets of real numbers indexed by $H$ and $K$ respectively and
\begin{equation}
\label{BoS Lemma sum condition}
|s_h| \leq \sum_{k \in J_h} |t_k|
\end{equation}
for all $h$, then $\norm{s}_{p,\infty} \leq C(p,N) \norm{t}_{p, \infty}$.  
\end{lemma}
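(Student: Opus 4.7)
My plan is to prove the lemma by a direct pigeonhole argument at the level of distribution functions, which is the most natural path since the weak $\ell^p$-norm is defined purely through level sets. I will bound $\#\{h : |s_h| > \lambda\}$ for each $\lambda > 0$ in terms of the analogous level-set size for $t$, and then read off the weak $\ell^p$-norm estimate.

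Fix $\lambda > 0$ and set $A = \norm{t}_{p,\infty}$, so that $\#\{k \in K : |t_k| > \mu\} \leq A^p/\mu^p$ for all $\mu > 0$. Suppose $h \in H$ satisfies $|s_h| > \lambda$. By the hypothesis $|s_h| \leq \sum_{k \in J_h} |t_k|$ and the bound $\#J_h \leq N$, the pigeonhole principle yields some $k \in J_h$ with $|t_k| > \lambda/N$. Thus
\begin{equation*}
\{h \in H : |s_h| > \lambda\} \subseteq \bigcup_{k \in K,\ |t_k| > \lambda/N} J^k.
\end{equation*}

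Using the second hypothesis $\#J^k \leq N$, I can count:
\begin{equation*}
\#\{h : |s_h| > \lambda\} \leq N \cdot \#\{k : |t_k| > \lambda/N\} \leq N \cdot \frac{A^p}{(\lambda/N)^p} = \frac{N^{p+1}\,A^p}{\lambda^p}.
\end{equation*}
Since $\lambda > 0$ was arbitrary, the definition of the weak $\ell^p$-norm gives $\norm{s}_{p,\infty} \leq N^{(p+1)/p}\norm{t}_{p,\infty}$, so one may take $C(p,N) = N^{(p+1)/p}$.

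There is essentially no main obstacle here: once one notices the pigeonhole step, the rest is bookkeeping. If one preferred to phrase the argument at the level of the weak $\ell^p$ quasi-norm itself rather than level sets, the hypothesis $p > 1$ would become relevant since $\norm{\cdot}_{p,\infty}$ is then comparable to a genuine norm and one can use a triangle-type inequality; but the distributional route above avoids this issue entirely and works for any $p > 0$, so the factor $p > 1$ in the statement is used only to interpret $\norm{\cdot}_{p,\infty}$ as a norm-equivalent quantity.
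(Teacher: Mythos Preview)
Your proof is correct. The paper does not actually give its own proof of this lemma: it is quoted verbatim from \cite[Lemma 2.2]{BoS} and stated without argument, so there is nothing in the paper to compare against. The pigeonhole-and-level-set argument you wrote is the standard proof and yields the explicit constant $C(p,N)=N^{(p+1)/p}$; as you note, it works for all $p>0$, so the hypothesis $p>1$ is not needed for the inequality itself but only for the ambient interpretation of $\norm{\cdot}_{p,\infty}$ as comparable to a norm elsewhere in the paper.
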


One useful notion for a metric space to have many rectifiable paths is the Loewner condition.  This relates the modulus of path families connecting nonintersecting continua, say $A$ and $B$, with their relative distance 
\begin{equation*}
\Delta(A,B) = \frac{\dist(A,B)}{\min\{\diam(A), \diam(B)\}}.
\end{equation*}
We let $\qmod(A,B)$ denote the modulus of the path family connecting $A$ to $B$. We say a metric measure space $(Z, d, \mu)$ is a $Q$-Loewner space if there is a decreasing function $\phi_Q \colon (0,\infty) \to (0,\infty)$ such that for all such $A$ and $B$ we have
\begin{equation*}
\phi_Q (\Delta(A,B)) \leq \qmod(A,B).
\end{equation*}
See  \cite[Chapter 8]{He} for this definition and more information on Loewner spaces.

The main intuition here is that the path family connecting two continua with positive relative distance is large enough to carry positive $Q$-modulus; that is, there are many rectifiable paths connecting the two sets.  This condition cannot be omitted from Theorem \ref{qcap < qmod}: by the quasisymmetric invariance principle for $\qcap$ (Theorem \ref{QS inv qcap}) we can snowflake a $Q$-Loewner space to construct spaces where disjoint open sets have positive $\qcap$ but the modulus of any path family is 0.

Given a function $u$ on a metric measure space $(Z,d,\mu)$, we say that a Borel function $\rho \colon Z \to [0,\infty]$ is an upper gradient for $u$ if whenever $z,z' \in Z$ and $\gamma$ is a rectifiable path connecting $z$ and $z'$, we have
\begin{equation*}
|u(z) - u(z')| \leq \int_\gamma \rho.
\end{equation*}
We use the notation that if $B = B(x,r)$, then $\lambda B = B(x, \lambda r)$.  For a given ball $B$ and a locally integrable function $u$ we set $u_B = \frac{1}{\mu(B)} \int_B u = \dashint_B{u}$; this is the average value of $u$ over the ball $B$.  We say $(Z, d, \mu)$ admits a $p$-Poincar\'e inequality if there are constants $C>0$ and $\lambda \geq 1$ such that
\begin{equation}
\label{Poincare Inequality}
\dashint_{B} |u - u_{B}| \leq C (\diam B)\left( \dashint_{\lambda B} \rho^p \right)^{1/p}
\end{equation}
for all open balls $B$ in $Z$, for every locally integrable function $u \colon Z \to \R$, and every upper gradient $\rho$ of $u$ in $Z$.   This definition and a subsequent discussion can be found in \cite[Chapter 8]{HKST}.  The $Q$-Poincar\'e inequality is a regularity condition on our space which will follow from the $Q$-Loewner space hypothesis present in some of the theorems.


We make use of Hausdorff content and Hausdorff measure.  Given an exponent $\alpha \geq 0$ and a set $E$, the $\alpha$-Hausdorff content of $E$ is defined as
\begin{equation*}
\mathscr{H}_\infty^\alpha(E) = \inf \{ \sum_{A \in \mathscr{A}} \diam(A)^\alpha : \mathscr{A} \text{ is a cover of } E  \text{ by balls } A\}.
\end{equation*}
The $\alpha$-Hausdorff measure of $E$ is defined as $\mathscr{H}^\alpha(E) = \lim_{\delta \to 0} \mathscr{H}_\delta^\alpha(E)$ where
\begin{equation*}
\mathscr{H}_\delta^\alpha(E) = \inf \{ \sum_{A \in \mathscr{A}} \diam(A)^\alpha : \mathscr{A} \text{ is a cover of } E \text{ by balls A with } \diam(A) < \delta \}.
\end{equation*}
The Hausdorff dimension of $E$ is defined as $\dimh(E) = \inf\{\alpha : \mathscr{H}^\alpha(E) = 0\}$.



\section{Hyperbolic fillings}
\label{Sec Hyp Fill}

Here we detail the construction of the hyperbolic fillings used to define $\qcap$ and $\wcqcap$.  The main idea is to construct a graph that encodes the combinatorial data of $Z$ in finer and finer detail.  As remarked in the introduction, it is useful to keep a Whitney cube decomposition of the unit disk model of the hyperbolic plane $\mathbb{H}^2$ in mind with the vertices corresponding to the Whitney cubes and with edges existing between intersecting cubes.

Recall we work in a compact, connected, Ahlfors $Q$-regular metric measure space $(Z,d,\mu)$.  Our construction of and proofs of properties involving the hyperbolic fillings of $Z$ follows \cite[Section 2.1]{BP} almost exactly.  There is a minor problem with their construction, however, which we fix by using doubled radii balls inspired a similar construction in \cite[Section 6.1]{BuS}.  For completeness we include the entire construction here.  

By rescaling, we may assume $\diam Z < 1$.  Let $s>1$ and, for each $k \in \N_0$, let $P_k \subseteq Z$ be an $s^{-k}$ separated set that is maximal relative to inclusion.  We call $s$ the parameter of the hyperbolic filling.  To each $p \in P_k$, we associate the ball $v = B(p, 2s^{-k})$ which we will use as our vertices in our graph.  We refer to $k$ as the level of $v$ and write this as $\ell(v)$.  We also write $V_k$ for the set of vertices with level $k$.  Note as $\diam(Z) < 1$, there is a unique vertex in $V_0$ which we will denote $O$.  We often write $v$ as $B_v$ or $B(v)$ where the level $k$ and the center $p$ are understood.  We will occasionally make an abuse of notation, however, and write $B(v,2s^{-k})$ referring to $v$ as both the vertex and the center of the ball.  We also use the notation $r(B)$ for the radius of a ball. 

We form a graph $X=(V,E)$ where the vertex set $V$ is the disjoint union of the $V_k$ and we connect two distinct $v,w \in V$ by an edge if and only if $|\ell(v) - \ell(w)| \leq 1$, and $\overline{B_v} \cap \overline{B_w} \neq \emptyset$.  We endow $X$ with the unique path metric in which each edge is isometric to an interval of length 1.    
    
For $v,w \in V$, we let $(v,w)$ denote the Gromov product given by
\begin{equation*}
(v,w) = \frac{1}{2}(|O - v| + |O - w| - |v - w|),
\end{equation*}
where $O$ is the unique vertex with $\ell(O) = 0$ and $|v - w|$ is the graph distance between $v$ and $w$.  Intuitively, $(v,w)$ is roughly the distance between $O$ and any geodesic connecting $v$ to $w$.
\begin{lemma}
\label{Gromov Comp}
For $v,w \in V$, we have $s^{-(v,w)} \simeq \diam(B_v \cup B_w)$.
\end{lemma}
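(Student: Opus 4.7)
The plan is to establish the two inequalities $\diam(B_v \cup B_w) \lesssim s^{-(v,w)}$ and $s^{-(v,w)} \lesssim \diam(B_v \cup B_w)$ by direct combinatorial constructions in $X$, after first reducing the Gromov product to a function of levels only. The preliminary step is to prove $|O - v| = \ell(v)$ for every $v \in V$: the lower bound is automatic from the level-change constraint on edges, and for the upper bound I would use maximality of each $P_k$ to pick, for $k = 0, 1, \dots, \ell(v)$, a point $p_v^{(k)} \in P_k$ with $d(p_v^{(k)}, p_v) \leq s^{-k}$. Then $d(p_v^{(k)}, p_v^{(k+1)}) \leq s^{-k} + s^{-(k+1)}$ is at most the sum of the radii of the balls $B_{v^{(k)}}, B_{v^{(k+1)}}$, so their closures meet and the vertices are joined by an edge; this yields a path from $O$ to $v$ of length $\ell(v)$. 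Consequently $(v,w) = \tfrac{1}{2}(\ell(v) + \ell(w) - |v-w|)$.

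For the upper bound $\diam(B_v \cup B_w) \lesssim s^{-(v,w)}$, I would fix a geodesic $v = u_0, u_1, \dots, u_n = w$ with $n = |v-w|$. Inserting intersection points of successive closed balls between consecutive vertices gives
\begin{equation*}
\diam(B_v \cup B_w) \leq \sum_{i=0}^n \diam(B_{u_i}) \leq 4 \sum_{i=0}^n s^{-\ell(u_i)}.
\end{equation*}
The graph triangle inequality applied from both endpoints, combined with $|O - u_i| = \ell(u_i)$, yields the key estimate $\ell(u_i) \geq \max\{\ell(v) - i,\, \ell(w) - (n-i)\}$, hence
\begin{equation*}
s^{-\ell(u_i)} \leq \min\bigl\{s^{\,i - \ell(v)},\; s^{\,(n-i) - \ell(w)}\bigr\}.
\end{equation*}
Splitting the sum at the crossover index $i^\star = (n + \ell(v) - \ell(w))/2$ produces two geometric series of ratio $1/s$, each with largest term $s^{-L}$ where $L = (v,w)$. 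Summing gives the desired bound.

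For the reverse inequality $s^{-(v,w)} \lesssim \diam(B_v \cup B_w)$, I would set $D = \diam(B_v \cup B_w)$ and let $k$ be the largest integer with $s^{-k} \geq 2D$, so $s^{-k} \simeq D$. By $P_k$-maximality I pick $p_u \in P_k$ with $d(p_u, p_v) \leq s^{-k}$. For any $x \in B_v \cup B_w$, since $p_v \in B_v \cup B_w$ we have $d(x, p_v) \leq D$, whence $d(x, p_u) \leq D + s^{-k} \leq \tfrac{3}{2} s^{-k} < 2s^{-k}$; so $B_u \supseteq B_v \cup B_w$. Next I build monotone ancestor chains from $v$ and from $w$ down to vertices $v^{(k)}, w^{(k)} \in V_k$ as in the preliminary step; the same maximality bound shows both $v^{(k)}$ and $w^{(k)}$ are adjacent (or equal) to $u$. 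Concatenating produces a path of length at most $\ell(v) + \ell(w) - 2k + 2$, giving $(v,w) \geq k - 1$ and hence $s^{-(v,w)} \leq s \cdot s^{-k} \lesssim D$.

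The one point of care is ensuring $0 \leq k \leq \min(\ell(v),\ell(w))$ so the ancestor chains are defined; this fails only when $D$ is comparable to $\diam Z$ or when $v, w$ lie very near $O$, and in those edge cases $(v,w)$ is bounded by a constant depending only on the filling parameters, so the inequality holds trivially after adjusting the multiplicative constant. I do not foresee a serious obstacle; the argument is essentially bookkeeping combining the ancestor-chain construction, the level triangle inequality $\ell(u_i) \geq \max\{\ell(v) - i,\, \ell(w) - (n-i)\}$, and two geometric series estimates.
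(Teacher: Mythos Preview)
Your proposal is correct and follows essentially the same route as the paper. For the bound $\diam(B_v\cup B_w)\lesssim s^{-(v,w)}$ both arguments sum the diameters along a geodesic $[vw]$ and split the resulting geometric series at the crossover index; for the reverse inequality both locate a common ancestor vertex at level $k\simeq -\log_s D$ containing $B_v\cup B_w$ and use it to bound $(v,w)$ from below. The only cosmetic difference is that you prove the auxiliary fact $|O-v|=\ell(v)$ explicitly and then bound $|v-w|$ via an explicit path $v\to v^{(k)}\to u\to w^{(k)}\to w$, whereas the paper phrases the same construction as ``choose geodesics $[Ov]$ and $[Ow]$ containing $x$'' and invokes $(v,w)\ge|O-x|$; your version is slightly more self-contained but otherwise identical in content.
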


\begin{proof}
We find $x \in V$ with $\diam(B_x) \simeq \diam(B_v \cup B_w)$ and $B_v, B_w \subseteq B_x$.  To do this, we note that there is a number $k \in \N$ with $s^{-k-1} \leq \diam(B_v \cup B_w) \leq s^{-k}$.  Then by construction there is a vertex $x$ on level $k$ such that $\frac{1}{2}B_x \cap B_v \neq \emptyset$; this follows from the choice of an $s^{-k}$ separated set for the centers of the balls corresponding to the vertices on level $k$.  As $\diam(B_v \cup B_w) \leq s^{-k}$, we see $B_v \cup B_w \subseteq B_x$.  Without the extra radius factor this inclusion need not be true; this is the minor oversight in \cite[Section 2.1]{BP}.  We choose geodesics $[Ov]$ and $[Ow]$ containing $x$.  We see $(v,w) \geq |O-x|$ as 
\begin{equation*}
\begin{split}
(v,w) - |O-x| &= \frac{1}{2}( |O-v| - |O-x| + |O-w| - |O-x| - |v-w|) \\
&= \frac{1}{2} (|v-x| + |w-x| - |v-w|)
\end{split}
\end{equation*}
which is nonnegative by the triangle inequality.  Thus,
\begin{equation*}
s^{-(v,w)} \leq s^{-|O-x|} = \frac{1}{2}r(B_x) \simeq \diam(B_v \cup B_w).
\end{equation*}
For the other direction, we follow the notation in \cite[Lemma 2.2]{BP} and set $|v-w|=\ell, |O-v|=m,$ and $|O-w|=n$. Let $[vw]$ be a geodesic segment, which is formed from a sequence of balls $B_k$ for $k \in {0,\dots,\ell}$ with $B_0 = B_v$ and 
\begin{equation*}
s^{-1}r(B_i) \leq r(B_{i+1}) \leq s r(B_i)
\end{equation*}
 for all $i$.  Hence, for $k \in {0,\dots,\ell}$, we have
 \begin{equation*}
 \begin{split}
 \diam(B_v \cup B_w) &\leq \sum_{i=0}^\ell \diam(B_i) \\
 &= \sum_{i=0}^k \diam(B_i) + \sum_{j=0}^{\ell - k - 1} \diam(B_{\ell - j})\\
 &\leq 4 \sum_{i=0}^k s^{-m+i} + 4 \sum_{j=0}^{\ell - k - 1} s^{-n+j}\\
 &= \frac{4}{s-1} (s^{-m+k+1} - s^{-m} + s^{-n+\ell-k} - s^{-n} ) \\
 &\leq \frac{4s}{s-1}(s^{-m+k} + s^{-n+\ell-k}).
 \end{split}
 \end{equation*}
Setting $k = \frac{1}{2}(\ell+m-n)$ (or $k = \frac{1}{2}(\ell+m-n + 1)$ for a comparable bound if this is not an integer), this becomes
\begin{equation*}
\diam(B_v \cup B_w) \leq \frac{8s}{s-1} s^{\frac{1}{2} (\ell - m - n)} = \frac{8s}{s-1} s^{-(v,w)}.
\qedhere
\end{equation*} 
\end{proof}

The following lemma involves Gromov hyperbolic metric spaces.  For definitions we refer the reader to \cite[Section 2.1]{BuS}.

\begin{lemma}
$X$ equipped with the graph metric is a Gromov hyperbolic space.
\end{lemma}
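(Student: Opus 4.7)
The plan is to verify Gromov's four-point condition at the basepoint $O$, which suffices because the four-point condition at a single basepoint implies hyperbolicity at every basepoint with a possibly doubled constant. Concretely, I will show there exists $\delta \geq 0$ such that for every $u,v,w \in V$,
\begin{equation*}
(v,w) \geq \min\{(v,u),(u,w)\} - \delta,
\end{equation*}
where Gromov products are taken at $O$.

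The entire argument is carried out through the boundary dictionary provided by Lemma \ref{Gromov Comp}, which says $s^{-(v,w)} \simeq \diam(B_v \cup B_w)$ with multiplicative constants depending only on $s$. First I would translate the desired four-point inequality into a statement about the quantity $D(v,w) := \diam(B_v \cup B_w)$: applying $-\log_s$ and absorbing the comparability constants, the four-point condition at $O$ becomes equivalent to the existence of a constant $K = K(s)$ such that
\begin{equation*}
D(v,w) \leq K \max\{D(v,u), D(u,w)\}
\end{equation*}
for all $u,v,w \in V$. This is the essential ``quasi-ultrametric'' form of hyperbolicity on the boundary side.

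Next I would establish this quasi-ultrametric inequality directly. Given $u,v,w \in V$ and arbitrary points $a \in B_v$, $b \in B_w$, pick any fixed $c \in B_u$ and apply the triangle inequality in $Z$:
\begin{equation*}
d(a,b) \leq d(a,c) + d(c,b) \leq D(v,u) + D(u,w) \leq 2\max\{D(v,u), D(u,w)\}.
\end{equation*}
Taking the supremum over $a,b$ yields $D(v,w) \leq 2\max\{D(v,u), D(u,w)\}$, which is exactly the inequality needed (with $K=2$).

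Finally I would pull this back through Lemma \ref{Gromov Comp} to conclude the four-point condition at $O$ with an explicit $\delta$ depending only on $s$ and the constants in Lemma \ref{Gromov Comp}. Since $X$ is a geodesic (indeed a length) space, Gromov hyperbolicity at one basepoint upgrades to Gromov hyperbolicity of $X$ with constant at most $2\delta$. I do not anticipate a serious obstacle here; the only thing to be careful about is that the Gromov product has been defined only through the graph distance, so one must keep track of the uniform multiplicative constants in Lemma \ref{Gromov Comp} when passing between $(v,w)$ and $\log_s(1/D(v,w))$, but these constants depend only on $s$, so the final $\delta$ depends only on $s$.
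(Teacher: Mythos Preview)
Your proof is correct and follows essentially the same route as the paper: both arguments invoke Lemma~\ref{Gromov Comp} to translate the four-point Gromov product inequality into the diameter inequality $\diam(B_v\cup B_w)\leq \diam(B_v\cup B_u)+\diam(B_u\cup B_w)\leq 2\max\{\diam(B_v\cup B_u),\diam(B_u\cup B_w)\}$, verify this by the triangle inequality in $Z$, and then take $\log_s$ to obtain $(v,w)\geq \min\{(v,u),(u,w)\}-\delta$ with $\delta$ depending only on $s$. The extra remark about upgrading from one basepoint is unnecessary here, since the four-point condition at $O$ is precisely the definition of Gromov hyperbolicity used (via \cite{BuS}), but it does no harm.
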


\begin{proof}
Let $v,w,x \in V$.  Then
\begin{equation*}
\diam(B_v \cup B_w) \leq \diam(B_v \cup B_x) + \diam(B_x \cup B_w),
\end{equation*}
so by the above lemma there is a constant $D>0$ independent of $v,w,$ and $x$ such that 
\begin{equation*}
s^{-(v,w)} \leq D (s^{-(v,x)} + s^{-(x,w)}) \leq 2D \max(s^{-(v,x)}, s^{-(x,w)}).
\end{equation*}
Hence, 
\begin{equation*}
-(v,w) \leq \log_s(2D) + \max(-(v,x),-(x,w))
\end{equation*}
and so
\begin{equation*}
(v,w) \geq \min((v,x),(x,w)) - \log_s(2D)
\end{equation*}
which is the inequality required in the definition of a Gromov hyperbolic space.
\end{proof}

We will also work with the boundary at infinity of our hyperbolic fillings.  For completeness we include the standard construction of the boundary at infinity of a Gromov hyperbolic space here and refer the reader to \cite[Chapter 2]{BuS} for some of the details as well as more background.  

For our given Gromov hyperbolic space $X$, the points of the boundary at infinity $\partial_\infty X$ are equivalence classes of sequences of points ``diverging to infinity''.  More precisely, we say a sequence of points $\{ x_n \}$ diverges to infinity if
\begin{equation*}
\lim_{m,n \to \infty} (x_n, x_m) = \infty.
\end{equation*}
Two sequences $\{ x_n \}$ and $\{ y_m \}$ are equivalent if
\begin{equation*}
\lim_{m,n \to \infty} (x_n, y_m) = \infty.
\end{equation*}
One then may extend the Gromov product to the boundary as in \cite{BuS}.  From this one defines a metric $d$ on $\partial_\infty X$ to be a visual metric if there are constants $c, C>0$ and $a>1$ such that for all $z, z' \in \partial_\infty X$ we have
\begin{equation*}
ca^{-(z, z')} \leq d(z,z') \leq Ca^{-(z,z')}.
\end{equation*}

We now relate this construction to $X$ and $Z$.

\begin{lemma}  With our constructed $X$ above, we can identify $\partial_\infty X$ with $Z$ where the original metric on $Z$ is a visual metric.
\end{lemma}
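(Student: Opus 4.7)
The plan is to build the identification $\Phi\colon \partial_\infty X\to Z$ by sending an equivalence class of diverging sequences to the limit of the corresponding ball centers, and then read off visuality of $d$ directly from Lemma \ref{Gromov Comp}.

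First I would unpack what it means for a sequence $\{v_n\}\subseteq V$ to diverge to infinity. By Lemma \ref{Gromov Comp}, $s^{-(v_n,v_m)}\simeq \diam(B_{v_n}\cup B_{v_m})$, so $\{v_n\}$ diverges to infinity precisely when $\diam(B_{v_n}\cup B_{v_m})\to 0$. In particular the radii $r(B_{v_n})=2s^{-\ell(v_n)}$ tend to $0$, so $\ell(v_n)\to\infty$, and the centers $p_n$ of $B_{v_n}$ form a Cauchy sequence in $Z$; by compactness of $Z$ they converge to a unique $z\in Z$, independent of the choice of centers because $r(B_{v_n})\to 0$. Two diverging sequences $\{v_n\}$ and $\{w_m\}$ are equivalent iff $\diam(B_{v_n}\cup B_{w_m})\to 0$, and this holds iff their associated limits in $Z$ agree. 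This defines $\Phi$ and shows injectivity.

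For surjectivity I would use maximality of the separated sets $P_k$: given any $z\in Z$ and any $k$, maximality provides $p_k\in P_k$ with $d(z,p_k)<s^{-k}$, so the vertex $v_k$ corresponding to $B(p_k,2s^{-k})$ contains $z$ in its ball. The sequence $\{v_k\}$ satisfies $B_{v_k}\subseteq B(z,4s^{-k})$, so $\diam(B_{v_k}\cup B_{v_m})\lesssim s^{-\min(k,m)}\to 0$, hence $\{v_k\}$ diverges to infinity and $\Phi([\{v_k\}])=z$.

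Finally, for visuality, fix $z,z'\in Z$ and sequences $\{v_n\},\{w_m\}$ of the type constructed above, so that the centers lie within $s^{-n}$, resp.\ $s^{-m}$, of $z,z'$. The extended Gromov product on $\partial_\infty X$ is defined up to a bounded additive error by $(z,z')_\infty =\liminf_{n,m\to\infty}(v_n,w_m)$, so Lemma \ref{Gromov Comp} gives
\begin{equation*}
s^{-(z,z')_\infty}\simeq \liminf_{n,m\to\infty}\diam(B_{v_n}\cup B_{w_m}).
\end{equation*}
Since $B_{v_n}\cup B_{w_m}$ contains $z$ and $z'$ and is contained in $B(z,4s^{-n})\cup B(z',4s^{-m})$, its diameter tends to $d(z,z')$ as $n,m\to\infty$. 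This yields the visual estimate $d(z,z')\simeq s^{-(z,z')_\infty}$ as required. I expect the main technical nuisance to be pinning down the additive ambiguity in $(z,z')_\infty$ (different authors define the boundary Gromov product via $\liminf$, $\sup\liminf$, or an $\inf$), but this only costs bounded additive constants in the exponent and therefore multiplicative constants in the visual estimate, which is all the definition demands.
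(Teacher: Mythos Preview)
Your argument is correct and follows essentially the same route as the paper's sketch: both pivot on Lemma~\ref{Gromov Comp} to translate divergence/equivalence into the condition $\diam(B_{v_n}\cup B_{w_m})\to 0$, use the covering property of each level $V_k$ for surjectivity, and read off visuality from the same relation. Your version simply fills in details the paper omits (explicit injectivity, the limiting diameter computation for visuality, and the remark on the additive ambiguity in the boundary Gromov product).
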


 \begin{proof}[Sketch of proof]
We use 
\begin{equation}
\label{s sim diam eq}
s^{-(v,w)} \simeq \diam(B_v \cup B_w).
\end{equation}
We see $\{v_n\}$ is a sequence of vertices diverging to infinity if and only if
\begin{equation*}
\diam(B_{v_n} \cup B_{v_m}) \to 0
\end{equation*}
and so not only do the diameters satisfy $\diam(B_{v_n}) \to 0$ but the centers $p_n$ of the balls corresponding to $v_n$ also converge to a single point in $z \in Z$.  This shows we can view $\partial_\infty X$ as a subset of $Z$ by identifying a sequence of vertices diverging to infinity with the limit point of the centers of the corresponding balls.  The other inclusion follows by considering that for each $k \in \N_0$ the set of balls $\{B_v : v \in V_k\}$ covers $Z$.  Hence, for fixed $z \in Z$ for each $k \in \N_0$ we may choose a vertex $v_k$ with level $k$ such that $z \in B_{v_k}$.  This creates a sequence in $X$ diverging to infinity that corresponds to $z$.  Relation \eqref{s sim diam eq} above also shows our original metric on $Z$ is in fact a visual metric with respect to $X$.
\end{proof}

The metric paths in $X$ that we are interested in travel through many vertices and are often infinite.  For this reason we define a path in $X$ as a (possibly finite) sequence of vertices ${v_k}$ such that for all $k$, the vertices $v_k$ and $v_{k+1}$ are connected by an edge.  Alternatively we may view a path as a sequence of edges ${e_k}$ such that for all $k$, the edges $e_k$ and $e_{k+1}$ share a common vertex; the point of view will be clear from context.

We now specify what it means for a sequence of vertices $v_n$ to converge to $z \in Z$: if $v_n$ is represented by $B(p_n, r_n)$ then $v_n \to z$ if and only if $p_n \to z$ and $r_n \to 0$.  From this we also see what it means for a path (given by a sequence of vertices $\{v_k\}_{k \in \N}$ or edges $\{e_k\}_{k \in \N}$ as discussed above) to converge to a point in $Z$.

In our definitions we will work with nontangential limits.  Intuitively, a path approaches the boundary nontangentially if it stays within bounded distance of a geodesic.  In our setting this means that the smaller the radii corresponding to vertices on a path are, the closer the centers corresponding to those vertices need to be to the limit point.  We state this more precisely as a definition.

\begin{defn}
\label{nontangential limits}
A path in $X$ with vertices $v_n$ represented by $B(p_n, r_n)$ converges nontangentially to $z \in Z$ if $p_n \to z$ and $r_n \to 0$ and there exists a constant $C>0$ such that for all $n \in \N$ we have $\dist(z,p_n) \leq C r_n$.
\end{defn}

For the next lemma, we define the valence of a vertex $v$ in a graph $(V,E)$ as the number of edges having $v$ as a vertex.  To say a graph has bounded valence then means that there is a uniform constant $C$ such that every vertex $v\in V$ has valence at most $C$.

\begin{lemma}
\label{bounded valence}
The hyperbolic filling $X$ of a compact, connected, Ahlfors $Q$-regular metric measure space has bounded valence.
\end{lemma}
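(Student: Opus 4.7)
The plan is to bound the valence of an arbitrary vertex $v \in V_k$ by treating separately each of the (at most three) levels $k' \in \{k-1, k, k+1\} \cap \N_0$ from which neighbors of $v$ can come, and then summing. In each level the count comes from a standard volume-packing argument using Ahlfors $Q$-regularity and the separation condition defining $P_{k'}$.

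First I would locate the centers of potential neighbors. If $w \in V_{k'}$ is joined to $v$ by an edge, then $\overline{B_w} \cap \overline{B_v} \neq \emptyset$, so the triangle inequality applied at a common point gives $d(p_v, p_w) \leq 2s^{-k} + 2s^{-k'} \leq 2s^{-k}(1 + s)$, using $k' \geq k - 1$. Hence all such centers $p_w$ lie in the ball $B(p_v, R)$ with $R := 2s^{-k}(1 + s)$. Next I would exploit the separation of $P_{k'}$: since it is $s^{-k'}$-separated, the balls $\{B(p_w, s^{-k'}/2) : w \in V_{k'}\}$ are pairwise disjoint, and whenever $w$ is a neighbor of $v$ this smaller ball is contained in $B(p_v, R + s^{-k'}/2)$, whose radius is still $\lesssim s^{-k}$ (with constants depending only on $s$).

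Ahlfors $Q$-regularity then supplies the count: each disjoint small ball has measure $\gtrsim s^{-k' Q}$, comparable up to a factor depending only on $s$ and $Q$ to $s^{-kQ}$, while the containing ball has measure $\lesssim s^{-kQ}$. Dividing gives a bound on $\#\{w \in V_{k'} : w \sim v\}$ depending only on $s$, $Q$, and the Ahlfors regularity constant; summing over the at most three admissible values of $k'$ yields a uniform bound on the valence of $v$. The main nuisance, rather than a genuine obstacle, is handling the low-level cases where one of the radii in question exceeds $\diam Z$. But Ahlfors regularity only fails at large scales by making $\mu$ saturate at $\mu(Z)$, which is itself a fixed constant $\lesssim (\diam Z)^Q$, so the ratio estimate still delivers a uniform count (and in those cases $|V_{k'}|$ itself is already bounded for trivial reasons). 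No further ideas beyond the packing argument are required.
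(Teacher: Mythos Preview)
Your proof is correct and follows essentially the same volume-packing argument as the paper: both use the $s^{-k'}$-separation of centers to produce pairwise disjoint small balls around neighboring centers, enclose them in a ball of radius $\simeq s^{-k}$ about $p_v$, and apply Ahlfors $Q$-regularity to bound the count. The paper treats only the same-level case explicitly (with slightly different constants, e.g.\ radius $\tfrac{1}{3}s^{-n}$ for the disjoint balls) and declares the adjacent levels similar, whereas you handle all three levels uniformly and also comment on the small-$k$ boundary case; these are cosmetic differences, not a different approach.
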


\begin{proof}
Let $v$ be a vertex with level $n > 1$.  Let $W$ be the vertices with level $n$ that intersect $v$.  We bound $|W|$, the cardinality of the set $W$.   Bounds on the number of vertices adjacent to $v$ with levels $n-1$ and $n+1$ follow similarly and yield the result.  

Here we abuse notation and use $v$, $w$ as the centers of the balls corresponding to these vertices.  We note $\cup_{w \in W}B(w,\frac{1}{3}s^{-n})$ is a disjoint collection of balls by the separation of vertices on level $n$.  Moreover, this collection is contained in $B(v,4s^{-n})$.  By Ahlfors regularity there are constants $c,C$ such that for all $z \in Z$ and $r \in (0, \diam(Z)]$ we have 
\begin{equation*}
cr^{Q} \leq \mu(B(z,r)) \leq Cr^{Q}.
\end{equation*}
Hence, we see
\begin{equation*}
c |W| \frac{1}{3^Q} s^{-nQ} \leq C 4^Q s^{-nQ}
\end{equation*}
and so $|W|$ is uniformly bounded above.
\end{proof}

Our final lemma is a construction which extends a quasisymmetric homeomorphism $f$ between two compact, connected, metric measure spaces $Z$ and $W$ to a quasi-isometric map $F$ between corresponding hyperbolic fillings $X$ and $Y$.  As we make use of this construction explicitly in Theorem \ref{QS inv wcqcap}, we record the result here as a lemma as well as a sketch of the proof.  The full proof can be found in \cite[Theorem 7.2.1]{BuS}.  

\begin{lemma}
\label{QI induces QS}
Let $Z$ and $W$ be compact, connected, metric measure spaces with corresponding hyperbolic fillings $X$ and $Y$.  Let $f \colon Z \to W$ be a quasisymmetric homeomorphism.  Then there is a quasi-isometry $F \colon X \to Y$ which extends to $f$ on the boundary.
\end{lemma}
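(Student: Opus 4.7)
The plan is to define $F$ on the vertex set of $X$ and then verify the quasi-isometry conditions using Lemma \ref{Gromov Comp} together with quasisymmetric control on diameters. For each $v \in V_X$ with associated ball $B_v = B(p_v, 2s^{-\ell(v)})$, set $d_v = \diam(f(B_v))$ and choose $k_v \in \N$ with $s^{-k_v} \simeq d_v$. Since the level-$k_v$ balls of $Y$ cover $W$, pick $F(v)$ to be any vertex of $Y$ at level $k_v$ whose associated ball contains $f(p_v)$. By construction $\diam(B_{F(v)}) \simeq \diam(f(B_v))$ and $f(p_v) \in B_{F(v)}$, with constants depending only on $\eta$ and the filling parameters.

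Next I would verify the quasi-isometric embedding inequality by translating graph distance into Gromov products and levels: recall $|v - w|_X = \ell(v) + \ell(w) - 2(v,w)_X$, and by Lemma \ref{Gromov Comp},
\begin{equation*}
s^{-(v,w)_X} \simeq \diam(B_v \cup B_w),
\end{equation*}
with the analogous relation in $Y$. Using $f(p_v) \in B_{F(v)}$ and $\diam(B_{F(v)}) \simeq \diam(f(B_v))$, one obtains
\begin{equation*}
\diam(B_{F(v)} \cup B_{F(w)}) \simeq \diam(f(B_v) \cup f(B_w)) \simeq \diam(f(B_v \cup B_w)).
\end{equation*}
Quasisymmetry, applied with a fixed reference pair of points at macroscopic distance in $Z$ (which exists by compactness and connectedness), supplies a two-sided $\eta$-control of $\diam(f(B_v \cup B_w))$ in terms of $\diam(B_v \cup B_w)$. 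Taking $-\log_s$ converts these multiplicative estimates into additive ones on Gromov products. Combined with $\ell(F(v)) = \ell(v) + O(1)$, which follows similarly from quasisymmetric control on $\diam(f(B_v))$ versus $\diam(B_v)$, this yields inequality \eqref{QI def eq}.

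For the coarse surjectivity condition, given any $w \in V_Y$ I would reverse the construction using $f^{-1}$, which is again quasisymmetric: choose $v \in V_X$ at the level determined by $\diam(f^{-1}(B_w))$ whose ball contains $f^{-1}$ of the center of $B_w$, and verify that $F(v)$ lies within a bounded graph distance of $w$. For the boundary extension, if a sequence of vertices $v_n$ converges to $z \in Z$ in the sense that $p_{v_n} \to z$ and $r(B_{v_n}) \to 0$, then by continuity $f(p_{v_n}) \to f(z)$, and $\diam(B_{F(v_n)}) \simeq \diam(f(B_{v_n})) \to 0$, so $F(v_n) \to f(z)$ in $W$.

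The main obstacle is the two-sided control on $\diam(f(B_v \cup B_w))$. Quasisymmetry only provides \emph{relative} distortion bounds, not absolute diameter comparisons. The standard workaround is to normalize against a fixed reference pair of points whose distance is comparable to $\diam(Z)$, which exists by compactness and connectedness; this extracts absolute upper and lower bounds on diameters of images with constants depending only on $\eta$ and $\diam(Z)$. Additional care is required when $B_v$ and $B_w$ overlap substantially: in that regime one compares radii and center separations directly rather than diameters of unions, but the same quasisymmetric inequalities deliver the required bounds.
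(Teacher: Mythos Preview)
Your definition of $F$ is essentially equivalent to the paper's (the paper takes $B_{F(v)}$ to be a minimal vertex-ball \emph{containing} $f(B_v)$, but this differs from your choice by a bounded graph distance). The boundary-extension argument is also fine.

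The genuine gap is the claim $\ell(F(v)) = \ell(v) + O(1)$. This is false in general: it would force $\diam(f(B_v)) \simeq \diam(B_v)$ uniformly, i.e.\ $f$ bilipschitz. For a concrete counterexample take $Z=[0,1]$ with the Euclidean metric, $W=[0,1]$ with the snowflaked metric $|x-y|^{1/2}$, and $f = \mathrm{id}$; then $\ell(F(v)) \approx \tfrac{1}{2}\ell(v)$. Since your derivation of \eqref{QI def eq} decomposes $|v-w| = \ell(v)+\ell(w)-2(v,w)$ and treats the level terms and Gromov-product term separately, this error propagates: neither $\ell(F(v))-\ell(v)$ nor $(F(v),F(w))_Y - (v,w)_X$ is bounded, even though the \emph{combination} $|F(v)-F(w)|$ is controlled by $|v-w|$.

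Your strategy can be repaired, but not via the ``reference pair'' device you describe. What one actually needs is control on the \emph{ratio}
\[
\frac{\diam(B_v\cup B_w)^2}{\diam(B_v)\,\diam(B_w)} \;\simeq\; s^{|v-w|},
\]
and the corresponding ratio on the $Y$ side. Quasisymmetry bounds $\diam(f(B_v\cup B_w))/\diam(f(B_v))$ by $\eta(C\diam(B_v\cup B_w)/\diam(B_v))$; turning this into a bilipschitz bound on $\log_s$ of the ratios requires $\eta$ to be a power function, i.e.\ $f$ to be a \emph{power} quasisymmetry. This does hold here because connected spaces are uniformly perfect, but you would need to invoke and justify that fact.

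The paper sidesteps all of this: for the upper bound it only compares \emph{adjacent} vertices $v,w$, where $B_v$ and $B_w$ overlap and have comparable radii, so quasisymmetry directly bounds the radius ratio of $B_{F(v)}$ and $B_{F(w)}$ and hence $|F(v)-F(w)|$. The lower bound and coarse surjectivity then come from constructing $G$ via $f^{-1}$ and showing $G\circ F$ and $F\circ G$ are within bounded distance of the identity. This ``one-step'' argument is shorter and does not require passing through power quasisymmetry.
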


\begin{proof}[Sketch of proof]
It suffices to define $F$ as a map between vertices.  Given $x \in X$, we see $f(B_x) \subset W$ and so there is at least one vertex $y \in Y$ of minimal radius (i.e. highest level) that contains $f(B_x)$.  We set $F(x) = y$.  We show $F$ is a quasi-isometry.  For the upper bound in (\ref{QI def eq}) we show that for vertices $x, x' \in X$ with $|x-x'| \leq 1$, we have uniform control over $|F(x) - F(x')|$.  If not, then we have a sequence of such $x,x'$ such that $|F(x) - F(x')|$ gets arbitarily large.  With this, we note $\overline{B}_{F(x)} \cap \overline{B}_{F(x')} \neq \emptyset$ and from this deduce that the levels between, and hence the ratio of the radii  of the balls $B_{F(x)}$ and $B_{F(x')}$, must become arbitrarily large.   Using a common point in $\overline{B}_{F(x)} \cap \overline{B}_{F(x')}$ and the quasisymmetry condition, we arrive at a contradiction.  For the lower bound  one considers $G \colon Y \to X$ defined as above but using $f^{-1}$ in place of $f$.  One then shows that the compositions $G \circ F$ and $F \circ G$ are within a bounded distance from the identity in each case (i.e. there is a $D > 0$ such that for all $x \in X$ one has $|x - (G \circ F)(x)| < D$ and likewise for $F \circ G$).  The fact that $F(X)$ is at a bounded distance from any point in $y$  also follows from the compositions being within a bounded distance from the identity.
\end{proof}


\section{Weak capacity}
\label{Sec qcap}

Now we prove the main results involving weak capacity: Theorems \ref{qmod < qcap}, \ref{qcap < qmod}, and \ref{QS inv qcap}.  Recall we consider compact, connected, Ahlfors $Q$-regular metric measure spaces $(Z,d,\mu)$.  After the proofs of the main theorems we prove that for open $A,B \subseteq Z$ with $\dist(A,B) > 0$ we always have $\pcap(A,B) > 0$.  We prove Theorem \ref{qmod < qcap} for disjoint continua first.  For this we need a technical lemma.

\begin{lemma}
\label{short path lemma}
Let $p \geq 1$, let $f \in \ell^p(V)$, and let $A \subseteq Z$ be a continuum.  Let $\{B_{v_n}\}$ be a sequence of balls corresponding to vertices $v_n$ with $\ell(v_n) \to \infty$.  Suppose that there is a constant $c>0$ such that for all large enough $n$ we have
\begin{equation*}
\Hc(A \cap B_{v_n}) \geq c s^{-\ell(v_n)}.
\end{equation*}
Then for each $\delta > 0$ there is an $N$ such that for all $n \geq N$, there is a vertex path $\sigma_n \subseteq X$ that starts from $v_n$ and has limit in $A$ which satisfies $\sum_{\sigma_n} f(v) \leq \delta$.
\end{lemma}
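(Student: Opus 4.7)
The plan is to build $\sigma_n$ by a probabilistic argument: I average over a random terminal point $z \in A \cap B_{v_n}$ distributed according to a Frostman-type measure, and show that the expected value of $\sum_{v \in \sigma(z)} |f(v)|$ tends to zero as $n \to \infty$.

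Using the Hausdorff content lower bound $\Hc(A \cap B_{v_n}) \geq c s^{-\ell(v_n)}$, I would first invoke Frostman's lemma on the compact set $A \cap B_{v_n}$ to obtain a probability measure $\bar\nu$ supported there with $\bar\nu(B(x,r)) \lesssim r \cdot s^{\ell(v_n)}$ for every ball $B(x,r) \subseteq Z$; in particular $\bar\nu(B_w) \lesssim s^{-j}$ whenever $w$ is a vertex at level $\ell(v_n) + j$. For each $z$ in the support of $\bar\nu$, I would define a vertex path $\sigma(z) = (v_n, u_1(z), u_2(z), \dots)$ by letting $u_j(z)$ be any vertex at level $\ell(v_n)+j$ whose ball contains $z$ (such a vertex exists since the level-$k$ balls cover $Z$). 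Consecutive pairs $u_j(z), u_{j+1}(z)$ share the point $z$ and are therefore adjacent in $X$; since the radii shrink geometrically, $\sigma(z)$ converges (in fact nontangentially) to $z \in A$.

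The key estimate is of the expected $f$-mass along $\sigma(z)$. For each $j \geq 1$ and each vertex $w$ at level $\ell(v_n) + j$, the event $\{u_j(z) = w\}$ is contained in $\{z \in B_w\}$, so $P(u_j(z) = w) \leq \bar\nu(B_w) \lesssim s^{-j}$. Writing $E_z[|f(u_j(z))|] = \sum_w |f(w)| \, P(u_j(z) = w)$ and applying H\"older's inequality with exponent $p$, together with $\sum_w P(u_j(z) = w) = 1$, gives
\begin{equation*}
E_z[|f(u_j(z))|] \lesssim s^{-j/p} \bigl\| f|_{V_{\ell(v_n)+j}} \bigr\|_p.
\end{equation*}
A second application of H\"older in the variable $j$ (using convergence of $\sum_j s^{-j/(p-1)}$ when $p > 1$, and a trivial bound when $p = 1$) then yields
\begin{equation*}
E_z\Bigl[\sum_{j \geq 1} |f(u_j(z))|\Bigr] \lesssim \bigl\| f|_{V_{>\ell(v_n)}} \bigr\|_p.
\end{equation*}

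Finally, I would observe that $f \in \ell^p(V)$ forces both $|f(v_n)| \to 0$ (since for any $\epsilon > 0$ the set $\{v : |f(v)| \geq \epsilon\}$ is finite and so avoided once $\ell(v_n)$ exceeds the maximum level of its members) and $\bigl\| f|_{V_{>\ell(v_n)}} \bigr\|_p \to 0$ by tail summability of $|f|^p$. Hence $E_z[\sum_{j \geq 0} |f(u_j(z))|] \to 0$ as $n \to \infty$, so for all sufficiently large $n$ this expectation is at most $\delta$, and some specific $z$ must realize $\sum_j |f(u_j(z))| \leq \delta$; the resulting $\sigma(z)$ is our $\sigma_n$. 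The main obstacle is justifying the $\lesssim s^{-j}$ mass bound on balls $B_w$ at deep levels, which is precisely what Frostman's lemma extracts from the Hausdorff content hypothesis.
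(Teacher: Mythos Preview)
Your argument is correct and gives the same final estimate $\sum_{\sigma_n} |f(v)| \lesssim \|f|_{V_{>\ell(v_n)}}\|_p \to 0$ that the paper obtains, but the route is genuinely different. The paper avoids Frostman's lemma entirely: it defines a ``bad'' set
\[
K_n=\{z\in A: \exists\, v\ni z,\ \ell(v)\ge\ell(v_n),\ f(v)^p\ge \epsilon\, r(B_v)\},
\]
bounds $\Hc(K_n)\le \tfrac{10}{\epsilon}\|f_{\ell(v_n)}\|_p^p$ by a Vitali $5r$-covering, and then tunes $\epsilon=\epsilon_n$ so that $\Hc(K_n)<\Hc(A\cap B_{v_n})$, producing a single point $z\in(A\cap B_{v_n})\setminus K_n$; along the descending chain through this $z$ one has the pointwise bound $f(w_k)<(\epsilon_n\,r(B_{w_k}))^{1/p}$, and a geometric sum finishes. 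Your approach is the measure-theoretic dual: Frostman converts the content lower bound into a measure with the right growth, and your two H\"older steps play the role of the paper's pointwise bound plus geometric series. The paper's version is slightly more self-contained (no appeal to Frostman in a general metric space) and gives a deterministic good point directly; yours is conceptually cleaner and makes the dependence on $\|f|_{V_{>\ell(v_n)}}\|_p$ transparent from the start. One small point worth tightening in your write-up: fix the selection $u_j(z)$ measurably (e.g.\ least index among vertices at level $\ell(v_n)+j$ containing $z$), or bypass the issue by bounding $\sum_j|f(u_j(z))|\le \sum_{\ell(w)>\ell(v_n)}|f(w)|\chi_{B_w}(z)$ and integrating that against $\bar\nu$.
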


\begin{proof}
We note if $f$ has finite support then the result is immediate.  Hence, we assume without loss of generality that $f$ does not have finite support.  We also may assume all balls in consideration have radius bounded above by 4 as $\diam(Z) \leq 1$.  For $\epsilon > 0$ and fixed $n$ we define
\begin{equation*}
K_n = K_n(\epsilon) = \{ z \in A \colon \exists v \in X \text{ with } z \in B_v, f(v)^p \geq \epsilon r(B_v) \text{ and } \ell(v) \geq \ell(v_n)\}.
\end{equation*}
We then may cover $K_n$ by $\{B_{v_z}\}_{z \in K_n}$ where $f(v_z)^p \geq \epsilon r(v_z)$.  As the balls corresponding to these vertices have uniformly bounded diameter there is a countable subcollection of vertices $\mathscr{G}_n$ such that 
\begin{equation*}
\bigcup_{z \in K_n}B_{v_z} \subseteq \bigcup_{v \in \mathscr{G}_n} 5B_v
\end{equation*}
and such that if $v,w \in \mathscr{G}_n$ are distinct, then $B_v \cap B_w = \emptyset$ (see \cite[Theorem 1.2]{He}).  Hence, $K_n \subseteq \cup_{v \in \mathscr{G}_n} 5B_v$. Thus, we have
\begin{equation*}
\Hc(K_n) \leq \sum_{\mathscr{G}_n} 10 r(v) \leq 10 \sum_{\mathscr{G}_n} \frac{f(v)^p}{\epsilon} \leq \frac{10}{\epsilon} \norm{f_{\ell(v_n)}}_p^p
\end{equation*}
where $f_j$ is $f$ restricted to vertices of level $j$ and higher.

In the above we use $\epsilon = \epsilon_n$ defined by
\begin{equation*}
\frac{10}{\epsilon_n} \norm{f_{\ell(v_n)}}_p^p  = \frac{1}{2} cs^{-\ell(v_n)}
\end{equation*}
which is possible as $\norm{f_{\ell(v_n)}}_p^p > 0$ for all $n$ as $f$ does not have finite support.  We conclude that for large enough $n$ we have 
\begin{equation*}
\Hc(K_n) \leq \frac{1}{2} \Hc(A \cap B_{v_n}).
\end{equation*}
Hence, there is a point $z \in (A \cap B_{v_n}) \setminus K_n$.

For this $z$ we form a vertex path $\sigma$ from $v_n$ with limit $z$ by choosing a vertex $w_k$ for each $k > \ell(v_n)$ such that $z \in B_{w_k}$.  As $z \notin K_n$, we estimate the sum
\begin{equation*}
\begin{split}
\sum_\sigma f(v) &\leq (\epsilon_n^{1/p}) \sum_{k=\ell(v_n)}^\infty (2s^{-k})^{1/p} \\
&= (2 \epsilon_n)^{1/p} \sum_{k=\ell(v_n)}^\infty (s^{1/p})^{-k} \\
&\lesssim (2 \epsilon_n)^{1/p} s^{-\ell(v_n)/p} \\
&\lesssim (\epsilon_n s^{-\ell(v_n)})^{1/p}.
\end{split}
\end{equation*}
From our choice of $\epsilon_n$ we have
\begin{equation*}
\norm{f_{\ell(v_n)}}_p^p  \simeq \epsilon_n s^{-\ell(v_n)}
\end{equation*}
and as $\norm{f_{\ell(v_n)}}_p^p \to 0$ as $n \to \infty$, the result holds.
\end{proof}


\begin{proof}[Proof of Theorem \ref{qmod < qcap}]
We begin by showing the result for disjoint continua $A$ and $B$.  Let $\tau \colon E \to [0,\infty]$ be an admissible function for $\qcap(A,B)$.  Define $f \colon V \to \R$ by $f(v)=\sum_{e \sim v} \tau(e)$ where the sum is over all edges with $v$ as an endpoint.

We note $\norm{f}_{Q,\infty}^Q \simeq \norm{\tau}_{Q,\infty}^Q$ follows from Lemma \ref{BoS Lemma}.  Indeed, in this case our set $J \subseteq V \times E$ consists of vertex-edge pairs $(v,e)$ with $e$ having $v$ as a boundary vertex where $s_v = f(v)$ and $t_e = \tau(e)$.  As $X$ has bounded valence, Lemma \ref{BoS Lemma} applies to give us one direction of comparability.  By using edge-vertex pairs $(e,v)$ and adjusting the definitions of the $s_v$ and $t_e$ we also get the other bound.  We also note that if $\norm{f}_{Q,\infty} < \infty$ this means $f \in \ell^p$ whenever $p > Q$.

Consider the functions $u_n \colon Z \to \R$ given by 
\begin{equation}
\label{un def}
u_n = \sum_{v \in V_n} \frac{f(v)}{r(B_v)}\chi_{2B_v},
\end{equation}
where we recall $V_n$ are the vertices on level $n$.  We claim $2 u_n$ is admissible for $Q$-modulus between $A$ and $B$ for large enough $n$.  Suppose this fails for some sequence $n_i$.  Then there is a rectifiable path $\gamma_{n_i}$ connecting $A$ and $B$ with $\int_{\gamma_{n_i}} u_{n_i} \leq \frac{1}{2}$.  The endpoints of $\gamma_{n_i}$ lie in balls $\frac{1}{2} B_{v_{n_i}^A}$ and $ \frac{1}{2}B_{v_{n_i}^B}$ corresponding to vertices of level ${n_i}$.  We now apply Lemma \ref{short path lemma} with $\delta = \frac{1}{16}$ to obtain, for all large enough ${n_i}$, paths $\sigma_{n_i}^A$ and $\sigma_{n_i}^B$ with
\begin{equation*}
\sum_{\sigma_{n_i}^A} f(v) < \frac{1}{16} \text { and }\sum_{\sigma_{n_i}^B} f(v) < \frac{1}{16}.
\end{equation*}
From the definition of $f$ it is clear that summing along the edges of these paths gives a $\tau$-length of less than $\frac{1}{16}$ as well.  We note the hypothesis in Lemma \ref{short path lemma} follows once $s^{-{n_i}} < \min(\diam(A), \diam(B))$.

We show this violates the admisibility of $\tau$.  Indeed, if $v_0,\dots,v_m$ is a path of vertices in $V_{n_i}$ where $\gamma_{n_i}$ passes through each $\frac{1}{2} B_{v_k}$ and $\ell(\gamma_{n_i}) \geq 8s^{-{n_i}}$, then
\begin{equation*}
\begin{split}
\int_{\gamma_{n_i}}u_{n_i} &\geq \sum_{j=0}^{m} \frac{r(B_{v_j}) f(v_j)}{2r(B_{v_j})} \\
&= \sum_{j=0}^{m} \frac{1}{2}f(v_j) \\
&\geq \sum_{j=1}^{m} \tau(e_j),
\end{split}
\end{equation*}
where $e_j$ denotes the edge connecting $v_{j-1}$ to $v_j$. Choosing such a path with $v_0$ and $v_m$ corresponding to $B_{v_{n_i}^A}$ and $B_{v_{n_i}^B}$ and combining this path with $\sigma_{n_i}^A$ and $\sigma_{n_i}^B$ yields a path in the graph with nontangential boundary limits in $A$ and $B$ with $\sum \tau(e) < 1$, contradicting the admissibility of $\tau$.  

We have just shown that for large enough $n$ the function $2u_n$ is admissible for $\qmod(A, B)$.  It remains to compute $\norm{u_n}_Q^Q$.  We have 
\begin{equation}
\label{u_n bound}
\begin{split}
\norm{u_n}_Q^Q &= \int_Z \left (\sum_{v \in V_n} \frac{f(v)}{r(B_v)} \chi_{2B_v} \right)^Q \\
&\lesssim  \int_Z \left (\sum_{v \in V_n} \frac{f(v)^Q}{r(B_v)^Q} \chi_{2B_v} \right) \\
&\lesssim \sum_{v \in V_n} f(v)^Q
\end{split}
\end{equation}
where we have used the bounded valence of our hyperbolic filling (Lemma \ref{bounded valence}) for the first inequality and Ahlfors $Q$-regularity for the second.  From \cite[Proof of Theorem 1.4]{BoS}, for any $N>1$ there is an $n \in [N,2N]$ such that 
\begin{equation*}
\sum_{v \in V_n} f(v)^Q \lesssim \norm{f}_{Q,\infty}^Q.
\end{equation*}
 Thus, for specific large enough $n$, we see that $u_n$ is admissible for the modulus between $A$ and $B$ and satisfies $\norm{u_n}_Q^Q \lesssim \norm{\tau}_{Q,\infty}^Q$.  Infimizing over admissible $\tau$ yields the result for continua.

Now, for open sets $A$ and $B$ we use the same technique, but the setup is more involved: we need to work safely inside the open sets to be able to satisfy the hypothesis of Lemma \ref{short path lemma}.  For $\lambda > 0$ define $ A_\lambda = \{a \in A : d(a, Z \setminus A) > \lambda \}$ and define $B_\lambda$ similarly.  Fix $\lambda>0$ such that $A_\lambda$ and $B_\lambda$ are nonempty (all small enough $\lambda$ will satisfy this).  We claim for such $\lambda$ we have $\qmod(A_\lambda,B_\lambda) \lesssim \qcap(A, B)$ with an implicit constant independent of $\lambda$.  Indeed, as above we claim $2u_n$ is admissible for $\qmod(A_\lambda, B_\lambda)$ for large enough $n$, where we recall $u_n$ is defined in equation (\ref{un def}).

If $2u_n$ is not admissible, then we may find a path $\gamma_n$ on level $n$ with $\int_{\gamma_n} u_n \leq \frac{1}{2}$.  We note for large enough $n$, the endpoints $v_n^A$ and $v_n^B$ of $\gamma_n$ satisfy $B_{v_n^A} \subseteq A$ and  $B_{v_n^B} \subseteq B$. We then apply the above procedure to create a short $\tau$-path connecting $A$ and $B$ which contradicts the admissibility of $\tau$.  From the norm computation above, by infimizing over admissible $\tau$ we conclude $\qmod(A_\lambda,B_\lambda) \lesssim \qcap(A, B)$ with an implicit constant independent of $\lambda$.

We show now that this implies $\qmod(A,B) \lesssim \qcap(A,B)$.  
 Let $\lambda_n = \frac{1}{n}$ and, for each $n$, let $\sigma_n \geq 0$ be an admissible function for $\qmod(A_{\lambda_n},B_{\lambda_n})$ such that $\norm{\sigma_n}_Q^Q \lesssim \qcap(A,B)$.  

By Mazur's Lemma \cite[p.~19]{HKST}, there exist convex combinations $\rho_n$ of $\sigma_k$ with $k \geq n$ and a limit function $\rho$ such that $\rho_n \to \rho$ in $L^Q$.  By Fuglede's Lemma \cite[p.~131]{HKST}, after passing to a subsequence of the $\rho_n$, we may assume that for all paths $\gamma$ except in a family $\Gamma_0$ of $Q$-modulus 0, $\int_{\gamma} \rho_n \to \int_{\gamma} \rho$.  As the $Q$-modulus of $\Gamma_0$ is $0$, there exists a function $\sigma \geq 0$ with $\int_{\gamma} \sigma = \infty$ for all $\gamma \in \Gamma_0$ and $\int_Z \sigma^Q < \infty$.  

We show $\rho + c \sigma$ is admissible for $\qmod(A,B)$ for any $c>0$. Let $\gamma$ be a path connecting $A$ and $B$. If $\gamma \in \Gamma_0$, then $\int_{\gamma} c \sigma = \infty$, so suppose $\gamma \notin \Gamma_0$.  Then, as $A$ and $B$ are open, $\gamma$ connects $A_{\lambda_n}$ and $B_{\lambda_n}$ for some $n$.  As $A_{\lambda_n} \subseteq A_{\lambda_k}$ for $n \leq k$, and likewise for $B$, we see $\int_{\gamma} \sigma_k \geq 1$ whenever $n \leq k$.  Hence, $\int_{\gamma} \rho_m \geq 1$ for all $m \geq n$, and so $\int_{\gamma} \rho \geq 1$.  Thus, $\rho + c \sigma$ is admissible for $\qmod(A,B)$.  Now, 
\begin{equation*}
\begin{split}
\norm{\rho + c \sigma}_Q^Q &\lesssim \norm{\rho}_Q^Q + \norm{c \sigma}_Q^Q \\
&\lesssim \qcap(A,B) + \norm{c \sigma}_Q^Q
\end{split}
\end{equation*}
and as $c$ may be taken arbitrarily small we have $\qmod(A,B) \lesssim \qcap(A,B)$.
\end{proof}


\begin{proof}[Proof of Theorem \ref{qcap < qmod}]
Recall that for this direction, in addition to working in a compact, connected Ahlfors $Q$-regular metric measure space $(Z,d,\mu)$ we also assume that $Z$ is a $Q$-Loewner space.  We first assume $A$ and $B$ are open sets with $\dist(A,B) > 0$.  For the continua case the proof will be the same except for one detail.  This is noted in the following proof and the required modifications will follow from Lemma \ref{continuum near 0}.

If $\qmod(A,B)=\infty$ there is nothing to prove so suppose $\qmod(A,B) < \infty$.  Let $\rho \colon Z \to [0,\infty]$ be $Q$-integrable and admissible for modulus.  Define $v \colon Z \to \R$ by
\begin{equation}
\label{v function def}
v(z) = \inf \left \{\int_{\gamma}{\rho}: \gamma \in \Gamma_z \right \}
\end{equation}
where $\Gamma_z$ is the set of all rectifiable paths with one endpoint in $A$ and the other endpoint equal to $z$.  It is clear that $\rho$ is an upper gradient for $v$; that is, given $y,z \in Z$, we have $|v(y)-v(z)| \leq \int_{\gamma} \rho$ whenever $\gamma$ is a rectifiable path connecting $y$ and $z$.  Hence, as $\rho$ is $Q$-integrable, it follows from \cite[Theorem 9.3.4]{HKST} that $v$ is measurable.  Set $u = \min\{v,1\}$. We see $\rho$ is an upper gradient for $u$ as well. Clearly $u(a)=0$ whenever $a\in A$ and, as $\rho$ is admissible for $Q$-modulus, we see $u(b) = 1$ whenever $b\in B$.   

By \cite[Theorem 5.12]{HK}, we note that $Z$ supports a $Q$-Poincar\'e inequality for continuous functions.  This is equivalent to a $Q$-Poincar\'e inequality for locally integrable functions by \cite[Theorem 8.4.1]{HKST}.  Following \cite[Theorem 12.3.9]{HKST}, first seen in \cite{KZ}, this promotes to a $p$-Poincar\'e inequality for functions which are integrable on balls for $p$ slightly smaller than $Q$.  

Now, consider the function $\tau \colon E\to \R$ by 
\begin{equation*}
\tau (e)= r(e_+)\biggl (\dashint_{K e_+} \rho^p \biggr )^{1/p} + r(e_-) \biggl (\dashint_{Ke_-} \rho^p \biggr)^{1/p}
\end{equation*}
with $K$ to be chosen and $e_+, e_-$ the balls representing the vertices of $e$.  As before, $Ke_+$ denotes the ball with the same center and as $e_+$ and with radius $K r(e_+)$.  We claim with appropriate $K$ that $\tau$ is admissible.  We note that for intersecting balls $B'$ and $B''$ with a constant $k \geq 1$ such that $B' \subseteq kB''$ and $B'' \subseteq kB'$, we have
\begin{equation*}
\begin{split}
|u_{B''} - u_{kB'}| &= \left | \frac{1}{|B''|} \int_{B''} u - \frac{1}{|B''|} \int_{B''} u_{kB'}  \right | \\
&\leq \frac{1}{|B''|} \int_{B''} |u - u_{kB'}| \\
&\leq \frac{1}{|B''|} \int_{kB'} |u - u_{kB'}| \\
&= \frac{|kB'|}{|B''|} \dashint_{kB'} |u - u_{kB'}| \\
& \lesssim \dashint_{kB'} |u - u_{kB'}|
\end{split}
\end{equation*}
and similarly $|u_{B'} - u_{kB'}| \lesssim \dashint_{kB'} |u_{kB'} - u|$.  Hence, by the triangle inequality,
\begin{equation*}
|u_{B'} - u_{B''}| \lesssim \dashint_{kB'} |u - u_{kB'}|
\end{equation*}
and so
\begin{equation*}
|u_{B'}- u_{B''}| \lesssim \dashint_{kB'} |u - u_{kB'}| + \dashint_{kB''} |u- u_{kB''}|.
\end{equation*}
In our graph, there is a uniform $k$ such that the above holds whenever $B'$ and $B''$ are vertices for a given edge. Thus, using the above notation where $e$ is an edge with $e_+$ and $e_-$ as the balls representing the vertices of $e$, we have
\begin{equation*}
\begin{split}
|u_{e_+}-u_{e_-}| &\lesssim \dashint_{k e_+}|u-u_{ke_+}| + \dashint_{k e_-}|u-u_{ke_-}| \\
&\lesssim r(e_+) \biggl (\dashint_{K e_+} \rho^p \biggr )^{1/p} + r(e_-) \biggl (\dashint_{K e_-} \rho^p \biggr )^{1/p} \\
&=\tau(e),
\end{split}
\end{equation*}
where $K = \lambda k$ arises from the Poincar\'e inequality (inequality \eqref{Poincare Inequality}).  Thus, if $\gamma$ is a path in the hyperbolic filling with limits in $A$ and $B$ we have, summing over the edges $e$ in $\gamma$,
\begin{equation}
\label{tau admissible telescope condition}
\sum_{\gamma}|u_{e_+}-u_{e_-}| \lesssim \sum_{\gamma}\tau(e).
\end{equation}
Now, $\gamma$ has boundary limits in $A$ and $B$.  We recall that if a sequence of vertices $B_n$ along a path approaches a limit $z \in Z = \partial_\infty X$, then the centers $p_n$ of the $B_n$ satisfy $p_n \to z$.  Thus, as $A$ and $B$ are open, for edges sufficiently close to $A$ we have $u_{e_+} = 0$ and for edges sufficiently close to $B$ we have $u_{e_+} = 1$.  We remark here that this fact is not true in the case that $A$ and $B$ are disjoint continua; this is where we will use Lemma \ref{continuum near 0}.  Hence, $1 \lesssim \sum_{\gamma} \tau(e)$ with constant independent of $\gamma$.  Thus, for suitable $c$ depending only on the constants $\lambda$ and $C$ from the Poincar\'e inequality and $k$ above, $c\tau$ is admissible.

It remains to compute $\norm{\tau}_{Q,\infty}$.  We follow a proof similar to \cite[Proposition 5.3]{BoS}.  We estimate the number of edges $e$ with an endpoint labelled as $v$ that belong to 
\begin{equation*}
V(\alpha) = \{v \in V : r(B_v) \biggl (\dashint_{KB_v}\rho^p \biggr )^{1/p} > \frac{\alpha}{2} \}
\end{equation*}
where $\alpha > 0$.  We will bound $\norm{\tau}_{Q,\infty}$ by bounding $\#V(\alpha)$.  Now, 
\begin{equation*}
\#V(\alpha) = \int_Z \biggl (\sum_{v \in V(\alpha)} \frac{1}{|B_v|}\chi_{B_v} \biggr )
\end{equation*}
and, by the geometric structure of our graph, we have, for $z \in Z$,
\begin{equation*}
\sum_{v \in V(\alpha)} \frac{1}{|B_v|} \chi_{B_v}(z) \lesssim \frac{1}{|B_z|}
\end{equation*}
where $B_z$ is the ball in $V(\alpha)$ of smallest radius that contains $z$ (such a ball exists for almost every $z$ as $\rho \in L^p$).

We note that for $v \in V(\alpha)$ and $z\in Kv$ we have
\begin{equation*}
\biggl (\dashint_{KB_v}\rho^p \biggr)^{1/p} \leq M(\rho^p)(z)^{1/p}
\end{equation*}
where $M$ denotes the (uncentered) Hardy-Littlewood maximal function (see for instance \cite[Chapter 2]{He}).  Thus,
\begin{equation*}
r(B_v)M(\rho^p)^{1/p}(z) > \frac{\alpha}{2}
\end{equation*}
for all $z \in KB_v$.  For such $v$, we see 
\begin{equation}
\label{alpha ineq}
\frac{1}{r(B_v)^Q} \leq \frac{2^Q M(\rho^p)(z)^{Q/p}}{\alpha^Q}.
\end{equation}
Hence, 
\begin{equation*}
\begin{split}
\# V(\alpha) &\lesssim \int_Z \biggl (\sum_{v \in V(\alpha)} \frac{1}{|B_v|}\chi_{B_v} \biggr ) \\
&\lesssim \int_Z \frac{1}{|B_z|} \\
&\lesssim \int_Z \frac{M(\rho^p)(z)^{Q/p}}{\alpha^Q} \\
&\lesssim \frac{1}{\alpha^Q} \int_Z (\rho^p )^{Q/p} \\
&= \frac{1}{\alpha^Q} \norm{\rho}_Q^Q
\end{split}
\end{equation*}
where we have used Ahlfors regularity with inequality (\ref{alpha ineq}) to bound $\frac{1}{|B_z|}$ and the fact that $Q/p > 1$ to bound the maximal function as an operator on $L^{Q/p}(Z)$.  Now, if an edge $e$ satisfies $\tau(e) > \alpha$ then at least one of its vertices must belong to $V(\alpha)$.  As $X$ has bounded valence, there is an $L>0$ such that each vertex can only occur as the boundary of at most $L$ edges.  Thus,
\begin{equation*}
\#\{e \in E : \tau(e) > \alpha\} \leq L \#V(\alpha) \lesssim \frac{L}{\alpha^Q} \norm{\rho}_Q^Q
\end{equation*}
From this we see $\norm{\tau}_{Q,\infty}^Q \lesssim \norm{\rho}_Q^Q$ and hence $\qcap(A,B) \lesssim \qmod(A,B)$ with a constant depending only on $Z$ and the hyperbolic filling.
\end{proof}

We now establish a lemma required to complete the proof of Theorem \ref{qcap < qmod} in the case of disjoint continua.  We continue to work with an admissible $\rho \in L^Q(Z)$.  For this lemma we need the following fact.

\begin{lemma}
\label{Cartan}
Let $\eta > 0$ and set
\begin{equation*}
E = E(\eta, \rho) = \left \{z \in Z:  \text{there exists }r > 0 \ \text{with } \int_{B(z,r)} \rho^Q \geq \eta r^Q \right \}.
\end{equation*}
Then $\Hq(E) \leq \frac{10^Q}{\eta} \int_Z \rho^Q$.
\end{lemma}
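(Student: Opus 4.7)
The plan is to prove this by a standard $5r$-covering argument (basic covering theorem / Vitali). The idea is that each point of $E$ comes with a ball witnessing the density bound $\int_{B(z,r)}\rho^Q\geq\eta r^Q$, and these balls cover $E$; extracting a disjoint subcollection whose $5$-dilates still cover $E$ converts the measure-theoretic bound into a Hausdorff content bound.

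Concretely, first I would, for each $z\in E$, select a radius $r_z>0$ such that
\begin{equation*}
\int_{B(z,r_z)}\rho^Q\ \geq\ \eta\, r_z^Q.
\end{equation*}
The radii $r_z$ are uniformly bounded above since $\diam(Z)<1$, so the basic $5r$-covering theorem in metric spaces (see \cite[Theorem~1.2]{He}) yields a countable disjoint subfamily $\{B(z_i,r_{z_i})\}_i$ whose dilates $\{B(z_i,5r_{z_i})\}_i$ still cover $E$.

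Next I would estimate the $Q$-Hausdorff content using this cover of $E$. Since each cover ball has diameter $10\,r_{z_i}$, and using in turn the density inequality for $r_{z_i}$ and the disjointness of the undilated balls,
\begin{equation*}
\mathscr{H}^Q_\infty(E)\ \leq\ \sum_i (10\, r_{z_i})^Q\ =\ 10^Q\sum_i r_{z_i}^Q\ \leq\ \frac{10^Q}{\eta}\sum_i\int_{B(z_i,r_{z_i})}\rho^Q\ \leq\ \frac{10^Q}{\eta}\int_Z\rho^Q.
\end{equation*}
This is exactly the claimed bound.

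There is no real obstacle here; the only point to be slightly careful about is the availability of the $5r$-covering theorem in the metric setting, which requires the selected radii to be bounded above, a condition trivially satisfied because $Z$ is compact with $\diam(Z)<1$. The rest is a direct chain of inequalities.
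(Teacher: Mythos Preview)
Your proof is correct and is essentially identical to the paper's own argument: select a witnessing ball for each $z\in E$, apply the $5r$-covering theorem (the paper likewise cites \cite[Theorem~1.2]{He}, using boundedness of $Z$ to ensure uniformly bounded radii), and then run the same chain of inequalities to bound $\mathscr{H}^Q_\infty(E)$.
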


\begin{proof}
For each $z \in E$ there is a ball $B_z = B(z, r_z)$ for which $\int_{B_z} \rho^Q \geq \eta {r_z}^Q$.  As $Z$ is bounded it is clear we may assume the balls $B_z$ have uniformly bounded radius.  Hence, we may find a disjoint collection of these balls $B_{z_i}$ such that $E \subseteq \bigcup_i 5B_{z_i}$.  Thus, 
\begin{equation*}
\Hq(E) \leq \sum_i (10 r_{z_i})^Q = 10^Q \sum_i r_{z_i}^Q \leq \frac{10^Q}{\eta} \sum_i \int_{B_{z_i}} \rho^Q \leq \frac{10^Q}{\eta} \int_Z \rho^Q.
\qedhere
\end{equation*}
\end{proof}
We note that $Z$ here may be replaced by an appropriate smaller ambient space $Z'$ as long as we stipulate that we only consider radii $r$ for which $B(z,r) \subseteq Z'$.  Indeed, in our case we will apply this to $Z' = c_0B_v = B(z_0, c_0 R)$ with $c_0$ a constant that depends only on our path and the Loewner function. Thus, in this case the conclusion reads $\Hq(E) \leq \frac{10^Q}{\eta} \int_{c_0B_v} \rho^Q$.


We also use $u$ from the proof of Theorem \ref{qcap < qmod}.  Recall this means $u=\min(v,1)$ where $v$ is defined in equation (\ref{v function def}).  We also will carefully keep track of constants.  We will denote the Ahlfors regularity constants as $c_Q$ and $C_Q$.  That is, for balls $B'$ with radius $r$ bounded above by $\diam(Z)$ we have $c_Q r^Q \leq \mu(B') \leq C_Q r^Q$.
\begin{lemma}
\label{continuum near 0}
Consider a sequence of balls $B_v \to a \in A$ nontangentially.  Then $u_{B_v} = \dashint_{B_v} u \to 0$.  \end{lemma}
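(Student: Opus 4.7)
The plan is to combine two facts about $u=\min\{v,1\}$: it vanishes on the continuum $A$, and it admits $\rho \in L^Q(Z)$ as an upper gradient. Together with the Loewner and $Q$-regularity hypotheses, these will force $u$ to be small on average on balls shrinking to any $a \in A$. I will execute this via the Cartan-type estimate of Lemma \ref{Cartan} together with the chaining argument driven by the Poincar\'e inequality, in the same spirit as the construction of the admissible $\tau$ in Theorem \ref{qcap < qmod}.

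First I reduce to balls centered at $a$: the nontangential hypothesis of Definition \ref{nontangential limits} gives a constant $C_1$ with $B_v \subseteq B(a, C_1 r(B_v))$, and by Ahlfors regularity $u_{B_v} \lesssim u_{B(a, C_1 r(B_v))}$, so it suffices to show $u_{B(a, R)} \to 0$ as $R \to 0^+$. Next, for $R < \diam A$, the connected component of $A \cap \overline{B(a, 2R)}$ containing $a$ must meet $\partial B(a,R)$, producing a continuum $A_R$ with $a \in A_R \subseteq \overline{B(a, 2R)}$ and $\diam A_R \geq R$ on which $u \equiv 0$.

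The core step is a pointwise bound for $u$ off a small bad set. Applying Lemma \ref{Cartan} in the localized form to $Z' = B(a, CR)$ for an appropriate $C$ (large enough to dominate all Poincar\'e and Loewner dilation constants below), the set $E = E(\eta, \rho)$ satisfies $\Hq(E) \leq (10^Q/\eta)\int_{B(a, CR)} \rho^Q$, and by Ahlfors regularity $\mu(E) \lesssim \Hq(E)$. For any $z \in B(a, CR) \setminus E$, H\"older's inequality gives $(\dashint_{B(z, r)} \rho^p)^{1/p} \lesssim \eta^{1/Q}$ for every $p < Q$ and every admissible $r$. The standard chaining argument using the $p$-Poincar\'e inequality on nested dyadic balls $B(z, 2R \cdot 2^{-k})$ (with $p$ slightly below $Q$, promoted from the $Q$-Poincar\'e inequality as in the proof of Theorem \ref{qcap < qmod}) then gives $|u(z) - u_{B(a, 2R)}| \lesssim R\eta^{1/Q}$ at every Lebesgue point $z$ of $u$ in $B(a, R) \setminus E$. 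Applied at a Lebesgue point $z_0 \in A_R \setminus E$, together with $u(z_0) = 0$, the same chaining gives $|u_{B(a, 2R)}| \lesssim R\eta^{1/Q}$, whence $u(z) \lesssim R\eta^{1/Q}$ for $\mu$-almost every $z \in B(a,R) \setminus E$.

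Averaging and using $u \leq 1$ on the bad set,
\begin{equation*}
u_{B(a,R)} \lesssim R\eta^{1/Q} + \frac{\mu(B(a,R) \cap E)}{\mu(B(a,R))} \lesssim R\eta^{1/Q} + \frac{\int_{B(a, CR)} \rho^Q}{\eta R^Q}.
\end{equation*}
Optimizing in $\eta$ yields $u_{B(a, R)} \lesssim \bigl(\int_{B(a, CR)} \rho^Q\bigr)^{1/(Q+1)}$, which tends to $0$ as $R \to 0^+$ by absolute continuity of the integral, since $\rho \in L^Q(Z)$. The main obstacle is the anchoring step --- producing a Lebesgue point of $u$ inside $A_R \setminus E$. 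This uses that in a $Q$-Loewner $Q$-regular space the continuum $A_R$ has $Q$-capacity bounded below by a function of $\diam A_R / R \geq 1$, Lebesgue points of $u$ exhaust $Z$ up to a $Q$-capacity-null set, and the Cartan bound on $\Hq(E)$ upgrades to a $Q$-capacity bound on $E$ that becomes arbitrarily small as $R \to 0^+$; for $R$ small enough, $A_R \setminus E$ therefore retains positive $Q$-capacity and contains Lebesgue points of $u$.
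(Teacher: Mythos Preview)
Your overall strategy---using Lemma~\ref{Cartan} to isolate a bad set $E$ and then chaining via the Poincar\'e inequality---is a natural idea, but the anchoring step has a genuine gap that cannot be repaired as written. The claim that ``the Cartan bound on $\Hq(E)$ upgrades to a $Q$-capacity bound on $E$'' is false in general: take $Z=\R^2$, $Q=2$, and let $A$ be a line segment; then $\mathcal{H}^2_\infty(A)=0$ while $\mathrm{cap}_2(A)>0$, so small $\Hq$ does not force small $Q$-capacity. Worse, the set $E(\eta)$ can swallow all of $A_R$ for every $\eta$: if $\rho^Q(x)=|x_2|^{-\alpha}$ near $A$ with $0<\alpha<1$ (which is locally in $L^1$, so $\rho\in L^2_{\mathrm{loc}}$), then for every $z\in A_R$ and every small $r$ one has $\int_{B(z,r)}\rho^Q\simeq r^{2-\alpha}$, hence $r^{-2}\int_{B(z,r)}\rho^Q\to\infty$. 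Thus $A_R\subseteq E(\eta)$ for all $\eta$, and there is no point $z_0\in A_R\setminus E$ at which to anchor your chain. Since the continuum $A$ may have measure zero, the existence of Lebesgue points of $u$ inside $A_R$ is already delicate; combined with the failure of the $\Hq$-to-capacity upgrade, your route to the anchor does not go through.

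The paper's argument avoids this difficulty by reversing the direction: rather than anchoring at $A$ (which can be thin) and chaining outward, it works from the positive-measure set $M_v=\{u\ge\epsilon\}\cap B_v$, uses the Cartan bound to find $x\in M_v\setminus E$ (legitimate, since $\Hq(M_v)\gtrsim\mu(M_v)>0$), and then exploits the Loewner condition directly to build a rectifiable path from $x$ to $A$ with $\int_\gamma\rho\lesssim(\eta R^Q)^{1/Q}$. This contradicts $u(x)\ge\epsilon$ once $\delta=\mu(M_v)/\mu(B_v)$ is not too small, and the definition of $u$ as an infimum over path integrals is what makes the path construction decisive. No Lebesgue-point or capacity machinery on $A$ is needed. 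If you want to salvage a Poincar\'e-based approach, the clean route is a Maz'ya-type inequality for functions vanishing on a set of uniformly positive relative $Q$-capacity (here $A_R$ in $B(a,2R)$), which gives $\dashint_{B(a,R)}|u|\lesssim R\bigl(\dashint_{\lambda B(a,R)}\rho^Q\bigr)^{1/Q}\to 0$ directly; but that is a different argument from the one you sketched.
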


A rough outline of the proof is as follows: for a ball $B_v$ in the sequence we consider the set $M_v = \{u \geq \epsilon\} \cap B_v$.  For balls close enough to $A$, if $M_v$ is too large in $B_v$ we will use the Loewner condition to construct a path connecting $A$ to $M_v$ with short $\rho$-length.  When the $\rho$-length is less than $\epsilon$ this contradicts the definition of $u$.

\begin{proof}
Let $B_v  = B(z_0, R)$ be a ball in the sequence.  As our sequence approaches nontangentially, there is a constant $c_1 > 0$ depending only on our sequence such that $\dist(z_0, A) \leq c_1 R$.  We assume $B_v$ is close enough to $A$ that $ 4(1 + c_1) R < \diam(A)$.   Fix $\epsilon \in (0,1)$.  Set $M_v = \{u \geq \epsilon\} \cap B_v$.  We then note that by setting $\delta = \frac{\mu(M_v)}{\mu(B_v)}$ we have
\begin{equation}
\label{u delta ineq}
u_{B_v} \leq \frac{1}{\mu(B_v)} \bigl(\mu(M_v) + \epsilon\mu(B_v \setminus M_v) \bigr) = \delta + \epsilon(1-\delta).
\end{equation}
as $u \leq 1$.  We assume $\mu(M_v) > 0$ as our conclusion holds if $\mu(M_v) = 0$.

We now relate the measure of $M_v$ with its Hausdorff $Q$-content.  Indeed, if $B_i$ is a collection of balls covering $M_v$ with radii $s_i$, we have $\mu(M_v) \leq \sum_i \mu(B_i) \leq C_Q \sum_i s_i^Q$ and infimizing over all such collections yields $\frac{\mu(M_v)}{C_Q} \leq \Hq(M_v)$.

Now by applying Lemma \ref{Cartan} we see that the Hausdorff $Q$-content of the set 
\begin{equation*}
\begin{split}
E&=E(\eta, v)\\ &= \left \{z \in M_v: \text{there exists } r>0 \text{ such that } B(z,r) \subseteq c_0B_v \text{ and } \int_{B(z,r)} \rho^Q \geq \eta r^Q \right \}
\end{split}
\end{equation*}
satisfies $\Hq(E) \leq \frac{10^Q}{\eta} \int_{c_0B_v} \rho^Q$.

If $\int_{c_0 B_v} \rho^Q = 0$ then we set $\eta = 1$ and otherwise we set 
\begin{equation}
\label{eta def}
\eta = \frac{2 C_Q 10^Q \int_{c_0B_v} \rho^Q}{\mu(M_v)}.
\end{equation}
Hence, 
\begin{equation*}
\Hq(E) \leq \frac{10^Q}{\eta} \int_{c_0B_v} \rho^Q = \frac{\mu(M_v)}{2 C_Q} < \frac{\mu(M_v)}{C_Q} \leq \Hq(M_v).
\end{equation*}
It follows from $\Hq(E) < \Hq(M_v)$ that there is an $x \in M_v \setminus E$ such that for all $r>0$ with $B(x,r) \subseteq c_0B_v$ we have $\int_{B(x,r)} \rho^Q < \eta r^Q$.

Recall $\dist(z_0, A) \leq c_1 R$ and so $\dist(x,A) \leq (1+c_1) R$.  We set $B_0 = B(x, c_2 R)$ and $B_j = 2^{-j} B_0$ where $c_2 = 2(1+c_1)$.  As $4(1 + c_1) R < \diam(A)$ there is a subcontinua $E_0 \subseteq A$ that satisfies $E_0 \subseteq B_0$ and $\diam(E_0) \geq \frac{c_1}{2} R$. Indeed, by possibly removing some of $E_0$ we may also assume $E_0 \subseteq B_0 \setminus B_1$.  As $Z$ is a complete and doubling metric measure space that supports a Poincar\`e inequality, $Z$ is quasiconvex (see \cite[Theorem 8.3.2]{HKST}) and hence rectifiably path connected. Thus, there is a rectifiable path $\beta$ connecting $x$ to $E_0$, say $\beta:[0,1] \to Z$ with $\beta(0) = x$ and $\beta(1) \in E_0$.  We define continua $E_j$ as follows: given $j >0$, let $t_j^-$ denote the first time after which $\beta$ does not return to $B_{2j+1}$ and let $t_j^+$ denote the first time $\beta$ leaves $B_{2j}$.  Then set $E_j = \beta([t_j^-, t_j^+])$.  We note that for each $j$ we have $E_j \subseteq B_{2j}$ and $\diam(E_j) \geq \frac{1}{10}\diam(B_{2j})$.  Hence, it follows that 
\begin{equation*}
\begin{split}
\Delta(E_j, E_{j+1}) &= \frac{\dist(E_j, E_{j+1})}{\min(\diam(E_j),\diam(E_{j+1}))} \\
&\leq \frac{\diam(B_{2j})}{\frac{1}{10} \diam(B_{2j+2})} \\
&\leq 10 \frac{2 (2^{-(2j)}c_2 R)}{2(2^{-(2j+2)}c_2R)} \\
&= 40.
\end{split}
\end{equation*}
As we are in a $Q$-Loewner space, this means $\qmod(E_j,E_{j+1}) \geq \varphi(40)$, where $\varphi$ is the $Q$-Loewner function associated to $Z$.  As $Z$ is Ahlfors $Q$-regular, it follows from \cite[Proposition 5.3.9]{HKST} that there is a constant  $c_3 > 0$ such that 
\begin{equation*}
\qmod(\overline{B}(x_0,s), Z \setminus B(x_0, S)) \leq c_3 \biggl(\log \frac{S}{s} \biggr)^{1-Q}
\end{equation*}
when $0 < 2s < S$.  In particular, we can find a constant $c_4 > 2$ such that if $\Gamma^*(E_j, E_{j+1})$ is the path family of rectifiable paths connecting $E_j$ to $E_{j+1}$ that leaves $c_4 B_{2j}$, we have $\qmod(\Gamma^*(E_j, E_{j+1})) \leq \frac{\varphi(40)}{2}$.  
Thus, the $Q$-modulus of the family of rectifiable paths connecting $E_j$ to $E_{j+1}$ which stay inside $c_4 B_{2j}$ is at least $\frac{\varphi(40)}{2}$.  In particular, this means for each $j$ that one can find a path $\alpha_j$ that connects $E_j$ and $E_{j+1}$, stays inside $c_4B_{2j}$, and satisfies 
\begin{equation*}
\int_{\alpha_j} \rho \leq \biggl( \frac{4 \int_{c_4 B_{2j}} \rho^Q}{\varphi(40)} \biggr)^{1/Q}.
\end{equation*}
Here if $\int_{c_0 B_v} \rho^Q = 0$ then instead for each $\nu>0$ we can find $\alpha_j(\nu)$ such that $\int_{\alpha_j} \rho \leq \nu$ and the following argument works by choosing values of $\nu$ that are sufficiently small.
Recall we also use $\alpha_j$ to denote the image of $\alpha_j$.  Hence, each $\alpha_j$ is a continuum, 
\begin{equation*}
\dist(\alpha_j,\alpha_{j+1}) \leq \diam(E_{j+1}) \leq 2(2^{-(2j+2)})c_2 R,
\end{equation*}
and 
\begin{equation*}
\diam(\alpha_j) \geq \dist(E_j, E_{j+1}) \geq \frac{1}{4} \diam(B_{2j+1}) \geq \frac{1}{2} 2^{-(2j+1)} c_2 R.
\end{equation*}
Thus, we see
\begin{equation*}
\Delta(\alpha_j, \alpha_{j+1}) \leq \frac{2(2^{-(2j+2)})c_2 R}{\frac{1}{2} (2^{-(2j+3)}) c_2 R} = 8 < 40.
\end{equation*}
Hence we may perform the same procedure as above to find paths $\beta_j$ connecting $\alpha_j$ to $\alpha_{j+1}$ with 
\begin{equation*}
\int_{\beta_j} \rho \leq \biggl( \frac{4 \int_{c_4^2 B_{2j}} \rho^Q}{\varphi(40)} \biggr)^{1/Q}
\end{equation*}
where we note the $c_4^2$ arises as $\alpha_j \subseteq c_4 B_{2j}$.
From the way these paths were constructed it is clear we can extract a rectifiable path $\gamma$ from $\bigcup_j (\alpha_j \cup \beta_j)$ connecting $M_v$ to $A$ such that
\begin{equation*}
\int_\gamma \rho \leq  \sum_j \biggl( \frac{4 \int_{c_4 B_{2j}} \rho^Q}{\varphi(40)} \biggr)^{1/Q} + \biggl( \frac{4 \int_{c_4^2 B_{2j}} \rho^Q}{\varphi(40)} \biggr)^{1/Q}.
\end{equation*}
As $x \notin E(\eta)$, we know that $\int_{c B_{2j}} \rho^Q \leq \eta (c 2^{-(2j)} c_2 R)^Q$ for each $c >0 $ with $c B_0 \subseteq c_0 B_v$.  We note here that this is the requirement on $c_0$, namely that $B(x, c_4^2 c_2 R) \subseteq c_0 B_v$.  Hence, 
\begin{equation*}
\int_{\gamma}\rho \leq \sum_j \biggl(\frac{4}{\varphi(40)}  \eta (c_4 2^{-(2j)} c_2 R)^Q \biggr)^{1/Q} + \biggl(\frac{4}{\varphi(40)}  \eta (c_4^2 2^{-(2j)} c_2 R)^Q \biggr)^{1/Q} \lesssim \eta^{1/Q} R
\end{equation*}
with constant only depending on $c_4$ and $\varphi(40)$ (in the case that $\int_{c_0 B_v} \rho^Q = 0$ we can instead make $\int_{\gamma}\rho$ as small as we like). Recall the definition of $\eta$ given by (\ref{eta def}) which gives
\begin{equation*}
\int_{\gamma}\rho \lesssim \biggl( \int_{c_0 B_v} \rho^Q \biggr)^{1/Q} \biggl(\frac{1}{\mu(M_v)^{1/Q}}\biggr)R.
\end{equation*}
Now, $\delta = \frac{\mu(M_v)}{\mu(B_v)}$ and $\mu(B_v) \geq c_Q R^Q$, from which we conclude 
\begin{equation*}
\int_{\gamma} \rho \lesssim \biggl( \int_{c_0 B_v} \rho^Q \biggr)^{1/Q} \delta^{-1/Q}.
\end{equation*}
Now, as $x \in M_v$ and $\gamma$ is a path connecting $A$ to $x$, we must have $\int_{\gamma} \rho \geq \epsilon$.  Thus,
\begin{equation*}
\epsilon \delta ^{1/Q} \leq \biggl( \int_{c_0 B_v} \rho^Q \biggr)^{1/Q}.
\end{equation*}
For fixed $\epsilon>0$ the right hand side tends to $0$ as $v \to a \in A$. Thus, we must have $\delta \to 0$ as $v \to a$.  Hence, from inequality (\ref{u delta ineq}) we conclude $u_{B_v} \to 0$ as $v \to a$.
\end{proof}

The above argument can be adapted to show that as $B_v \to B$ we have $u_{B_v} \to 1$.  To do this, we would instead use $M_v = \{ u \leq 1 - \epsilon \} \cap B_v$ and argue as above that if $\delta = \frac{\mu(M_v)}{\mu(B_v)}$ was large then there would exist a path from $M$ to $B$ with short $\rho$-length.  From the definition of $u$ this would produce a path $\gamma$ connecting $A$ to $B$ with total length less than $1$, contradicting the admissibility of $\rho$.

This completes the proof of the continua case by bounding below the quantity in inequality (\ref{tau admissible telescope condition}).

Lastly we prove Theorem \ref{QS inv qcap}, the quasisymmetric invariance property for $\pcap$.

\begin{proof}[Proof of Theorem \ref{QS inv qcap}]
Recall $Z$ and $W$ are compact, connected, Ahlfors regular metric spaces and $\varphi: Z \to W$ is an $\eta$-quasisymmetry.  Let $X=(V_X, E_X)$ and $Y=(V_Y, E_Y)$ be corresponding hyperbolic fillings.  The proof for open sets and continua is the same, so let $A$ and $B$ either be open sets with $\dist(A,B) > 0$ or disjoint continua.  As $\varphi^{-1}$ is an $\eta'$-quasisymmetry with $\eta'$ depending on $\eta$, it suffices to show $\pcap(\varphi(A),\varphi(B)) \lesssim \pcap(A,B)$.  Let $G \colon Y \to X$ be the quasi-isometry induced by $\varphi^{-1}$ as in Lemma \ref{QI induces QS}.  We note that $G$ maps vertices to vertices.  Let $D>0$ be such that for all adjacent vertices $y,w \in Y$, $|G(y) - G(w)| \leq D$.   

Let $\tau \geq 0$ be admissible for $\pcap(A,B)$.  We construct $\sigma$ on $E_Y$ as follows: given an edge $e'\in E_Y$ with vertices $e'_+$ and $e'_-$, we set
\begin{equation*}
\sigma(e') = \sum_{|x - G(e'_+)| \leq D}\biggl( \sum_{e \sim x} \tau(e) \biggr) + \sum_{|x - G(e'_-)| \leq D} \biggl( \sum_{e \sim x} \tau(e) \biggr)
\end{equation*}	
where $e \sim x$ means $e$ is an edge that has the vertex $x$ as an endpoint.

We show that $\sigma$ is admissible for $\pcap(\varphi(A),\varphi(B))$.  Indeed, if $\gamma$ is a path in $Y$ with limits in $\varphi(A)$ and $\varphi(B)$, then we construct a path in $X$ with limits in $A$ and $B$ that serves as a suitable image of $\gamma$.  Each vertex $y \in \gamma$ corresponds to a point $G(y) \in X$.  By our choice of $D$, if two vertices $y$ and $y'$ are connected by an edge in $\gamma$, then $|G(y) - G(y')| \leq D$.  We choose a path connecting $G(y)$ and $G(y')$ that stays in the ball of radius $D$ centered at $G(y)$.  By doing this for all connected vertices in $\gamma$, we produce a path $\gamma_X$ in $X$ with limits in $A$ and $B$.  Now, by construction,
\begin{equation*}
\sum_{e' \in \gamma} \sigma(e') \geq \sum_{y \in \gamma} \biggl( \sum_{|x - G(y)| \leq D} \biggl( \sum_{e \sim x} \tau(e) \biggr) \biggr) \geq \sum_{e \in \gamma_X} \tau(e) \geq 1,
\end{equation*}
where we have viewed $\gamma$ as both a sequence of vertices and a sequence of edges.  The last inequality follows as $\tau$ was assumed admissible for $\pcap(A,B)$.

It remains to show $\norm{\sigma}_{p,\infty} \lesssim \norm{\tau}_{p,\infty}$.  For this we use Lemma \ref{BoS Lemma}.  Our set $J \subseteq E_Y \times E_X$ consists of pairs $(e',e)$ for which $e$ appears as a summand in the definition of $\sigma(e')$.  Following the notation from Lemma \ref{BoS Lemma}, for $e' \in E_Y$ the set $J_{e'}$ is the set of edges $e$ that appear as a summand in the definition of $\sigma(e')$.  The cardinality $|J_{e'}|$ is bounded independent of $e'$ as $X$ has bounded valence.  Similarly, for $e \in E_X$ the set $J^e$ is the set of $e'$ for which $e$ contributes to the sum in the definition of $\sigma(e')$.  We show that a given $e$ can only contribute to a bounded number of such $\sigma(e')$.  Indeed, if $(e',e) \in J$, then one of the vertices of $e$ must lie in the $D$ radius ball around the image under $G$ of one of the vertices of $e'$.  As $G$ is a quasi-isometry, we see there is a constant $C > 0$ such that if $y, y' \in V_Y$ and $|y-y'| > C$, then $|G(y) - G(y')| > 2D + 1$.  Hence, for edges $e''$ far enough away from $e'$ in $Y$, we must have $e'' \notin J^e$.  As $Y$ has bounded valence by Lemma \ref{bounded valence}, we see $|J^e|$ is bounded independent of $e$.

We set $s_{e'}=\sigma(e')$ and $t_e=\tau(e)$.  Inequality (\ref{BoS Lemma sum condition}) in Lemma \ref{BoS Lemma} then follows from the definition of $\sigma$.  Hence, we conclude $\norm{\sigma}_{p,\infty} \lesssim \norm{\tau}_{p,\infty}$.
\end{proof}

We now show the positivity of $\pcap$: whenever $A, B \subseteq Z$ are open sets with $\dist(A,B) > 0$, we have $\pcap(A,B) > 0$.  Here only $p \geq 1$ is assumed.  Recall we work with a fixed hyperbolic filling $X = (V,E)$ with parameter $s > 1$.  To prove this result, Proposition \ref{positivity pcap}, we first detail a construction.  For this we need the following lemma. 

\begin{lemma}
\label{Vertex splitting lemma}
There exists a constant $M>0$ such that whenever $v \in V$ is a vertex in $X$, there exist two vertices $v_1, v_2$ with levels $\ell(v_j) = \ell(v) + M$, $2 B_{v_1} \cap 2 B_{v_2} = \emptyset$, and $B_{v_1}, B_{v_2} \subseteq B_v$.
\end{lemma}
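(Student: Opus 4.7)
The plan is to exploit connectedness of $Z$ to locate, inside each ball $B_v$, two points at a controlled distance from one another, and then to use the near-maximality of the separated set $P_{\ell(v)+M}$ to select vertices on level $\ell(v)+M$ whose centers lie close to these two points. For $M$ large enough, depending only on $s$ and $\diam(Z)$, the corresponding balls $B_{v_j}$ will both be contained in $B_v$ while their doubles $2B_{v_j}$ remain disjoint.

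Write $\ell = \ell(v)$ and let $p = p_v \in P_\ell$ be the center of $B_v = B(p, 2s^{-\ell})$. Since $Z$ is connected, the map $z \mapsto d(p,z)$ carries $Z$ continuously onto a closed interval starting at $0$, and the triangle inequality forces its maximum to be at least $\diam(Z)/2$. Setting $r = \min(s^{-\ell}, \diam(Z)/2)$, I therefore find $q \in Z$ with $d(p,q) = r$, and $q \in B_v$ because $r < 2 s^{-\ell}$. By the maximality of the $s^{-(\ell+M)}$-separated set $P_{\ell+M}$, there exist $p_1, p_2 \in P_{\ell+M}$ with $d(p, p_1) \leq s^{-(\ell+M)}$ and $d(q, p_2) \leq s^{-(\ell+M)}$. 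Let $v_1, v_2$ be the associated vertices, so $\ell(v_j) = \ell + M$.

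It remains to verify the three conditions for a uniform choice of $M$. The inclusions $B_{v_j} \subseteq B_v$ reduce, after the triangle inequality, to an estimate of the form $3 s^{-M} \leq 1$. For the disjointness, the triangle inequality gives
\begin{equation*}
d(p_1, p_2) \;\geq\; d(p, q) - d(p, p_1) - d(q, p_2) \;\geq\; r - 2 s^{-(\ell+M)},
\end{equation*}
so $2 B_{v_1} \cap 2 B_{v_2} = \emptyset$ follows once $r > 10 s^{-(\ell+M)}$. In the regime $r = s^{-\ell}$ this amounts to $s^M > 10$, while in the regime $r = \diam(Z)/2$ it becomes $s^{\ell+M} > 20/\diam(Z)$, which holds uniformly in $\ell \geq 0$ as soon as $M > \log_s(20/\diam(Z))$. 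Any $M$ exceeding both thresholds works, yielding a constant depending only on $s$ and $\diam(Z)$. The main subtlety is arranging a single $M$ that covers all levels at once; the truncation $r \leq \diam(Z)/2$ handles low-level vertices (where $B_v$ is already all of $Z$) in unison with the generic case, preventing the required $M$ from being inflated by small-$\ell$ behaviour.
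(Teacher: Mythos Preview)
Your proof is correct, and it follows a genuinely different route from the paper's. The paper locates the two separated points by a measure argument: using Ahlfors $Q$-regularity it fixes a small $k>0$ with $(3/4)^Q/k^Q > C/c$, so that the annulus $B(z_v,(3/4)r_v)\setminus B(z_v,kr_v)$ has positive $\mu$-measure and hence contains a point $z_1$; the second point is simply $z_2=z_v$. Then $M$ is chosen with $s^{-M}<k/16$ and one picks covering vertices on level $\ell(v)+M$ through $z_1$ and $z_2$. By contrast, you use only the connectedness of $Z$: the intermediate value theorem for $z\mapsto d(p,z)$ produces $q$ at an exact prescribed distance $r=\min(s^{-\ell},\diam(Z)/2)$ from the center, and the rest is bookkeeping with the maximality of $P_{\ell+M}$. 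Your argument is strictly more elementary---it never touches the measure---and the truncation at $\diam(Z)/2$ handles the small-$\ell$ regime (where $B_v$ already equals $Z$) uniformly, a point the paper's proof leaves implicit. The paper's approach, on the other hand, stays within the quantitative measure-theoretic framework used throughout and avoids introducing the separate case analysis on $r$.
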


\begin{proof}
We write $B_v = B(z_v, r_v)$.  We recall $Z$ is Ahlfors $Q$-regular, so there exist constants $c, C > 0$ such that for all $z \in Z$ and $r \in (0, \diam(Z))$, we have $c r^Q \leq \mu (B(z, r)) \leq C r^Q$.  Fix small $k > 0$ such that $(\frac{3/4}{k})^Q > \frac{C}{c}$.  Then 
\begin{equation*}
\mu \left(B(z_v, (3/4) r_v) \setminus B(z_v, k r_v) \right) \geq c (3/4)^Q {r_v}^Q - C k^Q {r_v}^Q
\end{equation*}
and by our choice of $k$ we see 
\begin{equation*}
c (3/4)^Q {r_v}^Q - C k^Q {r_v}^Q > 0.
\end{equation*}
Hence, there is a point $z_1 \in B(z_v, (3/4) r_v) \setminus B(z_v, k r_v)$.  There is also a point $z_2 \in B(z_v, \frac{k}{2} r_v)$.  For example, let $z_2 = z_v$.

Let $M$ be such that $s^{-M} < \frac{k}{16}$, where we recall $s>1$ was a parameter in the construction of the hyperbolic filling.  Then as the balls of level $\ell(v) + M$ cover $Z$, there must be balls $B_j$ of radius $2 s^{-(\ell(v) + M)}$ with $z_j \in B_j$.  Now, $\dist(z_1, z_2) \geq \frac{k}{2} r_v$ by construction.  As $r_v = 2 s^{-\ell(v)}$, this is $\dist(z_1, z_2) \geq k s^{-\ell(v)}$.   Hence, the sum of the diameters of the balls $2 B_j$ is bounded by 
\begin{equation*}
16 s^{-(\ell(v) + M)} < 16 s^{-(\ell(v))} \frac{k}{16} = ks^{-(\ell(v))} \leq \dist(z_1, z_2)
\end{equation*}
and so $2B_1 \cap 2B_2 = \emptyset$.
\end{proof}

We now construct our path structure.  Let $v \in V$.  Let $G = \{0, 1\}^*$ be the set of finite sequences of elements of $\{0,1\}$.  For an element $g \in G$, we let $g0$ and $g1$ denote the concatenations of the symbols $0$ and $1$ to the right hand side of $g$.  We associate elements of $G$ to vertices in $V$ inductively as follows: let $\emptyset$ correspond to $v$ and, given an element $g \in G$ with corresponding vertex $v_g$, apply Lemma \ref{Vertex splitting lemma} to $B_{v_g}$ to obtain $B_{v_{g0}}$ and $B_{v_{g1}}$ which correspond to $g0$ and $g1$.  We also choose $\gamma_{g0}$ (and likewise $\gamma_{g1}$) to be an edge path connecting $v_g$ to $v_{g0}$ with length $M$.  To construct such a path, one can choose a point in $B_{v_{g0}}$ and select vertices corresponding to balls containing that point with levels between those of $v_g$ and $v_{g0}$.  To form $\gamma_{g0}$ one then uses the edges between these vertices.

\begin{remark}
\label{2B remark}
We note from the fact that $2B_1 \cap 2B_2 = \emptyset$ in Lemma \ref{Vertex splitting lemma} that,  if $g, h \in G^N$, then $\gamma_g \cap \gamma_h \neq \emptyset$ can only happen if the first $N-1$ entries of $g$ match those of $h$.
\end{remark}

\begin{defn}
Given a vertex $v \in V$, we call a path structure as constructed above a binary path structure and denote it $T_{v}$.
\end{defn}

We call an edge path $\gamma = (e_k)$ ascending if the levels of the endpoints of successive edges is strictly increasing.  That is, if $(v_k)$ is the sequence of vertices that $\gamma$ travels through, then $\ell(v_{k+1}) = \ell(v_k) + 1$ for all $k$.  We consider functions $\tau: E \to [0, \infty]$ which are admissible on ascending edge paths originating from $v$.  That is, for such paths $\gamma$ we require $\sum_\gamma \tau(e_k) \geq 1$.  We claim such functions cannot have too small weak $\ell^p$-norm.

\begin{lemma}
Let $\tau: E \to [0, \infty]$ and $v \in V$.  Then, there is a constant $S(p) < \infty$ and an ascending edge path $\gamma = (e_k)_{k=0}^\infty$ from $v$ to $\overline{B_v} \subseteq Z$ with $\tau$-length bounded above by $\norm{\tau}_{p,\infty} S(p)$.
\end{lemma}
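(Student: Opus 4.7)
The plan is to apply the probabilistic method to the binary path structure $T_v$ just constructed. I would select an infinite branch of $T_v$ by independent fair coin flips at each node; such a branch $(g_1, g_1g_2, g_1g_2g_3, \ldots)$ concatenates into an infinite ascending edge path $\gamma_b = \gamma_{g_1} \cdot \gamma_{g_1g_2} \cdot \gamma_{g_1g_2g_3} \cdots$ starting from $v$, and since the balls $B_{v_{g|_N}}$ are nested inside $B_v$ with radii $2s^{-(\ell(v)+NM)} \to 0$, the centers converge to a single point of $\overline{B_v}$. The goal is to show that the expected $\tau$-length of this random path is at most $\norm{\tau}_{p,\infty}\,S(p)$, which forces some branch to realize this bound.

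Setting $C = \norm{\tau}_{p,\infty}$ and $a_g = \sum_{e \in \gamma_g} \tau(e)$, interchanging sums gives
\begin{equation*}
\mathbb{E}\Bigl[\sum_{e \in \gamma_b} \tau(e)\Bigr] \;=\; \sum_{g \neq \emptyset} 2^{-|g|}\, a_g \;=\; \sum_{e} \tau(e) \sum_{g \colon e \in \gamma_g} 2^{-|g|}.
\end{equation*}
The crucial structural input is Remark~\ref{2B remark}: the level of any edge in $\gamma_g$ pins down $|g|$ uniquely, say at depth $N(e)$, and at that single depth at most two values of $g$ can contain $e$. Thus $\sum_{g \colon e \in \gamma_g} 2^{-|g|} \leq 2 \cdot 2^{-N(e)}$, and grouping by depth yields
\begin{equation*}
\mathbb{E}\Bigl[\sum_{e \in \gamma_b} \tau(e)\Bigr] \;\leq\; 2 \sum_{N=1}^{\infty} 2^{-N} \sum_{e \in F_N} \tau(e),
\end{equation*}
where $F_N := \bigcup_{|g|=N} \gamma_g$ has at most $2^N M$ elements by construction.

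The main analytic step is to control $\sum_{e \in F_N} \tau(e)$ using only the weak $\ell^p$ norm of $\tau$. The standard decreasing rearrangement bound $\tau^*(k) \leq Ck^{-1/p}$ implied by $\norm{\tau}_{p,\infty} = C$ gives
\begin{equation*}
\sum_{e \in F_N} \tau(e) \;\leq\; \sum_{k=1}^{|F_N|} C k^{-1/p} \;\lesssim\; C\, |F_N|^{1-1/p}
\end{equation*}
for $p > 1$ (with a harmless logarithmic factor when $p = 1$). Substituting $|F_N| \leq 2^N M$ converts the outer sum into a geometric series in $2^{-N/p}$, which converges for every $p \geq 1$ to a constant $S(p)$ depending only on $p$ and $M$, and the probabilistic method supplies the required branch. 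The main obstacle is keeping each edge's contribution counted with bounded multiplicity across the collection $\{\gamma_g\}$; this is exactly what the disjointness $2B_{v_1} \cap 2B_{v_2} = \emptyset$ in Lemma~\ref{Vertex splitting lemma} buys us via Remark~\ref{2B remark}, and without it the geometric-series structure driving the convergence would break down.
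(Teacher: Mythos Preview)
Your argument is correct and shares its core with the paper's: both average the $\tau$-length over the binary tree $T_v$ and control the result via the rearrangement bound $\tau^*(k) \leq \norm{\tau}_{p,\infty}\, k^{-1/p}$. The differences are in execution. The paper works at finite depth $N$, bounds the average over $G^N$, extracts for each $N$ a short path $\gamma'_{g_N}$, and then diagonalizes along a subsequence to assemble the infinite ascending path; in its rearrangement step it exploits the disjointness of the edge sets $F_k$ across depths (placing the largest $\tau$-values at the shallowest levels in the extremal configuration) to obtain a somewhat sharper constant. You instead pass directly to the expectation over the infinite coin-flip measure and invoke the probabilistic method in one stroke, which eliminates the diagonalization entirely; your per-depth bound $\sum_{e \in F_N} \tau(e) \lesssim \norm{\tau}_{p,\infty}\, |F_N|^{1-1/p}$ is cruder (it does not use disjointness across depths) but still sums geometrically in $2^{-N/p}$. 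Your route is more streamlined; the paper's yields a marginally tighter $S(p)$.
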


Functions that are admissible on ascending edge paths must give at least $\tau$-length 1 to such paths, so for these functions we have $\norm{\tau}_{p,\infty} \geq 1/S(p)$.

\begin{proof}
We may assume $\norm{\tau}_{p,\infty} < \infty$.  Let $T_v$ be a binary path structure originating from $v$.  For $N \in \N$, let $G^N = \{0,1\}^N$ be the set of finite strings of elements of $\{0,1\}$ of length $N$.  For $g \in G^N$ and $k<N$, let $g_k$ denote the element of $G^k$ which matches the first $k$ entries of $g$.  We also set $\gamma'_g = \bigcup_k \gamma_{g_k}$ to be the ascending edge path formed by concatenating $\gamma_{g_1},\dots,\gamma_{g}$.  The average $\tau$-length of the paths $\gamma'_g$, where $g \in G^N$, is given by
\begin{equation}
\label{path average}
\begin{split}
\frac{1}{2^N} \sum_{g \in G^N} \sum_{e \in \gamma'_g} \tau(e) &= \frac{1}{2^N} \sum_{g \in G^N} \sum_{k=1}^N  \biggl( \sum_{e \in \gamma_{g_k}} \tau(e) \biggr) \\
&= \frac{1}{2^N} \sum_{k=1}^N 2^{N-k} \sum_{h \in G^k} \biggl( \sum_{e \in \gamma_h}  \tau(e) \biggr).
\end{split}
\end{equation}
We bound this average above using $\norm{\tau}_{p,\infty}$.  For ease of notation, let $a = \norm{\tau}_{p,\infty}$.  By definition, we have
\begin{equation*}
\# \{e : \tau(e) > \lambda \} \leq \frac{a^p}{\lambda^p}.
\end{equation*}
Hence, for $j \in \N$ the function $\tau$ can take values $\tau(e) \geq \frac{a}{j^{(1/p)}}$ for at most $j$ edges $e$.  From Remark \ref{2B remark} for fixed $k > 1$ there are at least $(2^{(k-1)} - 1) M$ distinct edges contributing to the right hand side of equation (\ref{path average}) belonging to paths $\gamma_f$ with $f \in G^\ell$ where $\ell < k$.  With the weighting factors $2^{N-k}$ it follows that the average increases the more $\tau$-mass is located on paths with smaller associated $k$ values.  Using these observations we conclude that to bound equation (\ref{path average}) above, for $k>1$ and $h \in G^k$ we bound the sum $\sum_{e \in \gamma_h} \tau(e)$ above by 
\begin{equation*}
M \frac{a}{((2^{(k-1)} - 1)M)^{(1/p)}}.
\end{equation*}
From this, we obtain the upper bound
\begin{equation*}
\frac{1}{2} \sum_{h \in G^1} \biggl( \sum_{e \in \gamma_h}  \tau(e) \biggr)  + \sum_{k=2}^N 2^{-k} \sum_{h \in G^k} M \frac{a}{((2^{(k-1)} - 1)M)^{(1/p)}}.
\end{equation*}
We bound the first term by noting that $\tau(e) \leq a$ for all $e \in E$.  As $G^k$ has $2^k$ elements, our bound becomes
\begin{equation*}
Ma + \sum_{k=2}^N M \frac{a}{((2^{(k-1)} - 1)M)^{(1/p)}}.
\end{equation*}
Setting
\begin{equation*}
S(p) = M + \sum_{k=2}^\infty M \frac{1}{((2^{(k-1)} - 1)M)^{(1/p)}} < \infty
\end{equation*}
we thus bound (\ref{path average}) above by $a S(p) = \norm{\tau}_{p,\infty} S(p)$.  As this is a bound on the average $\tau$-length of the paths $\gamma'_g$, where $g \in G^N$, we conclude that for each $N$ there is a $g_N \in G^N$ such that $\gamma'_{g_N}$ has $\tau$-length bounded above by $\norm{\tau}_{p,\infty} S(p)$.

To complete the proof we construct a path $\gamma$ from the paths $\gamma'_{g_N}$.  For each $N$ we have $g_N \in \{0, 1\}^N$.  Hence, there is a subsequence of the $g_N$ that has the property that all strings in this subsequence have the same first element.  From this subsequence, we may extract another subsequence that consists of strings that all have the same first two elements.  Continuing in this manner and then diagonalizing produces an infinite sequence $h \in \{0,1\}^\N$ for which, for infinitely many $k \in \N$, the first $k$ elements of $h$ match $g_k$.  It is clear how to associate $h$ to an ascending path $\gamma$ which, from the bounds on the $\tau$-lengths of the paths $\gamma'_{g_k}$, satisfies the conclusion of the lemma.
\end{proof}

We perform a similar analysis on a path of edges of length $L$.  As above, if $\norm{\tau}_{p,\infty} = b$, then $\tau$ can take values $\tau(e) \geq \frac{b}{j^{(1/p)}}$ at most $j$ times.  Thus, the maximum $\tau$-length our line can have is $\sum_{k=1}^L \frac{b} {k^{(1/p)}}$.  This is bounded above (for $p > 1$) by $b(\int_0^L \frac{1}{x^{(1/p)}} dx)$, which is $\frac{b L^{1 - (1/p)}}{1 - (1/p)}$.  For $p=1$ this is bounded above by $b(1+\log(L))$.

We are now ready to prove positivity.

\begin{prop}
\label{positivity pcap}
Let $p \geq 1$ and let $A,B \subseteq Z$ be open sets with $\dist(A,B) > 0$.  Then, $\pcap(A,B) > 0$.  
\end{prop}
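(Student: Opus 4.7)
The plan is to show that every admissible $\tau$ satisfies $\norm{\tau}_{p,\infty} \geq c$ for a positive constant $c$ depending only on $A$, $B$, and $p$, from which $\pcap(A,B) > 0$ is immediate. The strategy is to build a single infinite edge path $\gamma$ in $X$ with nontangential limits in both $A$ and $B$ whose $\tau$-length is controlled by $\norm{\tau}_{p,\infty}$ times a constant, so that admissibility forces $\norm{\tau}_{p,\infty}$ to be bounded below.

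First I would select vertices $v_A, v_B \in V$ with $\overline{B_{v_A}} \subseteq A$ and $\overline{B_{v_B}} \subseteq B$. Such vertices exist: given $a \in A$ with $\overline{B(a,r)} \subseteq A$, pick any level $n$ with $4 s^{-n} < r$, and use maximality of the $s^{-n}$-separated set $P_n$ to find $p \in P_n$ with $|p - a| \leq s^{-n}$; then the vertex with ball $B(p, 2s^{-n})$ works. The same construction produces $v_B$.

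Next I would apply the ascending-path lemma (stated right before the proposition) at $v_A$ and at $v_B$, producing ascending edge paths $\alpha_A$ and $\alpha_B$ emanating from these vertices, each with $\tau$-length at most $\norm{\tau}_{p,\infty} S(p)$ and with limits in $\overline{B_{v_A}}\subseteq A$ and $\overline{B_{v_B}}\subseteq B$ respectively. These limits are nontangential: along an ascending path the radii decay by a factor of $s^{-1}$ at each step, so for consecutive vertices $v_k, v_{k+1}$ one has $|p_k - p_{k+1}| \leq 2 s^{-\ell(v_k)}(1 + s^{-1})$, and summing the geometric series yields $|p_k - z| \leq \frac{2(1+s^{-1})}{1 - s^{-1}} s^{-\ell(v_k)} \lesssim r(B_{v_k})$, which is the required nontangentiality bound.

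To stitch the two ascending paths together I would join $v_A$ to $v_B$ by a finite edge path through the root $O$; such a path exists since every level-$n$ vertex is adjacent to some level-$(n-1)$ vertex by maximality of $P_{n-1}$, so one can ascend in the graph from $v_A$ to $O$ and then descend to $v_B$ in $L \leq \ell(v_A) + \ell(v_B)$ steps. By the length-$L$ weak-norm bound stated immediately after the ascending-path lemma, the $\tau$-length of this connector is at most $\norm{\tau}_{p,\infty} \cdot T_p(L)$ where $T_p(L) = (1-1/p)^{-1} L^{1-1/p}$ for $p>1$ and $T_1(L) = 1 + \log L$. Concatenating the reverse of $\alpha_A$, this finite connector, and $\alpha_B$ yields an infinite path $\gamma$ in $X$ with nontangential limits in $A$ and $B$, whose total $\tau$-length is at most $\norm{\tau}_{p,\infty}(2 S(p) + T_p(L))$. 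Admissibility of $\tau$ then gives
\begin{equation*}
1 \leq \sum_{e \in \gamma} \tau(e) \leq \norm{\tau}_{p,\infty}\bigl(2 S(p) + T_p(L)\bigr),
\end{equation*}
so $\pcap(A,B) \geq (2S(p) + T_p(L))^{-p} > 0$. The only nontrivial point is the nontangential-limit verification for ascending paths, which reduces to the geometric-series estimate above; everything else is bookkeeping on top of the two length bounds already established.
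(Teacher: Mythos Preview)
Your argument is correct and follows essentially the same route as the paper's proof: pick vertices $v_A,v_B$ with closures of their balls inside $A,B$, use the ascending-path lemma to produce short one-sided paths with $\tau$-length at most $\norm{\tau}_{p,\infty}S(p)$, connect $v_A$ to $v_B$ by a finite edge path of length $L$ whose $\tau$-length is bounded via the weak-$\ell^p$ estimate $\norm{\tau}_{p,\infty}T_p(L)$, and conclude by admissibility. The paper phrases the last step using the restrictions $\tau|_{T_{v_A}},\tau|_{T_{v_B}},\tau|_\gamma$ rather than a single concatenated path, but this is only a cosmetic difference; your explicit nontangentiality check for ascending paths is a detail the paper leaves implicit.
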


\begin{proof}
We first assume $p > 1$.  Let $v, w$ be vertices such that $\overline{B_v} \subseteq A$ and $\overline{B_w} \subseteq B$ and such that $\ell(v) = \ell(w)$.  We connect $v$ and $w$ by an edge path $\gamma$ contained in $\{x \in X: |x-O| \leq \ell(v)\}$, where $O$ is the unique vertex with $\ell(O) = 0$.  We let $L$ denote the length of $\gamma$.  Recall $T_v$ and $T_w$ denote binary path structures originating from $v$ and $w$.  Now, if $\tau$ is admissible for $\pcap(A,B)$, then $\tau$ is admissible on paths contained in $T_v \cup T_w \cup \gamma$.  Let $\tau_A, \tau_B,\text{ and } \tau_\gamma$ denote the restrictions of $\tau$ to $T_v, T_w, \text{ and } \gamma$.  We then must have
\begin{equation*}
\norm{\tau_A}_{p,\infty} S(p) + \norm{\tau_B}_{p,\infty} S(p) + \norm{\tau_\gamma}_{p,\infty} \frac{L^{1-(1/p)}}{1-(1/p)} \geq 1.
\end{equation*}
As $\norm{\tau}_{p,\infty}$ is larger than the norm of each of these restrictions, we see
\begin{equation*}
\norm{\tau}_{p,\infty} S(p) + \norm{\tau}_{p,\infty} S(p) + \norm{\tau}_{p,\infty} \frac{L^{1-(1/p)}}{1-(1/p)} \geq 1.
\end{equation*}
Hence,
\begin{equation*}
\norm{\tau}_{p,\infty} \geq \frac{1}{2S(p) + \frac{L^{1-(1/p)}}{1-(1/p)}}.
\end{equation*}
The above analysis also applies to the $p=1$ case with the appropriate bound modification.  In this case, this bound becomes
\begin{equation*}
\norm{\tau}_{1,\infty} \geq \frac{1}{2S(1) + (1+\log(L))}.
\end{equation*}
Both cases thus yield a lower bound on $\norm{\tau}_{p,\infty}$ for admissible $\tau$, as desired.
\end{proof}

\section{Ahlfors regular conformal dimension}
\label{Cdmin}

We define a critical exponent relating to $\wcap$ which is motivated by a critical exponent defined in \cite{BK}.  This is not the first attempt to define meaningful critical exponents using hyperbolic fillings.  For example, see \cite{CP}.

\begin{defn}
Let 
\begin{equation*}
Q_w = \inf\{p:\pcap(A,B) < \infty \text{ for all open } A \text{ and }B \text{ with } \dist(A,B) >0\}.
\end{equation*}
\end{defn}
We write $\ARCdim$ for the Ahlfors regular conformal dimension of $Z$.  That is, if 
\begin{equation*}
\mathscr{G} = \{\theta: \theta \text{ is a metric on } Z \text{ with } (Z,\theta) \sim_{qs} (Z,d)\},
\end{equation*}
then $\ARCdim = \inf  \dim_H (Z,\theta)$ where the infimum is taken over all $\theta \in \mathscr{G}$ such that $(Z,\theta)$ is Ahlfors regular.  Here $(Z,\theta) \sim_{qs} (Z,d)$ means that the identity map is a quasisymmetry.

\begin{lemma} 
We have $Q_w \leq \ARCdim$.  
\end{lemma}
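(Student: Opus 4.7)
The plan is to use the quasisymmetric invariance of $\pcap$ (Theorem \ref{QS inv qcap}) to transfer the problem to an Ahlfors $Q''$-regular metric with $Q'' < p$, and then construct an admissible function directly. Given $p > \ARCdim$, the definition of $\ARCdim$ as an infimum supplies a metric $\theta \in \mathscr{G}$ for which $(Z, \theta)$ is Ahlfors $Q''$-regular with $Q'' < p$. Since $d$ and $\theta$ induce the same topology on $Z$, any open $A, B$ with $\dist(A, B) > 0$ have disjoint compact closures, so $\delta := \dist_\theta(A, B) > 0$ as well. By Theorem \ref{QS inv qcap} applied to the identity map $(Z, d) \to (Z, \theta)$, it suffices to show $\pcap(A, B) < \infty$ when computed on the hyperbolic filling $Y$ of $(Z, \theta)$.

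On $Y$ I would fix $k_A$ with $s^{-k_A} \simeq \delta$ and set
\[
\tau(e) \;=\; C \min\bigl(1,\, s^{k_A - \ell(e)}\bigr), \qquad \ell(e) := \min\bigl(\ell(e_+),\, \ell(e_-)\bigr),
\]
with $C$ an absolute constant to be chosen. Ahlfors $Q''$-regularity and bounded valence (Lemma \ref{bounded valence}) yield $\#\{e : \tau(e) > \lambda\} \lesssim s^{Q'' k_A}(C/\lambda)^{Q''}$ for $\lambda \leq C$; because $p > Q''$, the product $\lambda^p \#\{\tau > \lambda\}$ is maximized at $\lambda = C$, giving the finite estimate $\norm{\tau}_{p,\infty}^p \lesssim C^p \delta^{-Q''}$.

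To verify admissibility I would split into two cases depending on the minimum level $k_0$ attained by an infinite nontangential path $\gamma$ between $A$ and $B$. If $k_0 \leq k_A$, the two edges of $\gamma$ incident to a minimum-level vertex satisfy $\ell(e) = k_0$ and each have $\tau$-mass $C$, so $\sum_\gamma \tau \geq 2C$. If $k_0 > k_A$, then all radii along $\gamma$ are bounded by $2 s^{-k_0} < 2\delta$; since consecutive centers $p_n$ of vertices of $\gamma$ satisfy $d_\theta(p_n, p_{n+1}) \leq r_n + r_{n+1} \lesssim s^{-\ell(e_n)}$ and the limits $a \in A$, $b \in B$ lie at $\theta$-distance at least $\delta$, summing these step bounds forces $\sum_{k \geq k_0} M_k s^{-k} \gtrsim \delta$, where $M_k$ counts edges of $\gamma$ with $\ell(e) = k$. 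Hence
\[
\sum_{e \in \gamma}\tau(e) \;\geq\; C s^{k_A} \sum_{k \geq k_0} M_k s^{-k} \;\gtrsim\; C s^{k_A}\delta \;\simeq\; C,
\]
and choosing $C$ large enough (independent of $A$ and $B$) forces $\sum_\gamma \tau \geq 1$ in both cases.

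These estimates give $\pcap(A, B) < \infty$ on the filling of $(Z, \theta)$, and Theorem \ref{QS inv qcap} transports this back to the filling of $(Z, d)$. Since $A, B$ were arbitrary, $p \geq Q_w$ for every $p > \ARCdim$, so letting $p \searrow \ARCdim$ completes the argument. The main obstacle I expect is Case B of the admissibility check: paths in the filling may oscillate freely above the minimum level, and arbitrary non-tangentiality constants allow the centers $p_n$ to backtrack substantially, so making the bound $\sum_k M_k s^{-k} \gtrsim \delta$ rigorous will require careful accounting of how consecutive centers and radii along $\gamma$ can drift relative to the limits $a$ and $b$.
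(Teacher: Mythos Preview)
Your approach is essentially the paper's: transfer via Theorem \ref{QS inv qcap} to an Ahlfors $Q''$-regular metric $\theta$ with $Q'' < p$, then use an edge weight proportional to $r(B_v)/\delta$. The paper avoids your case split by not capping the weight: it sets $f(v) = 4r(B_v)/\dist(A,B)$ and $\tau(e) = f(e_+) + f(e_-)$, so admissibility follows in one line from the chain estimate $\sum_k 2r(B_{v_k}) \geq \dist(B_{v_0}, B_{v_N})$ applied to any finite subpath whose terminal balls meet $A$ and $B$ and have radii small compared to $\delta$. Your Case B worry is thus not a genuine obstacle---the triangle inequality summed along the chain of overlapping balls already gives $\sum_e s^{-\ell(e)} \gtrsim \delta$ regardless of oscillation or the nontangentiality constant, which is exactly what you need.
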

\begin{proof}
From quasisymmetric invariance, it suffices to show for any $p > \ARCdim$ and any open sets $A$ and $B$ with $\dist(A,B) > 0$ we have $\wcap_p(A,B) < \infty$.  Fix such parameters and let $\theta$ be an Ahlfors regular metric on $Z$ such that $\dim_H (Z,\theta) \leq p$.  Let $X=(V,E)$ be a hyperbolic filling for $(Z,\theta)$ with parameter $s>1$ and consider $f \colon V \to \R$ defined by 
\begin{equation*}
f(v) = \frac{4r(B_v)}{\dist(A,B)}.
\end{equation*}
It follows from $\dim_H (Z,\theta) \leq p$ that the number of vertices on level $n$ is bounded above by $C s^{np}$ for some $C>0$ and so $f \in \ell^{p,\infty}(V)$.  Define $\tau \colon E \to \R$ by $\tau(e) = f(e_+) + f(e_-)$.  Hence, $\norm{\tau}_{p,\infty} \lesssim \norm{f}_{p,\infty} < \infty$ follows from Lemma \ref{BoS Lemma}.  Thus, we need only show $\tau$ is admissible.  

Now, if $\gamma$ is any finite chain of vertices, say $\{v_0, \dots, v_N\}$ (where $v_k$ is connected to $v_{k+1}$ for all $k$), then 
\begin{equation*}
\sum_\gamma(\tau(e)) \geq \sum_k f(v_k) \geq \frac{2\dist(B_{v_0}, B_{v_N})}{\dist(A,B)}.
\end{equation*}
Thus, as an infinite $\gamma$ with nontangential limits in $A$ and $B$ has a finite subpath $\gamma_0 = \{v_0, \dots, v_N\}$ with $B_{v_0} \cap A \neq \emptyset$ and $B_{v_N} \cap B \neq \emptyset$ and small enough radii so that $2\dist(B_{v_0}, B_{v_N}) \geq \dist(A,B)$, we have
\begin{equation*}
\sum_\gamma \tau(e) \geq \sum_{\gamma_0} \tau(e) \geq \frac{2\dist(B_{v_0}, B_{v_N})}{\dist(A,B)} \geq 1
\end{equation*}
so $\tau$ is admissible.  
\end{proof}

In view of this inequality, one question is how does $Q_w$  relate to  $\ARCdim$?  If $Q_w \neq \ARCdim$, then can one define a similar critical exponent that is $\ARCdim$?  The following result shows that for some metric spaces we do have equality.

\begin{lemma}
Let $(Z,d,\mu)$ be a compact, connected metric measure space that is Ahlfors $Q$-regular, $Q > 1$, and such that there exists $1 \leq p \leq Q$ and a family of paths $\Gamma$ with $\modp(\Gamma) > 0$.  Then there exist open balls $A$ and $B$ with $\dist(A,B) > 0$ such that for all $q < Q$ we have $\capq(A,B) = \infty$.
\end{lemma}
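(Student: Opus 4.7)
The plan is to use countable subadditivity of $p$-modulus to single out a pair of open balls $(A, B)$ with $\dist(A, B) > 0$ and $\modp(A, B) > 0$, and then to derive a contradiction from any hypothetical $\tau$ admissible for $\capq(A, B)$ with finite weak norm by converting it (via the $u_n$ construction from the proof of Theorem \ref{qmod < qcap}) into admissible functions for $\modp(A, B)$ whose $L^p$-norms vanish along a suitable subsequence.

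For the first step, enumerate a countable basis of open balls of $Z$ and take the countable collection of pairs with disjoint closures, hence positive distance. Every nonconstant rectifiable path in $\Gamma$ joins some such pair, so by countable subadditivity of $\modp$ and the hypothesis, some pair $(A, B)$ has $\modp(\Gamma_{A, B}) > 0$ for the subfamily $\Gamma_{A, B}$ of paths in $\Gamma$ joining $A$ and $B$, and hence $\modp(A, B) > 0$. H\"older's inequality on the finite-measure space $(Z, \mu)$ gives monotonicity of modulus in the exponent, so modulus between $A$ and $B$ remains positive for every exponent $\geq p$; I henceforth assume $p < Q$ (the principal case of the hypothesis).

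For the contradiction, fix $q < Q$ and an admissible $\tau$ for $\capq(A, B)$ with $\norm{\tau}_{q, \infty}$ finite.  The weak-norm inequality forces $\norm{\tau}_\infty \leq \norm{\tau}_{q, \infty}$.  Setting $f(v) = \sum_{e \sim v} \tau(e)$, bounded valence (Lemma \ref{bounded valence}) and Lemma \ref{BoS Lemma} give $\norm{f}_\infty \lesssim \norm{\tau}_{q, \infty}$ and $\norm{f}_{q, \infty} \simeq \norm{\tau}_{q, \infty}$.  Defining $u_n = \sum_{v \in V_n} \frac{f(v)}{r(B_v)} \chi_{2 B_v}$ as in Theorem \ref{qmod < qcap}, the chaining argument there (via Lemma \ref{short path lemma}) shows $2 u_n$ is admissible for $\modp(A_\lambda, B_\lambda)$ for every small $\lambda > 0$ and every sufficiently large $n$, where $A_\lambda$ and $B_\lambda$ are the $\lambda$-interior subsets.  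The Bonk-Saksman selection furnishes, for each $N$, some $n \in [N, 2N]$ with $\sum_{v \in V_n} f(v)^q \lesssim \norm{\tau}_{q, \infty}^q$.  Starting from the base bound $\norm{u_n}_p^p \lesssim s^{-n(Q-p)} \sum_{v \in V_n} f(v)^p$ (from Ahlfors regularity and bounded valence), a two-case estimate of $\sum_{V_n} f^p$ closes the argument: when $p \geq q$ the $\ell^\infty$-bound gives $\sum_{V_n} f^p \lesssim \norm{\tau}_{q, \infty}^p$ and hence $\norm{u_n}_p^p \lesssim s^{-n(Q-p)} \norm{\tau}_{q, \infty}^p \to 0$ using $p < Q$; when $p \leq q$ H\"older with $\#V_n \lesssim s^{nQ}$ yields $\sum_{V_n} f^p \lesssim \norm{\tau}_{q, \infty}^p s^{nQ(1-p/q)}$ and hence $\norm{u_n}_p^p \lesssim s^{np(1-Q/q)} \norm{\tau}_{q, \infty}^p \to 0$ using $q < Q$.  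Thus $\modp(A_\lambda, B_\lambda) = 0$ for every small $\lambda > 0$, and the Mazur-Fuglede step from the proof of Theorem \ref{qmod < qcap} upgrades this to $\modp(A, B) = 0$, contradicting the positivity secured in the first step.

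The main obstacle is ensuring $p < Q$ strictly in the first step: the $s^{-n(Q-p)}$ decay factor in the first subcase needs this.  H\"older monotonicity of modulus makes this automatic whenever the hypothesis supplies some $p < Q$, but the edge case $p = Q$ requires an auxiliary argument producing an exponent less than $Q$ at which modulus remains positive for some pair of open balls.
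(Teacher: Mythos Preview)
Your reduction to a pair of balls and the contradiction via the $u_n$ functions are the right ideas and parallel the paper's proof. The gap you flag at $p=Q$, however, is genuine in your formulation, and the fix you propose in the last paragraph---lowering the exponent below $Q$ while keeping positive modulus---points in the wrong direction: H\"older monotonicity lets you \emph{raise} the exponent, not lower it.

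The paper closes this gap, and simultaneously dispenses with your Bonk--Saksman selection, your two-case analysis, and your Mazur--Fuglede step, by running the entire contradiction at the exponent $Q$ rather than $p$. You yourself observe via H\"older that $\qmod(A,B)>0$; it is this inequality that should be contradicted. With $\tau\in\ell^{q,\infty}(E)$ for some $q<Q$ one has $f\in\ell^{q,\infty}(V)\subset\ell^Q(V)$, so the tails satisfy $\sum_{v\in V_n}f(v)^Q\to 0$ as $n\to\infty$ for \emph{every} $n$. Combined with $\|u_n\|_Q^Q\lesssim\sum_{V_n}f^Q$ (which is your own bound with $p$ replaced by $Q$) this gives $\|u_n\|_Q^Q\to 0$ directly. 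Since $2u_n$ is admissible for large $n$ for the family $\Gamma$ after it has been reduced to connect $A_\lambda$ to $B_\lambda$ (the paper does this reduction on $\Gamma$ itself rather than on the full connecting family), one concludes $\qmod(\Gamma)=0$, a contradiction.

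In short: once you pass to $Q$-modulus, the strict inclusion $\ell^{q,\infty}\subset\ell^Q$ does all the work, and the argument becomes uniform over $p\in[1,Q]$ with no cases.
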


In such metric spaces \cite[Proposition 4.1.8]{MT} shows $\ARCdim(Z,d) = Q$ and so we have $Q_w = \ARCdim$.

\begin{proof}
From \cite[Proposition 4.1.6 (vii)]{MT} it follows that $\qmod(\Gamma) > 0$.  By potentially taking subpaths, we may assume every path in $\Gamma$ has distinct end points (i.e. no paths in $\Gamma$ are loops).  By writing $\Gamma = \cup(\Gamma_m)$ where $\Gamma_m = \{\gamma \in \Gamma: \ell(\gamma) > \frac{1}{m}\}$, we may assume the paths in $\Gamma$ have lengths uniformly bounded from below (we need $Q>1$ for this, see \cite[Proposition 4.1.6 (iv)]{MT}).  By covering $Z$ with a finite number balls of small enough radius and writing $\Gamma$ as the union of paths connecting two disjoint balls with positive separation, we may assume $\Gamma$ connects two open balls $A$ and $B$ with $\dist(A,B)>0$.  Refining this slightly allows us to assume $\Gamma$ connects $A_\lambda$ and $B_\lambda$ for some $\lambda>0$ as in the proof of Theorem  \ref{qmod < qcap}.

Now suppose $\tau: E \to \R$ is admissible for $\capq(A,B)$ and satisfies $\norm{\tau}_{q,\infty} < \infty$, where $q < Q$.  Define $f: V \to \R$ by $f(v) = \sum_{e \sim v} \tau(e)$ and, as in the proof of Theorem \ref{qmod < qcap}, set \begin{equation*}
u_n = \sum_{v \in V_n} \frac{f(v)}{r(B_v)}\chi_{2B_v}.
\end{equation*}
Then, as before, for large enough $n$ the function $2u_n$ is admissible for $\Gamma$.  Our estimate for $\norm{u_n}_Q^Q$ is also the same as in inequality (\ref{u_n bound}):
\begin{equation*}
\begin{split}
\norm{u_n}_Q^Q \lesssim \sum_{v \in V_n} f(v)^Q.
\end{split}
\end{equation*}
As $f \in \ell^{q,\infty}(V)$, it follows that $f \in \ell^Q(V)$ and so $\sum_{v \in V_n} f(v)^Q \to 0$ as $n \to \infty$.  This shows $\qmod(\Gamma) = 0$, a contradiction.  Hence, no such admissible function exists and $\capq(A,B) = \infty$.
\end{proof}


\section{Weak covering capacity}
\label{Sec wcqcap}

Here we state and prove some basic properties and the main theorems involving $\wcpcap$.  Recall we work on a compact, connected, Ahlfors $Q$-regular metric measure space $(Z,d, \mu)$ with $Q > 1$ and with a fixed hyperbolic filling $X=(V_X, E_X)$ of $(Z,d,\mu)$ with scaling parameter $s > 1$.  We first prove that if $p \geq Q$, then $\wcpcap$ is supported on rectifiable paths.

\begin{lemma}
\label{wcpcap nonrectifiable}
Let $p \geq Q$ and let $\Gamma_\infty$ be the set of all infinite-length paths $\gamma \colon [0,1] \to Z$.   Then $\wcpcap(\Gamma_\infty) = 0$.
\end{lemma}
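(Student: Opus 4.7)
The strategy is to reduce the case $p \geq Q$ to the case $p = Q$ via Theorem~\ref{wcqcap = qmod} and then compare weak $\ell^p$-norms. First, I verify $\qmod(\Gamma_\infty) = 0$: the constant function $\rho_\epsilon \equiv \epsilon$ is admissible for the $Q$-modulus of $\Gamma_\infty$, since every $\gamma \in \Gamma_\infty$ has infinite length and hence $\int_\gamma \rho_\epsilon = +\infty \geq 1$, while $\int_Z \rho_\epsilon^Q = \epsilon^Q \mu(Z) \to 0$ as $\epsilon \to 0$. Theorem~\ref{wcqcap = qmod} then yields $\wcqcap(\Gamma_\infty) = 0$. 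Equivalently, for every $\delta > 0$ there exists $\tau_\delta \colon V \to [0, \infty]$ admissible for $\Gamma_\infty$ with $\norm{\tau_\delta}_{Q, \infty} \leq \delta$.

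The key observation is that admissibility is a $p$-independent condition: the requirement $\liminf_n \inf_P \ell_{\tau, P, S_n}(\gamma) \geq 1$ involves only the combinatorial sum of $\tau$ along projections and does not reference $p$. Hence the same $\tau_\delta$ constructed above is admissible for $\wcpcap(\Gamma_\infty)$ for every $p > 1$; only the value of the norm on $\tau_\delta$ changes with $p$.

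Finally, I claim $\norm{\tau}_{p, \infty} \leq \norm{\tau}_{Q, \infty}$ for any $\tau$ and any $p \geq Q$. Set $A := \norm{\tau}_{Q, \infty}$ and assume $A < \infty$. Letting $\lambda \to \norm{\tau}_\infty^{-}$ in the definition of the weak $Q$-norm gives $|\{\tau > \lambda\}| \geq 1$ there, which forces $\norm{\tau}_\infty \leq A$. Combining this with the distributional bound $|\{v : \tau(v) > \lambda\}| \leq A^Q/\lambda^Q$, I obtain for $p \geq Q$ and $\lambda \in (0, \norm{\tau}_\infty]$ the inequality
\[
\lambda^p |\{v : \tau(v) > \lambda\}| \leq \lambda^{p-Q} A^Q \leq \norm{\tau}_\infty^{p-Q} A^Q \leq A^p,
\]
while for $\lambda > \norm{\tau}_\infty$ the distribution vanishes. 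Taking the supremum in $\lambda$ proves the claim. Applying this to $\tau_\delta$ gives $\norm{\tau_\delta}_{p, \infty}^p \leq \delta^p$, so $\wcpcap(\Gamma_\infty) \leq \delta^p$; letting $\delta \to 0$ completes the proof. The main point is the weak-norm comparison, whose essential input is the elementary bound $\norm{\tau}_\infty \leq \norm{\tau}_{Q, \infty}$.
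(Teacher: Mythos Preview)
Your argument is circular within the paper's logical structure. You invoke Theorem~\ref{wcqcap = qmod} to deduce $\wcqcap(\Gamma_\infty)=0$, but the paper's proof of that theorem (specifically the direction $\wcqcap(\Gamma)\lesssim \qmod(\Gamma)$) first reduces to path families consisting of finite-length curves, and that reduction is carried out precisely by appealing to Lemma~\ref{wcpcap nonrectifiable}. Concretely, the proof of $\wcqcap\lesssim\qmod$ fixes $\gamma\in\Gamma$, passes to its arclength parameterization on $[0,\ell(\gamma)]$, and chooses $\epsilon=\tfrac{1}{\ell(\gamma)+1}$; none of this makes sense when $\ell(\gamma)=\infty$. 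So you cannot cite Theorem~\ref{wcqcap = qmod} here without first supplying an independent argument that the admissible $\tau$ built there handles nonrectifiable $\gamma$ as well---and that is essentially the content of the lemma you are trying to prove.

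The remaining ingredients in your proposal are fine: admissibility is indeed $p$-independent, and your inequality $\norm{\tau}_{p,\infty}\le\norm{\tau}_{Q,\infty}$ for $p\ge Q$ is correct (the key step $\norm{\tau}_\infty\le\norm{\tau}_{Q,\infty}$ is valid on a countable domain). The paper avoids the circularity by giving a direct proof: it exhibits the explicit family $\tau_\epsilon(v)=\epsilon\, r(B_v)$, checks admissibility for each $\gamma\in\Gamma_\infty$ by a partition argument (using that large lower Riemann sums force large projected $\tau_\epsilon$-length), and then verifies $\norm{\tau_\epsilon}_{p,\infty}\to 0$ using the Ahlfors $Q$-regularity bound on the number of level-$n$ vertices together with $p\ge Q$. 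If you want to rescue your route, you would need to redo the $\wcqcap\lesssim\qmod$ argument specifically for $\Gamma_\infty$ without assuming finite length---at which point you are essentially reproducing the paper's direct proof.
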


\begin{proof}
We show that for any $\epsilon > 0$, the functions $\tau_\epsilon(v) = r(B_v) \epsilon$ are admissible for $\Gamma_\infty$ and that $\norm{\tau_\epsilon}_{p,\infty} \to 0$ as $\epsilon \to 0$.  From this it follows that $\wcpcap(\Gamma_\infty) = 0$.  Fix $\epsilon > 0$.  Let $\gamma \in \Gamma_\infty$.  Let $t_0 < t_1 < \dots < t_m$ be a partition of $[0,1]$ such that the points $\gamma(t_k)$ are distinct and $\sum_k d(\gamma(t_{k-1}),\gamma(t_k)) >  $ 4/$\epsilon$. Set $\gamma_k = \gamma|_{[t_{k-1},t_k]}$. Let $N$ be such that $400s^{-N} \leq \min_k d(\gamma(t_{k-1}),\gamma(t_k))$ and $m 2 \epsilon s^{-N} < 1$.  Let $\{S_j\}$ be an expanding sequence of covers.  As $\{S_j\}$ is expanding, for large enough $j$ we see that the balls in $S_j$ all have radius bounded above by $2s^{-N}$. Thus, for such a $j$, if $P_k$ is any projection of $\gamma_k$ on $S_j$ with balls $\{B_i\}$ we have 
\begin{equation}
\label{P_k inequality}
\begin{split}
\ell_{\tau_\epsilon,P_k,S_j}(\gamma_k) &= \sum_i \tau_\epsilon(B_i) \\
&= \sum_i r(B_i) \epsilon \\
&\geq \frac{\epsilon}{2} d(\gamma(t_{k-1}),\gamma(t_k)).
\end{split}
\end{equation}
Now, let $P$ be any projection of $\gamma$ on $S_j$.  By adding the values $t_0, \dots, t_m$ to $P$ we obtain a partition $P'$ from $P$ and subpartitions $P_k$ of $P'$ consisting of the values between $t_{k-1}$ and $t_k$. It is clear that $P'$ has at most $m$ more intervals than $P$.  As $\tau(v) \leq 2 \epsilon s^{-N}$ for all $v \in S_j$, it follows that
\begin{equation}
\label{P' inequality}
\ell_{\tau_\epsilon,P',S_j}(\gamma) - \ell_{\tau_\epsilon,P,S_j}(\gamma) \leq m 2 \epsilon s^{-N}.
\end{equation}
We note $\ell_{\tau_\epsilon,P',S_j}(\gamma) = \sum_k \ell_{\tau_\epsilon,P_k,S_j}(\gamma_k)$.  Combining this with (\ref{P_k inequality}) and (\ref{P' inequality}) yields
\begin{equation*}
\begin{split}
\ell_{\tau_\epsilon,P,S_j}(\gamma) &\geq \frac{\epsilon}{2} \left( \sum_k  d(\gamma(t_{k-1}),\gamma(t_k)) \right) - m 2 \epsilon s^{-N} \\
&> 2  - m 2 \epsilon s^{-N} \\
&\geq 1.
\end{split}
\end{equation*}
As this holds for all large enough $j$, we conclude that $\tau_\epsilon$ is admissible for each $\gamma \in \Gamma$ and hence for $\Gamma_\infty$.

It remains to show $\norm{\tau_\epsilon}_{p,\infty} \to 0$ as $\epsilon \to 0$.  As our hyperbolic filling has bounded valence (Lemma \ref{bounded valence}), we see the number of vertices with level $n$ is comparable to $s^{nQ}$ up to a fixed multiplicative constant.  Thus,  for $\lambda = 2\epsilon s^{-n} \leq 1$ we have 
\begin{equation*}
\# \{v \in V : \tau(v) > \lambda\} \lesssim s^{nQ} \lesssim  \frac{\epsilon^Q}{\lambda^Q}  = \frac{\epsilon^Q \lambda^{p-Q}}{\lambda^p} \lesssim \frac{\epsilon^p}{\lambda^p} 
\end{equation*}
with implicit constants independent of $n$.  From this the limiting behavior of $\norm{\tau_\epsilon}_{p,\infty}$ follows.
\end{proof}
We remark here that the above result does not hold for $p < Q$.  Indeed, by quasisymmetric invariance (Theorem \ref{QS inv wcqcap}) there are spaces with path families  $\Gamma$ of nonrectifiable curves for which $\wcpcap(\Gamma) > 0$ holds for some $p$.

We also note if $\Gamma_1$ and $\Gamma_2$ are arbitrary path families, then 
\begin{equation*}
\wcqcap(\Gamma_1) \leq \wcqcap(\Gamma_1 \cup \Gamma_2) \leq \wcqcap(\Gamma_1) + \wcqcap(\Gamma_2).
\end{equation*}
This follows as if $\tau$ is admissible for $\Gamma_1 \cup \Gamma_2$, then $\tau$ is admissible for $\Gamma_1$ and if $\tau_1, \tau_2$ are admissible for $\Gamma_1, \Gamma_2$, then $\max\{\tau_1,\tau_2\}$ is admissible for $\Gamma_1 \cup \Gamma_2$.  With this observation and Lemma \ref{wcpcap nonrectifiable} it follows that for any path family $\Gamma$ one has $\wcqcap(\Gamma) = \wcqcap(\Gamma \setminus \Gamma_\infty)$.  Thus, we may assume in the following that all path families $\Gamma$ consist solely of paths with finite length.


Now we prove Theorems \ref{wcqcap = qmod} and \ref{QS inv wcqcap}.  We start with Theorem \ref{wcqcap = qmod}.  Recall we work in a compact, connected, Ahlfors $Q$-regular metric space $Z$ with hyperbolic filling $X=(V_X, E_X)$.  We also work with a fixed path family $\Gamma$ such that every $\gamma \in \Gamma$ has finite length.

\begin{proof}[Proof of Theorem \ref{wcqcap = qmod}]
We first prove $\wcqcap(\Gamma) \lesssim \qmod(\Gamma)$.   Let $\rho \colon Z \to [0,\infty]$ be an admissible function for the $Q$-modulus of $\Gamma$. 
As $Z$ is compact, we may assume $\rho$ is lower semicontinuous (this follows from the Vitali-Carath\'eodory theorem, see \cite[Section 4.2]{HKST}).  Define $\tau \colon V \to \R$ by 
\begin{equation*}
\tau(v) = r(B_v)\left ( \dashint_{B_v} 10 \rho\right)
\end{equation*}
for $v \in V$.  We show that $\tau$ is admissible for covering capacity.

Fix $\gamma \in \Gamma$.  We recall we assume $\gamma$ has finite length $\ell(\gamma) > 0$.  Set $I = [0, \ell(\gamma)]$; we work with partitions of $I$ and the arclength parameterization of $\gamma$ as in the remark in the introduction.  As $\rho$ is lower semicontinuous, there is a sequence of continuous functions $f_n \geq 0$ such that $f_n$ increases pointwise to $\rho$ (see \cite[Section 4.2]{HKST}).  By the monotone convergence theorem $\int_\gamma 10f_n $ increases to $\int_\gamma 10\rho$ and, as $\int_\gamma 10\rho \geq 10$, there is an $N$ such that for $n \geq N$, we have $\int_\gamma 10f_n \geq 7 $.  Set $f = 10 f_N$ and $M = \max_{z\in Z} f(z)$.

Let $\epsilon = \frac{1}{\ell(\gamma) + 1 }$.  As $Z$ is compact, $f$ is uniformly continuous. Hence there is a $\delta_1 > 0$ such that if $d(x,y) < \delta_1$, then $|f(x)-f(y)| < \epsilon$.   

We find a partition of $[0, \ell(\gamma)]$ given by $0 = x_0 < \dots < x_p = \ell(\gamma)$ with $x_{k+1} - x_k < \delta_1$ and find a $\delta_2 > 0$ such that for each $i$, every $x,y$ in the $\delta_2$ neighborhood of $\gamma_i = \gamma([x_{i-1},x_i])$ satisfies $|f(x)-f(y)| < \epsilon$.  The existence of this partition and of $\delta_2$ follow from the uniform continuity of $f$ on $Z$.   We also set $m_i$ to be the infimum of the values of $f$ on the $\delta_2$ neighborhood of $\gamma_i$.   We further partition each $[x_{i-1},x_i]$ as $x_{i-1} = y^i_0 < \dots < y^i_{q_i} = x_i$ such that $\ell(\gamma_i) - \sum_j d(\gamma(y^i_{j-1}),\gamma(y^i_j)) < \frac{\epsilon}{M p}$.  Set $\delta_3 = \frac{1}{10} \min_{i,j}d(\gamma(y^i_{j-1}),\gamma(y^i_j))$ which we may assume is positive by appropriately choosing $y^i_j$.


Set $\delta = \min(\frac{1}{3}\delta_1,\delta_2,\delta_3)$.

Let  $\mathscr{S} = \{S_n\}$ be an expanding sequence of covers.  Then, as $\mathscr{S}$ is expanding, for large enough $n$ it follows that $r(B_v) < \delta$ for all $v \in S_n$.  We work with one of these covers $S_n$ with large $n$ which we denote $S$.  Let $P \colon [0,\ell(\gamma)] \to V$ be a projection of $\gamma$ onto $S$ with $t_0, \dots, t_m$ partitioning $[0,\ell(\gamma)]$ and $v_1,\dots, v_m$ vertices such that $\gamma([t_{k-1}, t_k]) \subseteq B_{v_k}$.  


We have
\begin{equation*}
\int_\gamma f = \sum_i \int_{\gamma_i} f  
\end{equation*}
and we see 
\begin{equation*}
\left|\sum_i \left(\int_{\gamma_i} f - m_i \ell(\gamma_i)\right) \right| \leq \epsilon \sum_i \ell(\gamma_i) < 1
\end{equation*}
and so
\begin{equation*}
\sum_i m_i \ell(\gamma_i) \geq 6.
\end{equation*}

Now we group the $t_k$ into $T_1, \dots, T_p$ where $T_i = \{t_k : x_{i-1} \leq t_k \leq x_i \}$ and similarly write $K_i = \{k : t_k \in T_i\}$.  By our choice of $\delta$ above, we see for each $i$ that
\begin{equation*}
|m_i - f(\gamma(t_k))| < \epsilon
\end{equation*}
whenever $k \in K_i$.  Using $\ell(\gamma_i) - \sum_j d(\gamma(y^i_{j-1}),\gamma(y^i_j)) < \frac{\epsilon}{M p}$, we deduce
\begin{equation*}
\sum_i  m_i \left( \sum_j d(\gamma(y^i_{j-1}),\gamma(y^i_j)) \right) \geq 6 - \epsilon \geq 4.  
\end{equation*}
Now, as $\delta \leq \delta_3$ we may replace $\sum_j d(\gamma(y^i_{j-1}),\gamma(y^i_j))$ in the above sum with twice the sum of the radii of balls $B_v$ from our partition $P$ with corresponding intervals intersecting $\cup_j [y^i_{j-1},y^i_j]$.  That is, we have 
\begin{equation*}
\sum_i\sum_{k \in K_i} m_i 2 r (B_{v_k}) \geq 4.
\end{equation*}
Thus,
\begin{equation*}
\sum_i\sum_{k \in K_i} m_i r (B_{v_k}) \geq 2.
\end{equation*}
We note that for $k \in K_i$ we have
\begin{equation*}
\left( \dashint_{B_{v_k}} f \right) - m_i \geq 0.
\end{equation*}
From this we conclude 
\begin{equation*}
\sum_i\sum_{k \in K_i} \left( \dashint_{B_{v_k}} f \right) r (B_{v_k}) \geq \sum_i\sum_{k \in K_i} m_i r (B_{v_k}) \geq 2.
\end{equation*}
Lastly we deal with the overestimation possible from having $k \in K_i$ for more than one $i$.  This only happens if $t_k = x_i$ for some $i$, which happens at most $p+1$ times (recall our partition is $x_0,\dots,x_p$). We note that $f$ is bounded and that in an expanding sequence of covers we have $r(B_{v_k}) \to 0$ in the above sum.  Thus, if  $f \leq M$ and $r(B_{v_k}) \leq \nu(n)$, double counting such $k$ adds at most $(p+1) M \nu(n)$ to our estimate.  We conclude
\begin{equation*}
\sum_{k=1}^m \tau(v_k) \geq \sum_{k=1}^m \left( \dashint_{B_{v_k}} f \right) r (B_{v_k}) \geq 2 - (p+1) M \nu(n).
\end{equation*}
      From this we see that for large enough $n$ the $\tau$-length of any partition $P$ of $\gamma$ onto $S_n$ is at least 1.  That is, $\tau$ is admissible for $\gamma$ relative to $\mathscr{S}$.  As $\mathscr{S}$ was arbitrary, it follows that $\tau$ is admissible for $\gamma$.


As this holds for all $\gamma \in \Gamma$ we see $\tau$ is admissible for covering capacity.  It remains to show $\norm{\tau}_{Q,\infty}^Q \lesssim \norm{\rho}_Q^Q$ but this follows as in the proof of Theorem \ref{qcap < qmod} with $p=1$.  




We now prove the other direction, namely $\qmod \lesssim \wcqcap $.

Let $\tau \colon V \to \R$ be admissible for covering capacity.  Let 
\begin{equation*}
\sigma_n  = 2 \sum_{v \in V_n} \frac{\tau(v)}{r(B_v)}\chi_{2B_v}
\end{equation*}

As in the proof that $\qmod(A,B) \lesssim \qcap(A,B)$ for open sets, we note that there is a subsequence $\sigma_{n_i}$ with $\norm{\sigma_{n_i}}_Q^Q \lesssim  \norm{\tau}_{Q,\infty}^Q$.  Applying Mazur's Lemma to this subsequence, as in the proof of Theorem \ref{qmod < qcap}, we get convex combinations $\rho_k$ of $\sigma_{n_i}$ with $i \geq k$ and a limit function $\rho$ with $\rho_k \to \rho$ in $L^Q$.  Similarly to that proof, by applying Fuglede's Lemma we may pass to a subsequence and assume that for for all paths $\gamma$ except in a family $\Gamma_0$ of $Q$-modulus $0$ we have $\int_\gamma \rho_n \to \int_\gamma \rho$.  We note that $\norm{\rho}_Q^Q \lesssim \norm{\tau}_{Q,\infty}^Q$.

As the $Q$-modulus of $\Gamma_0$ is 0, there exists a function $\sigma \geq 0$ with $\int_Z \sigma^Q < \infty$ such that for $\gamma \in \Gamma_0$ we have $\int_\gamma \sigma = \infty$.  We claim that $\rho + c\sigma$ is admissible for modulus for any $c>0$.  For $\gamma \in \Gamma_0$, admissibility is clear, so suppose $\gamma \notin \Gamma_0$.  We see if $B_{v_1},\dots,B_{v_M}$ is a sequence of balls which $\gamma$ passes through and that $\ell(\gamma \cap 2B_{v_k}) \geq r(B_{v_k})$ for each $v_k$, then
\begin{equation*}
\begin{split}
\int_\gamma \sigma_n &= 2 \sum_{v \in V_n} \ell(\gamma \cap 2 B_v) \frac{\tau(v)}{r(B_v)} \\
&\geq 2 \sum_{k=1}^M r(B_{v_k}) \frac{\tau(v_k)}{r(B_{v_k})}.
\end{split}
\end{equation*}
Let $S_j = \{v \in V: j \leq \ell(v) \leq 2j \}$ be the set of all vertices with levels between $j$ and $2j$.  As $\tau$ is admissible, it is admissible for the expanding sequence of covers $\{S_j\}$ and hence our integral is bounded below by 1 for large enough $n$.  Thus, $\int_\gamma \rho = \lim_{n\to \infty} \int_\gamma \rho_n \geq 1$.  We conclude that $\rho + c \sigma$ is admissible for modulus and, as $\norm{\rho + c\sigma}_Q^Q \lesssim \norm{\rho}_Q^Q + c\norm{\sigma}_Q^Q$, this shows $\qmod(\Gamma) \lesssim \wcqcap(\Gamma)$.
\end{proof}

Lastly we prove Theorem \ref{QS inv wcqcap}.

\begin{proof}[Proof of Theorem \ref{QS inv wcqcap}]
Recall $Z$ and $W$ are compact, connected, Ahlfors regular metric spaces and $\varphi \colon Z \to W$ is an $\eta$-quasisymmetry.  Let $X = (V_X, E_X)$ and $Y = (V_Y, E_Y)$ by corresponding hyperbolic fillings.
Fix a path family $\Gamma$ in $Z$.  Let $\tau$ be admissible for $\wcpcap(\Gamma)$.  Let $G \colon Y \to X$ denote the quasi-isometry induced by $\varphi^{-1}$ from Lemma \ref{QI induces QS}.  We define $\sigma \colon V_Y \to [0,\infty]$ by $\sigma(y) = \tau(G(y))$.  

We claim $\sigma$ is admissible for $\wcpcap(\varphi(\Gamma))$.  To prove this, let $\mathscr{S}' = \{S_n'\}$ be an expanding sequence of covers in $Y$.  We note that if $\{v_k\}$ is the set of vertices in $S_n'$ then $W \subseteq \bigcup_k B_{v_k}$.  Recall that for a vertex $v \in V_Y$ we defined $G(v)$ such that $\varphi^{-1}(B_v) \subseteq B_{G(v)}$.  Hence,
\begin{equation*}
Z = \varphi^{-1}(W) \subseteq \cup_k \varphi^{-1}(B_{v_k}) \subseteq \cup_k B_{G(v_k)}
\end{equation*}
and we see that $\{S_n\} = \{G(S_n')\}$ is an expanding sequence of covers.

We fix a rectifiable $\gamma' = \varphi(\gamma) \in \varphi(\Gamma)$.  Now, let $P$ be a projection of $\gamma'$ onto $S_n'$, say with balls $B_{y_1},\dots,B_{y_m}$.  From the above, $\varphi^{-1}(B_{y_k}) \subseteq B_{G(y_k)}$ and so the sequence $B_{G(y_k)}$ forms a partition of $\gamma$ using balls in $S_n$. Hence, for large enough $n$, we see $\sum_k \tau(G(y_k)) \geq 1$.  As $\tau(G(y_k)) = \sigma(y_k)$, it follows that $\sigma$ is admissible for $\wcpcap(\varphi(\Gamma))$.

It remains to show $\norm {\sigma}_{p,\infty} \lesssim \norm {\tau}_{p,\infty}$ for which we use Lemma \ref{BoS Lemma}.  To apply Lemma \ref{BoS Lemma} we set $J \subseteq V_Y \times V_X$ where $(y,x) \in J$ if $x = G(y)$.  We see for $J^x = \{y: (y,x) \in J\}$ we have 
\begin{equation*}
|J^x| = |\{y : x = G(y) \}|. 
\end{equation*}
If $x = G(y) = G(y')$ then, as $G$ is a quasi-isometry, it follows that there is a there is a fixed $D'>0$ such that $|y - y'| \leq D'$.  From Lemma \ref{bounded valence} it follows there is a uniformly bounded number of such $y$; that is, $|J^x|$ is uniformly bounded.

Now, $J_y = \{x: (y,x) \in J\} = \{G(y)\}$ so $|J_y| = 1$.
  Lastly, we use $s_y = \sigma(y)$ and $t_x = \tau(x)$ for our sequences.  We have
\begin{equation*}
\sigma(y) = \tau(G(y)) = \sum_{x \in J_y} \tau(x)
\end{equation*}
as $G(y) \in J_y$.  Thus,  $\norm {\sigma}_{p,\infty} \lesssim \norm {\tau}_{p,\infty}$ and so $\wcpcap(\varphi(\Gamma)) \lesssim \wcpcap(\Gamma)$.  The other inequality follows from considering $\varphi^{-1}$ in place of $\varphi$.
\end{proof}

\end{document}